\newtheorem{thm}{Theorem}[section]
\newtheorem{theorem}[thm]{Theorem}
\newtheorem{cor}[thm]{Corollary}
\newtheorem{corollary}[thm]{Corollary}
\newtheorem{observation}[thm]{Observation}
\newtheorem*{theorem*}{Theorem}
\newtheorem*{lemma*}{Lemma}
\newtheorem{lemma}[thm]{Lemma}
\newtheorem{example}{Example}
\newtheorem{proposition}[thm]{Proposition}
\newtheorem{definition}[thm]{Definition}
\theoremstyle{remark}
\newtheorem{remark}[thm]{Remark}
\newcommand{\RR}{\mathbb R}
\newcommand{\HH}{\mathbb H}
\newcommand{\ip}[2]{\left\langle#1,#2\right\rangle}
\newcommand{\abs}[1]{\left| #1 \right|}
\newcommand{\vertiii}[1]{{\left\vert\kern-0.25ex\left\vert\kern-0.25ex\left\vert #1 
    \right\vert\kern-0.25ex\right\vert\kern-0.25ex\right\vert}}
\begin{document}

\nocite{*}

\title[Outer Product Frames]{Riesz Outer Product 
Hilbert space Frames:\\ Quantitative Bounds, Topological Properties,\\ and Full Geometric Characterization}

\author[Casazza, Pinkham, Tuomanen
 ]{Peter G. Casazza, Eric Pinkham and Brian Tuomanen}
\address{Department of Mathematics, University
of Missouri, Columbia, MO 65211-4100}

\thanks{The authors were supported by
NSF DMS 1307685; NSF ATD 1042701 and 1321779; AFOSR DGE51: FA9550-11-1-0245
}

\email{casazzap@missouri.edu, eap9qc@missouri.edu, bpt6gc@missouri.edu}

\begin{abstract}
Outer product frames are important objects in Hilbert space
frame theory.  But very little is known about them.
In this paper, we make the first detailed study of the family of outer product frames induced directly by vector sequences. We are interested in both the quantitative attributes of these outer product sequences (in particular, their Riesz and frame bounds), as well as their independence and spanning properties. We show that Riesz sequences of vectors yield Riesz sequences of outer products with the same (or better) Riesz bounds. Equiangular tight frames are shown to produce Riesz sequences with optimal Riesz bounds for outer products. We provide constructions of frames which produce Riesz outer product bases with ``good'' Riesz bounds. We show that the family of unit norm frames which yield independent outer product sequences is open and dense (in a Euclidean-analytic sense)  within the topological space  $ \otimes_{i=1}^M  S_{N - 1}$ where $M$ is less than or equal to the dimension of the space of symmetric operators on $\HH^N$;  that is to say, almost every frame with such a bound on its cardinality will induce a set of independent outer products.  Thus, this would mean that finding the necessary and sufficient conditions such that the induced outer products are dependent is a more interesting question. For the coup de gr\^{a}ce, we give a full analytic and geometric classification of such sequences which produce dependent outer products. 
\end{abstract}

\maketitle
\pagebreak
\tableofcontents
\pagebreak
\section{Introduction}
	In this paper we are concerned with two classes of sequences for finite dimensional Hilbert spaces; frames and Riesz sequences. The first has its origins in Harmonic analysis and was first introduced in 1952 by Duffin and Schaefer in \cite{DS}. Frames provide redundant representations for vectors in a Hilbert space. This inherent property allows for the representation of any element of a Hilbert space in infinitely many ways. This gives natural robustness to noise \cite{mallat_book} and erasures \cite{1402167}. Riesz sequences have been around even longer though perhaps not as thoroughly studied. A Riesz basis provides a basis for a Hilbert space with quantitative bounds on the norm of a vectors representation in terms of its coefficients. We will be relating these two classes of sequences through \emph{outer products}. Outer products  can be abstractly considered as tensors or in our case more frequently as rank one projections. Outer products have recently appeared in numerous papers (for instance, \cite{scalable_frames_convex_geometry,scalable_note}) regarding the \emph{scaling problem}. Here we give 
	the first  thorough study of frames and Riesz sequences of outer products.
    
We start by introducing some of the basic terminology used throughout this paper. Though most of the necessary material is provided here, we assume that the reader has a familiarity with the basics of frame theory. The reader may wish to review \cite{frames_for_undergraduates, MR2422239, MR1946982,tropp,casazza2012finite}.

We assume that all vectors are column vectors.

\begin{definition}
        A sequence of vectors $\{\phi_i\}_{i=1}^M\subset\HH^N$ is a \emph{frame} for $\HH^N$ provided there exists $0<A\leq B<\infty$ such that
        \[A\|\psi\|^2\leq \sum_{i=1}^M|\ip{\phi_i}{\psi}|^2\leq B\|\psi\|^2\]
        for all $\psi\in \HH^N$. $A$ and $B$ are called the \emph{lower} and \emph{upper} frame bouns respectively.
\end{definition}
In the finite dimensional setting, a frame is just a spanning set, see \cite{frames_for_undergraduates}. It should be noted, that there are many frame bounds for a given frame. The largest lower frame bound and the smallest upper frame bound are the \emph{optimal} frame bounds. We characterize several classes of frames of particular interest by their frame bounds. If $A=B$ the frame is said to be a \emph{tight frame}, and if $A=B=1$ it is a \emph{Parseval frame}. These classes are particularly useful for reasons we will see below.

There are several important operators which go along with the study of frames. For the most part we will not be needing these but for completeness we include them.

\begin{definition}
        Let $\Phi=\{\phi_i\}_{i=1}^M$ be a frame for $\HH^N$.
                \begin{enumerate}
                        \item The synthesis operator of 
                        $\Phi$ is 
                        \[T:\ell_2^M\to \HH^N \ \ \ \ T:(a_i)_{i=1}^M\mapsto \sum_{i=1}^M a_i\phi_i.\]
                        Its matrix representation is
                        \[T=\begin{bmatrix}
                        | & | & & | \\
                        \phi_1 & \phi_2 & \cdots & \phi_M\\ 
                        | & | & & |
                        \end{bmatrix}.\]
                
                        \item The \emph{analysis operator} of $\Phi$ is the Hermitian adjoint of $T$,
                        \[T^*:\HH^N\to \ell_2^M \ \ \ \ T^*:\psi \mapsto (\ip{\psi}{\phi_i})_{i=1}^M.\]
                        \item The \emph{frame operator} 
                        of $\Phi$ is $S=TT^*$ so that
                        \[S:\HH^N \to \HH^M \ \ \ \ S:\psi\mapsto \sum_{i=1}^M\ip{\psi}{\phi_i}\phi_i.\]
                        
                        \item The \emph{Gram matrix} of 
 $\Phi$ is \[ G(\Phi) = T^*T = [\langle \phi_i,\phi_j]
 _{i,j=1}^M.\]
                \end{enumerate}
                It follows that the non-zero eigenvalues of
                $S$ and $G(\Phi)$ are equal and so the largest smallest non-zero eigenvalues of $G(\Phi)$ are the lower and upper frame bounds of $\Phi$.
\end{definition} 
The frame operator exhibits great utility in understanding frame properties.
\begin{theorem}
        Let $\{\phi_i\}_{i=1}^M$ be a frame for $\HH^N$. Then the frame operator $S$ is self-adjoint, positive, and invertible. Furthermore, the largest and smallest eigenvalues of $S$ are precisely the optimal upper and lower frame bounds of $\{\phi_i\}_{i=1}^M$ respectively.
\end{theorem}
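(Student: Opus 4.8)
The plan is to prove the standard structure theorem for the frame operator $S = TT^*$ by treating each asserted property in turn, since self-adjointness, positivity, and invertibility all follow quickly once we connect $S$ to the defining frame inequality. First I would establish self-adjointness directly from the definition $S = TT^*$: for any operator between finite-dimensional Hilbert spaces, $(TT^*)^* = T^{**}T^* = TT^* = S$, so $S$ is automatically self-adjoint. This is the easiest step and requires no appeal to the frame condition.

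Next I would show positivity by computing the quadratic form $\ip{S\psi}{\psi}$ for $\psi \in \HH^N$. Using the definition $S\psi = \sum_{i=1}^M \ip{\psi}{\phi_i}\phi_i$ and expanding the inner product, I would obtain
\[
\ip{S\psi}{\psi} = \sum_{i=1}^M \ip{\psi}{\phi_i}\ip{\phi_i}{\psi} = \sum_{i=1}^M |\ip{\psi}{\phi_i}|^2.
\]
The lower frame bound then gives $\ip{S\psi}{\psi} \geq A\|\psi\|^2 > 0$ for all nonzero $\psi$, which simultaneously proves positivity and shows $S$ is injective. In finite dimensions injectivity forces invertibility, so this single computation delivers both the positivity and the invertibility claims at once.

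Finally, for the eigenvalue characterization I would invoke the spectral theorem: since $S$ is self-adjoint and positive, it has an orthonormal eigenbasis with positive real eigenvalues. The key observation is that the frame inequality can be rewritten as $A\|\psi\|^2 \leq \ip{S\psi}{\psi} \leq B\|\psi\|^2$, and by the Rayleigh–Ritz / min–max characterization the optimal (largest) such $A$ equals the smallest eigenvalue $\lambda_{\min}$ of $S$ while the optimal (smallest) such $B$ equals the largest eigenvalue $\lambda_{\max}$. I would make this precise by evaluating $\ip{S\psi}{\psi}/\|\psi\|^2$ on the eigenvectors to show the bounds are attained, confirming they are optimal and not merely valid.

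I expect no serious obstacle here, as this is a foundational result whose proof is essentially a sequence of routine verifications; the only point requiring mild care is the final optimality argument, where one must check that the extremal eigenvalues are genuinely achieved (via the corresponding eigenvectors) rather than merely serving as bounds. Everything rests on the identification $\ip{S\psi}{\psi} = \sum_{i=1}^M |\ip{\psi}{\phi_i}|^2$, which is the computational heart of the proof.
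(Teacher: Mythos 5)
Your proof is correct, and since the paper states this theorem as standard background without supplying a proof, there is nothing to diverge from: your argument is the standard one the paper implicitly relies on, resting on the identity $\ip{S\psi}{\psi}=\sum_{i=1}^M\absip{\psi}{\phi_i}^2$, which turns the frame inequality into $A\|\psi\|^2\leq\ip{S\psi}{\psi}\leq B\|\psi\|^2$ and hands positivity, injectivity (hence invertibility in finite dimensions), and the Rayleigh--Ritz identification of the optimal bounds with $\lambda_{\min}(S)$ and $\lambda_{\max}(S)$. Your care in checking that the extremal values are attained at eigenvectors is exactly what makes the bounds optimal rather than merely valid, so the proposal stands as written.
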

{\it Reconstruction} is carried out by
\[\psi=SS^{-1}\psi=\sum_{i=1}^M\ip{\psi}{\phi_i}S^{-1}\phi_i=\sum_{i=1}^M\ip{\psi}{S^{-1}\phi_i}\phi_i.\]
This provides useful representations of any vector in our Hilbert space through the frame operator. For applications, we want the frame operator to be as well conditioned as possible for stability of the representation. This means that frames which are close to being tight are more desirable than those with arbitrarily small lower frame bounds. Particularly useful frames for encoding and decoding as above are tight frames. Tight frames have the important property that their frame operator is a multiple of the identity and hence inverting them is trivial. This is especially useful when our space has very high dimension as is common in applications.

The second class of sequences we will be examining are \emph{Riesz sequences}. 
\begin{definition}
        A sequence of vectors $\{\phi_i\}_{i=1}^M\subset \HH^N$ is a \emph{Riesz sequence} provided there exists $0<A\leq B <\infty$ such that
        \[A\sum_{i=1}^M|a_i|^2\leq \left\|\sum_{i=1}^Ma_i\phi_i\right\|^2\leq B\sum_{i=1}^M|a_i|^2\]
        for all $(a_i)_{i=1}^M\in \HH^M$. $A$ and $B$ are called the \emph{lower} and \emph{upper} Riesz bounds respectively.
\end{definition}
Again when dealing with finite dimensional vector spaces, these objects have a very simple characterization: a set is Riesz if and only if it is linearly independent. We will use \emph{independent} and \emph{Riesz} nearly  interchangeably in this paper. We will use \emph{Riesz} when we are particularly concerned with not only the independence but also the Riesz bounds.

The final object we need to define before beginning our study of outer products of Riesz squences and frames is the \emph{outer product} of two vectors. 

\begin{definition}
        For $\phi, \psi\in \HH^N$, define the outer product of $\phi$ and $\psi$ by $\phi\psi^*$ in terms of standard matrix multiplication. 
        For any vector $\phi\in \HH^N$, we define the \emph{induced outer product of $\phi$} as $\phi \phi^*$.  Note that if $\phi$ is a unit norm vector, then this will be a rank one orthogonal projection.
\end{definition}
%
%

Much of the following work will be in the space of $N\times N$ matrices over the real or complex fields. We will denote these spaces as $\HH^{N\times N}$, and as needed clarifying the base field. In the case that we are restricting our attention to the symmetric or self-adjoint matrices we will use $\mathrm{sym}(\HH^{N\times N})$. To simplify notation, given $S\in \HH^{N\times N}$ we will use $S^*$ for both the Hermitian adjoint and transpose understanding that the underlying field determines which is at play.
\begin{remark}
The ambient space of outer products is the space of self-adjoint matrices on $\HH^N$. It has dimension $N(N+1)/2$ if $\HH$ is real. If $\HH$ is complex, the space of self-adjoint matrices does not form a \emph{complex} vector space but instead a \emph{real} vector space, as such it has dimension $N^2$. 
\end{remark}
For $\phi,\psi\in \HH^N$ we will denote the $i^{th}$ entry of $\phi$ by $\phi(i)$. For a matrix $S$ we will denote the $(i,j)^{th}$ entry by $S[i,j]$. 

We will equip these vector spaces with the Frobenius matrix inner product. 
\begin{definition}
        Let $S,T\in \HH^{N\times N}$. The \emph{Frobenius inner product} is 
        \[\ip{S}{T}_F=\mathrm{Tr}(S^*T)=\mathrm{Tr}(ST^*)=\sum_{i=1}^N\sum_{j=1}^N S[i,j]T[i,j].\]
        We may drop the subscript $F$ when no confusion
        will arise.
\end{definition}
For given $\phi,\psi\in \HH^N$ we will use the usual $\ell_2$ inner product
\[\ip{\phi}{\psi}=\sum_{i=1}^N\phi(i)\overline{\psi(i)}.\]

Throughout this paper we will use $I_N$ to be the $N\times N$ identity matrix and $1_N\in \HH^N$ to be the vector $1_N$ to be the vector of all $1$'s.

\section{Some Basic Calculations}

The primary goal of this paper is investigating the independence of outer products of sequences of vectors. 
We start with a simple calculation.

\begin{lemma}\label{pc2}
For any vectors $\phi_1,\phi_2\in \HH^N$ we have
\[ \langle \phi_1\phi_1^*,\phi_2\phi_2^*\rangle_F
= |\langle \phi_1,\phi_2\rangle|^2.\]

\end{lemma}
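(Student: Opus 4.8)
The plan is to compute both sides directly from the definitions of the Frobenius inner product and the outer product, and verify they agree. The key observation is that the outer products $\phi_1\phi_1^*$ and $\phi_2\phi_2^*$ are matrices, so we can apply the trace formula $\langle S,T\rangle_F = \tr(S^*T)$ with $S = \phi_1\phi_1^*$ and $T = \phi_2\phi_2^*$.

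First I would note that $(\phi_1\phi_1^*)^* = \phi_1\phi_1^*$ since each induced outer product is self-adjoint, so the inner product becomes $\tr(\phi_1\phi_1^*\phi_2\phi_2^*)$. Next I would regroup the matrix product using associativity of matrix multiplication: the middle factor $\phi_1^*\phi_2$ is a $1\times 1$ matrix, i.e.\ a scalar, equal to $\langle \phi_2,\phi_1\rangle$ under the convention $\phi_1^*\phi_2 = \sum_i \overline{\phi_1(i)}\,\phi_2(i)$. Pulling this scalar out leaves $\langle \phi_2,\phi_1\rangle\,\tr(\phi_1\phi_2^*)$.

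It then remains to evaluate $\tr(\phi_1\phi_2^*)$. Since $\phi_1\phi_2^*$ is again a rank-one matrix whose $(i,i)$ entry is $\phi_1(i)\overline{\phi_2(i)}$, its trace is $\sum_i \phi_1(i)\overline{\phi_2(i)} = \langle \phi_1,\phi_2\rangle$. Equivalently, one may use the cyclic property of the trace, $\tr(\phi_1\phi_2^*) = \tr(\phi_2^*\phi_1) = \langle \phi_1,\phi_2\rangle$. Combining the two factors gives $\langle \phi_2,\phi_1\rangle\langle \phi_1,\phi_2\rangle = \overline{\langle \phi_1,\phi_2\rangle}\,\langle\phi_1,\phi_2\rangle = |\langle \phi_1,\phi_2\rangle|^2$, as claimed.

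I do not anticipate a genuine obstacle here, since this is a short identity; the only point requiring care is bookkeeping of conjugates in the complex case. Specifically, one must be consistent about whether $\phi^*\psi$ denotes $\langle \psi,\phi\rangle$ or $\langle \phi,\psi\rangle$ under the paper's convention $\langle \phi,\psi\rangle = \sum_i \phi(i)\overline{\psi(i)}$, and the two scalar factors produced above must turn out to be complex conjugates of one another so that their product is the squared modulus. In the real case all conjugations are trivial and both factors simply equal $\langle \phi_1,\phi_2\rangle$, giving $\langle \phi_1,\phi_2\rangle^2$.
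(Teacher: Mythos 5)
Your proposal is correct and follows essentially the same route as the paper: both compute the Frobenius inner product as a trace, use associativity to pull out the scalar $\phi_1^*\phi_2$, and evaluate the remaining rank-one trace (via cyclicity) to obtain the product of two conjugate inner products, hence $|\langle\phi_1,\phi_2\rangle|^2$. Your extra care with the conjugation conventions is sound but does not change the argument.
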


\begin{proof}
We compute:
\begin{align*} \langle \phi_1\phi_1^*,\phi_2\phi_2^*\rangle_F
&= Tr(\phi_2\phi_2^* \phi_1\phi_1^*)\\
&= Tr(\phi_2\ip{\phi_2}{\phi_1}\phi_1^*)\\
&= Tr(\ip{\phi_1}{\phi_2}\ip{\phi_2}{\phi_1})\\
&= |\ip{\phi_1}{\phi_2}|^2.
\end{align*}
\end{proof}

\begin{cor}
$\phi_1 \perp \phi_2$ in $\HH^N$ if and only if  $\phi_1\phi_1^* \perp
\phi_2\phi_2^*$ in $\mathrm{sym}(\HH^{N\times N})$.
\end{cor}

\begin{proposition}\label{pc1}
        Let $\{\phi_i\}_{i=1}^M$ be a unit norm frame for $\HH^N$. The family $\{\phi_i\phi_i^*\}_{i=1}^M$ is linearly independent
        if and only if there are scalars $\{a_i\}_{i=1}^M$ with $a_i \ge 0$ and $I\subset [M]$ so that 
        if $S_I$ is the frame operator $\{\sqrt{a_i}\phi_i\}_{i\in I}$, and $S_{I^c}$ is the frame operator of the frame sequence $\{\sqrt{-a_i}\phi_i\}_{i\in I^c}$.  Then
        \[S_I=S_{I^c}.\]
\end{proposition}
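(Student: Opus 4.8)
The plan is to convert a linear dependence relation among the rank-one self-adjoint operators $\phi_i\phi_i^*$ directly into the operator identity $S_I=S_{I^c}$, using the single observation that a nonnegatively-weighted combination is literally a frame operator: for $a_i\ge 0$ one has $(\sqrt{a_i}\,\phi_i)(\sqrt{a_i}\,\phi_i)^*=a_i\,\phi_i\phi_i^*$, so $\sum_{i\in I}a_i\phi_i\phi_i^*$ is precisely the frame operator of $\{\sqrt{a_i}\,\phi_i\}_{i\in I}$. Thus the whole statement is really a bookkeeping translation, and I would prove the contrapositive/dependence form: the family $\{\phi_i\phi_i^*\}_{i=1}^M$ is linearly \emph{dependent} if and only if there is a nontrivial choice of weights and a partition $I\sqcup I^c=[M]$, with $a_i\ge 0$ on $I$ and $a_i\le 0$ on $I^c$, such that $S_I=S_{I^c}$; the outer products are therefore independent exactly when no such nontrivial configuration exists.

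For the forward direction I would begin from a nontrivial relation $\sum_{i=1}^M a_i\,\phi_i\phi_i^*=0$. The first substantive point, which matters only over $\CC$, is that the coefficients may be taken real: since each $\phi_i\phi_i^*$ is self-adjoint, applying the adjoint to the relation and then adding and subtracting shows that $\sum_i \operatorname{Re}(a_i)\phi_i\phi_i^*=0$ and $\sum_i \operatorname{Im}(a_i)\phi_i\phi_i^*=0$ separately, so a nontrivial complex relation yields a nontrivial real one. With real $a_i$ in hand I would set $I=\{i:a_i\ge 0\}$ and $I^c=\{i:a_i<0\}$, move the negative terms to the other side, and rewrite the relation as
\[ \sum_{i\in I}a_i\,\phi_i\phi_i^* \;=\; \sum_{i\in I^c}(-a_i)\,\phi_i\phi_i^*. \]
Recognizing the left side as the frame operator $S_I$ of $\{\sqrt{a_i}\,\phi_i\}_{i\in I}$ and the right side as the frame operator $S_{I^c}$ of $\{\sqrt{-a_i}\,\phi_i\}_{i\in I^c}$ gives exactly $S_I=S_{I^c}$. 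The converse is the same computation run backward: given the weights and the partition with $S_I=S_{I^c}$, expanding both frame operators as sums of induced outer products and subtracting produces
\[ \sum_{i\in I}a_i\,\phi_i\phi_i^* \;-\; \sum_{i\in I^c}(-a_i)\,\phi_i\phi_i^* \;=\; 0, \]
a genuine dependence relation among the $\phi_i\phi_i^*$ provided the weights are not all zero.

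The main obstacle here is not depth but care with the nontriviality and sign conventions rather than any hard estimate. I must ensure the produced relation is genuinely nontrivial, which forces the statement to quantify over weights that are not identically zero; otherwise $S_I=S_{I^c}=0$ holds vacuously with every $a_i=0$ and the equivalence degenerates. I would also record the degenerate partitions explicitly: if $I^c=\varnothing$ then $S_{I^c}=0$, forcing $S_I=0$, and symmetrically for $I=\varnothing$. Finally I want to state the sign convention ($a_i\ge 0$ on $I$, $a_i\le 0$ on $I^c$) so that both $\sqrt{a_i}$ and $\sqrt{-a_i}$ are real and the frame-operator interpretation on each side is legitimate. The self-adjoint reduction over $\CC$ is the only step that requires a moment's thought; everything else is the identity $\langle\phi_1\phi_1^*,\phi_2\phi_2^*\rangle_F=|\langle\phi_1,\phi_2\rangle|^2$ machinery already in place and straightforward rearrangement.
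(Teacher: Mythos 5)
Your proposal is correct and follows essentially the same route as the paper's own proof: split a (real-coefficient) dependence relation $\sum_i a_i\phi_i\phi_i^*=0$ by the sign of the coefficients via $I=\{i: a_i\ge 0\}$ and recognize each side as a frame operator through the identity $(\sqrt{a_i}\,\phi_i)(\sqrt{a_i}\,\phi_i)^* = a_i\,\phi_i\phi_i^*$. The additional care you take---reducing complex coefficients to real ones using self-adjointness of the outer products, insisting the weights be nontrivial, and reading the statement as a characterization of \emph{dependence} rather than independence---repairs genuine imprecisions in the paper's terse two-line proof, but the underlying argument is identical.
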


\begin{proof}We observe that

\[ \sum_{i=1}^M a_i\phi_i\phi_i^*=0, \]
if and only if letting $I=\{1\le i \le M:a_i\ge 0\}$, we have
\[ \sum_{i\in I}a_i\phi_i\phi_i^*= \sum_{i\in I}(\sqrt{a_i}\phi_i)(
\sqrt{a_i}\phi_i)^* = S_I = S_{I^c} = \sum_{i\in I^c}
(\sqrt{-a_i}\phi_i)(\sqrt{-a_i}\phi_i)^*.\] 
\end{proof}

One of the main tools in examining the outer products of a collection of vectors will be the Gram matrices of our vectors. When dealing with a Riesz sequence, or a linearly independent collection of vectors, the Gram matrix will be positive-definite. Furthermore, the largest and smallest eigenvalues of this matrix represent the upper and lower Riesz bounds of our sequence respectively. In the case of redundant frames, the Gram matrix is singular. However, the largest and smallest non zero eigenvalues give the upper and lower frame bounds respectively. We will need the
 the Gram matrix matrix of out products.

\begin{theorem}
        Let $\{\phi_i\}_{i=1}^M$ be a sequence of vectors in $\HH^N$. Then the Gram matrix of $\{\phi_i\phi_i^*\}_{i=1}^M$ is
        \[G=[|\ip{\phi_i}{\phi_j}|^2].\]
        Moreover,
        \begin{enumerate}
                \item If $\{\phi_i\phi_i^*\}_{i=1}^M$ is a Riesz sequence, then the optimal Riesz bounds are the largest and smallest eigenvalue of $G$. 
                \item If $\{\phi_i\phi_i^*\}_{i=1}^M$ is a frame then the frame bounds are the largest and smallest non-zero eigenvalues of $G$.
        \end{enumerate}
\end{theorem}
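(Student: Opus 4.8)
The plan is to reduce everything to the standard finite-dimensional Gram-matrix theory recorded in Section~1, now applied to $\{\phi_i\phi_i^*\}_{i=1}^M$ regarded as a sequence in the \emph{real} inner product space $\mathrm{sym}(\HH^{N\times N})$ equipped with $\ip{\cdot}{\cdot}_F$.

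First I would establish the formula for $G$. By definition, the $(i,j)$ entry of the Gram matrix of $\{\phi_i\phi_i^*\}_{i=1}^M$ is $\ip{\phi_i\phi_i^*}{\phi_j\phi_j^*}_F$, and Lemma~\ref{pc2} shows this equals $|\ip{\phi_i}{\phi_j}|^2$, giving $G=[|\ip{\phi_i}{\phi_j}|^2]$. I would also record two structural facts used below: every entry is a nonnegative real number, and $|\ip{\phi_i}{\phi_j}|^2=|\ip{\phi_j}{\phi_i}|^2$, so $G$ is a real symmetric positive semidefinite matrix and in particular has real eigenvalues.

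Next, for the two bound statements I would compute the relevant quadratic form. Because $\mathrm{sym}(\HH^{N\times N})$ is a real vector space, linear combinations $\sum_i a_i\phi_i\phi_i^*$ use real coefficients $a=(a_i)\in\RR^M$, and expanding the squared Frobenius norm yields
\[\left\|\sum_{i=1}^M a_i\phi_i\phi_i^*\right\|_F^2=\sum_{i,j=1}^M a_ia_j\ip{\phi_i\phi_i^*}{\phi_j\phi_j^*}_F=\sum_{i,j=1}^M a_ia_j|\ip{\phi_i}{\phi_j}|^2=a^*Ga,\]
using the formula for $G$ just obtained. For part (1), when $\{\phi_i\phi_i^*\}_{i=1}^M$ is a Riesz sequence the defining inequalities read exactly $A\|a\|^2\le a^*Ga\le B\|a\|^2$ for all $a$; by the Rayleigh--Ritz characterization of the extreme eigenvalues of a real symmetric matrix, the largest admissible $A$ is the smallest eigenvalue of $G$ and the smallest admissible $B$ is the largest eigenvalue of $G$, which are therefore the optimal Riesz bounds.

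For part (2) I would invoke the general principle already recorded after the definition of the synthesis and analysis operators: for any sequence the nonzero eigenvalues of the frame operator $S=TT^*$ coincide with the nonzero eigenvalues of the Gram matrix $G=T^*T$. Applying this to the synthesis operator of $\{\phi_i\phi_i^*\}_{i=1}^M$ (a linear map $\RR^M\to\mathrm{sym}(\HH^{N\times N})$), together with the earlier theorem identifying the optimal frame bounds with the extreme eigenvalues of the frame operator, shows that when $\{\phi_i\phi_i^*\}_{i=1}^M$ is a frame its frame bounds are the largest and smallest nonzero eigenvalues of $G$. The argument is essentially bookkeeping; the only point demanding care is the real-linear structure of the ambient space, which is exactly why the coefficients are real and why $G$ being real symmetric makes the Rayleigh quotient and frame-operator arguments transfer cleanly, so I do not anticipate a substantial obstacle beyond correctly transporting the standard machinery to the Frobenius-inner-product space of self-adjoint operators.
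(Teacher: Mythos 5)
Your proposal is correct and is exactly the argument the paper intends: the paper states this theorem without proof, leaving it as the routine combination of Lemma~\ref{pc2} (for the entries of $G$), the Rayleigh quotient applied to $a^*Ga=\left\|\sum_i a_i\phi_i\phi_i^*\right\|_F^2$, and the already-recorded fact that $TT^*$ and $T^*T$ share nonzero eigenvalues. Your attention to the real-linear structure of $\mathrm{sym}(\HH^{N\times N})$ and the resulting real symmetry of $G$ is the one point worth making explicit, and you handle it correctly.
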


The Gram matrix of the induced outer products can be represented in terms of the Gram matrix of the original vectors by using the Hadamard product.

\begin{definition}
        Given two matrices $A=[a_{ij}]$ and $B=[b_{ij}]$ in $\HH^{M\times N}$ the \emph{Hadamard product} of $A$ and $B$ is
        \[A\circ B=[a_{ij}b_{ij}].\]
\end{definition}

The following is a well known theorem about Hadamard products, see \cite{HJ_topics} for example. 

\begin{theorem}\label{gram}
        Let $A$ and $B$ be Hermitian with $A=[a_{ij}]$ positive semidefinite. Any eigenvalue $\lambda(A\circ B)$ of $A\circ B$ satisfies
        \begin{align*}
        \lambda_{min}(A)\lambda_{min}(B)&\leq [\min_{i}a_{ii}]\lambda_{min}(B)\\
        &\leq \lambda(A\circ B)\\
        &\leq [\max_{i}a_{ii}]\lambda_{max}(B)\\
        &\leq \lambda_{max}(A)\lambda_{max}(B).
        \end{align*}
\end{theorem}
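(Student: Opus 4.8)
The plan is to deduce the whole chain from the Schur product theorem, which says that the Hadamard product of two positive semidefinite Hermitian matrices is again positive semidefinite. Two preliminary observations set up the outer (framing) inequalities. Since $A$ is Hermitian and positive semidefinite, each diagonal entry is a Rayleigh quotient, $a_{ii}=e_i^*Ae_i$, so the min--max principle gives $\lambda_{\min}(A)\le a_{ii}\le\lambda_{\max}(A)$ for every $i$, and in particular $\lambda_{\min}(A)\le\min_i a_{ii}$ and $\max_i a_{ii}\le\lambda_{\max}(A)$; moreover every $a_{ii}\ge 0$.

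For the heart of the argument I would produce a two-sided Loewner sandwich for $A\circ B$. Writing $D_A=A\circ I=\diag(a_{11},\dots,a_{MM})$, start from $\lambda_{\min}(B)I\preceq B\preceq\lambda_{\max}(B)I$, so that $B-\lambda_{\min}(B)I\succeq 0$ and $\lambda_{\max}(B)I-B\succeq 0$. Taking the Hadamard product of each with the positive semidefinite matrix $A$ and applying the Schur product theorem gives $A\circ(B-\lambda_{\min}(B)I)\succeq 0$ and $A\circ(\lambda_{\max}(B)I-B)\succeq 0$. Using bilinearity of $\circ$ and $A\circ I=D_A$, these rearrange to
\[ \lambda_{\min}(B)\,D_A \;\preceq\; A\circ B \;\preceq\; \lambda_{\max}(B)\,D_A. \]

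Next I would extract the eigenvalue bounds. If $\lambda=\lambda(A\circ B)$ with unit eigenvector $v$, pairing the sandwich against $v$ yields
\[ \lambda_{\min}(B)\sum_i a_{ii}\abs{v_i}^2 \;\le\; \lambda \;\le\; \lambda_{\max}(B)\sum_i a_{ii}\abs{v_i}^2. \]
Because $\sum_i\abs{v_i}^2=1$ and each $a_{ii}\ge 0$, the sum $\sum_i a_{ii}\abs{v_i}^2$ is a convex combination of the diagonal entries and hence lies in $[\min_i a_{ii},\max_i a_{ii}]$. Replacing it by $\min_i a_{ii}$ on the left and $\max_i a_{ii}$ on the right delivers the two inner inequalities $[\min_i a_{ii}]\lambda_{\min}(B)\le\lambda\le[\max_i a_{ii}]\lambda_{\max}(B)$, and multiplying the framing facts $\lambda_{\min}(A)\le\min_i a_{ii}$ and $\max_i a_{ii}\le\lambda_{\max}(A)$ by $\lambda_{\min}(B)$ and $\lambda_{\max}(B)$ respectively closes off the outermost inequalities.

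The one step that genuinely needs care is the sign bookkeeping in the last paragraph: replacing the convex combination by an endpoint, and multiplying the framing inequalities through by $\lambda_{\min}(B)$ and $\lambda_{\max}(B)$, preserves the direction of each inequality only when the scalar multipliers $\lambda_{\min}(B)$ and $\lambda_{\max}(B)$ are nonnegative. I would therefore make explicit the proviso under which the chain holds as written, namely that these quantities are nonnegative; this is automatic in our intended use, where $B$ arises as a Gram matrix (or a Hadamard power of one) and is itself positive semidefinite, so $\lambda_{\min}(B)\ge 0$. Under that proviso each inequality in the displayed chain points the correct way and the theorem follows.
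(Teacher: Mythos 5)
Your proof is correct, but there is nothing in the paper to compare it against: the authors state Theorem~\ref{gram} as a known result and simply cite \cite{HJ_topics} (it is Theorem~5.3.4 there), offering no proof of their own. What you have reconstructed is essentially the standard textbook argument: the Schur product theorem applied to $A\circ(B-\lambda_{\min}(B)I)$ and $A\circ(\lambda_{\max}(B)I-B)$ to get the Loewner sandwich $\lambda_{\min}(B)\,D_A\preceq A\circ B\preceq\lambda_{\max}(B)\,D_A$, followed by Rayleigh quotients against a unit eigenvector and the min--max bounds $\lambda_{\min}(A)\le a_{ii}\le\lambda_{\max}(A)$. All the individual steps check out.

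Your closing caveat deserves emphasis, because it is not mere pedantry: the chain as printed is actually false for general Hermitian $B$. Take $A=\diag(1,\tfrac14)$ (positive semidefinite) and $B=\diag(-1,1)$; then $A\circ B=\diag(-1,\tfrac14)$ has the eigenvalue $-1$, which violates the claimed lower bound $[\min_i a_{ii}]\lambda_{\min}(B)=-\tfrac14$. The correct unconditional bounds extracted from your sandwich are $\lambda(A\circ B)\ge\lambda_{\min}(B)\max_i a_{ii}$ when $\lambda_{\min}(B)<0$, and symmetrically at the top; the displayed chain is valid exactly under your proviso that $\lambda_{\min}(B)\ge 0$ and $\lambda_{\max}(B)\ge 0$. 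This proviso is automatic in the paper's only application, Corollary~\ref{riesz_riesz}, where $B=G^{T}$ is (the transpose of) a Gram matrix and hence positive semidefinite, so no harm is done downstream --- but your version of the statement is the honest one, and your proof, unlike the paper's citation, makes the hidden sign hypothesis visible.
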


\begin{corollary}\label{riesz_riesz}
        If $\{\phi_i\}_{i=1}^M$ is a unit norm Riesz 
        sequence with Riesz bounds $A$ and $B$ then $\{\phi_i\phi_i^*\}_{i=1}^M$ is also Riesz with the same Riesz bounds.
\end{corollary}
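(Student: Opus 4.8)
The plan is to realize the Gram matrix of the induced outer products as a Hadamard product of the Gram matrix of the original vectors with its entrywise conjugate, and then to squeeze its spectrum using the Hadamard eigenvalue estimate of Theorem \ref{gram}. First I would recall that, by the theorem computing the Gram matrix of the induced outer products, the sequence $\{\phi_i\phi_i^*\}$ has Gram matrix $G = [|\langle\phi_i,\phi_j\rangle|^2]$, while the original vectors have Gram matrix $G_\Phi = [\langle\phi_i,\phi_j\rangle]$. Writing $\overline{G_\Phi}$ for the entrywise complex conjugate, the key algebraic observation is that
\[ G = G_\Phi \circ \overline{G_\Phi}, \]
since the $(i,j)$ entry of the right-hand side is $\langle\phi_i,\phi_j\rangle\,\overline{\langle\phi_i,\phi_j\rangle} = |\langle\phi_i,\phi_j\rangle|^2$. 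Because $\{\phi_i\}$ is a Riesz sequence, $G_\Phi$ is Hermitian and positive definite with all eigenvalues lying in $[A,B]$ (the extreme ones being the optimal Riesz bounds $A$ and $B$). Since $G_\Phi$ is Hermitian its eigenvalues are real, whence $\overline{G_\Phi} = G_\Phi^{\,T}$ is again Hermitian and positive definite with the identical spectrum contained in $[A,B]$.

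Next I would apply Theorem \ref{gram} with the positive semidefinite Hermitian matrix $G_\Phi$ in the role of $A$ and $\overline{G_\Phi}$ in the role of $B$. The decisive simplification is that the unit norm hypothesis forces every diagonal entry of $G_\Phi$ to equal $\langle\phi_i,\phi_i\rangle = \|\phi_i\|^2 = 1$, so that $\min_i a_{ii} = \max_i a_{ii} = 1$. The two outer inequalities of Theorem \ref{gram} then collapse onto the inner ones, and the sandwich reads
\[ A = \lambda_{\min}(\overline{G_\Phi}) \le \lambda(G) \le \lambda_{\max}(\overline{G_\Phi}) = B \]
for every eigenvalue $\lambda(G)$ of $G = G_\Phi\circ\overline{G_\Phi}$. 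Thus the entire spectrum of $G$ is confined to $[A,B]$.

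Finally, the bound $\lambda(G)\ge A>0$ shows $G$ is positive definite, so $\{\phi_i\phi_i^*\}$ is linearly independent and hence a Riesz sequence; by the theorem identifying the optimal Riesz bounds of a sequence with the extreme eigenvalues of its Gram matrix, those optimal bounds are $\lambda_{\min}(G)$ and $\lambda_{\max}(G)$, both lying in $[A,B]$. Consequently $A$ and $B$ serve as Riesz bounds for the outer product sequence, with the optimal bounds possibly strictly tighter, matching the ``same or better'' phrasing of the abstract. I expect the only delicate point to be confirming that both Hadamard factors meet the Hermitian and positive-semidefinite hypotheses of Theorem \ref{gram}; once the Hadamard factorization is in hand and the unit norm normalization is used, the remainder is a direct substitution.
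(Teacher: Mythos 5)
Your proposal is correct and follows essentially the same route as the paper's proof: both identify the outer-product Gram matrix as the Hadamard product $G\circ\overline{G}=G\circ G^{T}$, invoke the eigenvalue sandwich of Theorem~\ref{gram}, and use the unit norm hypothesis to make the diagonal entries equal to $1$ so the bounds collapse to $[A,B]$. Your write-up merely spells out the details (Hermitian/positive-definite hypotheses, equality of the spectra of $G$ and $G^{T}$, positive definiteness of the resulting Gram matrix) that the paper's terse proof leaves implicit.
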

\begin{proof}
        Let $G$ be the Gram matrix of $\{\phi_i\}_{i=1}^M$ and $H$ be the Gram matrix of the induced outer products. Then 
        \[H=G\circ \overline{G}=G\circ G^T.\]
        Since $G$ and $G^T$ have the same eigenvalues and the diagonal entries of $G$ are $\|\phi_i\|^2$, the result follows.
\end{proof}
The above proofs are convenient for their conciseness but mask much of the machinery at use. For a direct proof which may be more enlightening see Appendix~\ref{brians_argument}.

It may not be surprising that unit norm
Riesz sequences produce Riesz outer products--but what is surprising is that \emph{the same Riesz bounds hold!} That is, Riesz bounds cannot worsen when moving to the outer product space. A natural question to ask at this point is whether the Riesz bounds of the induced outer products can be better than the Riesz bounds of the original vectors.  The answer is {\it 
yes}.
\begin{example}
        Let $\phi_1=[0,1]^T,$ $\phi_2=[\sqrt{\varepsilon},\sqrt{1-\varepsilon}]^T$ for $0< \varepsilon<1$. Then $\{\phi_i\}_{i=1}^2$ is Riesz with Riesz bounds $1-\sqrt{\varepsilon }$ and $1+\sqrt{\varepsilon}$ while $\{\phi_i\phi_i^*\}_{i=1}^2$ is Riesz with bounds $1-\varepsilon$ and $1+\varepsilon$. 
\end{example}
\begin{proof}
        The Gram matrix of $\{\phi_1,\phi_2\}$ is
        \[\begin{bmatrix}
        1 & \sqrt{\varepsilon} \\
        \sqrt{\varepsilon} & 1 
        \end{bmatrix}\]
        while that of $\{\phi_1\phi_1^*,\phi_2\phi_2^*\}$ is
        \[\begin{bmatrix}
        1&\varepsilon\\
        \varepsilon & 1
        \end{bmatrix}.\] The eigenvalues of these matrices are as required.
\end{proof}

\section{Some Results Guaranteeing Riesz Outer Products}
The preceding sections show the difficulty in deciding whether a dependent collection of vectors produces independent outer products. Later, we will see that ``most'' 
family of vectors
induce independent outer product sequences. A more relevant question is ``which frames induce dependent outer products?'' We will see a full characterization of all frames which induce dependent outer products. For now, we give a few simple observations which can be used to quickly check whether a sequence will produce independent outer products. 

\subsection{Sparsity and Vectorized Outer Products}

\begin{definition}
        Let $\phi\in \HH^N$. Define the \emph{vectorization} of $\phi\phi^*$ as the vector obtained by stacking the columns on top of each other. That is, the vectorization of $\phi\phi^*$ is 
        \[\begin{bmatrix}
        \phi(1)\overline{\phi}\\
        \phi(2)\overline{\phi}\\
        \vdots \\
        \phi(N)\overline{\phi}
        \end{bmatrix}\]
        where $\phi(k)$ is the $k$th entry of $\phi$.
\end{definition}
\begin{proposition}\label{sparsity}
        Let $\{\phi_i\}_{i=1}^M$ be a frame for $\HH^N$ with no zero vectors. For $k=1,\dots, N$ define $I_k=\{i:\phi_i(k)\neq 0\}$. If $\{\phi_i\}_{i\in I_k}$ is independent for all $k$, then $\{\phi_i\phi_i^*\}_{i=1}^M$ is independent.

\end{proposition}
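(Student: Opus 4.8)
The plan is to run the argument through the vectorization map introduced just above, which is a linear isomorphism from $\HH^{N\times N}$ onto $\HH^{N^2}$. In particular it carries the dependence relation $\sum_{i=1}^M a_i\phi_i\phi_i^*=0$ to the vector relation $\sum_{i=1}^M a_i\,\ve{\phi_i\phi_i^*}=0$, and $\{\phi_i\phi_i^*\}_{i=1}^M$ is independent precisely when only the trivial scalars solve the latter. So I would begin by assuming $\sum_{i=1}^M a_i\phi_i\phi_i^*=0$ and aim to deduce that every $a_i=0$.

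The key move is to read this relation one block at a time. By the definition of vectorization, the $k$-th block of $\ve{\phi_i\phi_i^*}$ is $\phi_i(k)\overline{\phi_i}$, so equating the $k$-th block of the sum to zero yields $\sum_{i=1}^M a_i\phi_i(k)\overline{\phi_i}=0$ for each $k=1,\dots,N$. This is exactly where the sparsity hypothesis enters: every summand with $\phi_i(k)=0$ drops out, leaving $\sum_{i\in I_k} a_i\phi_i(k)\overline{\phi_i}=0$. I would then invoke the independence of $\{\phi_i\}_{i\in I_k}$. Since complex conjugation sends a linearly independent family to a linearly independent one (because $\sum c_i\overline{\phi_i}=0$ holds iff $\sum\overline{c_i}\phi_i=0$), the family $\{\overline{\phi_i}\}_{i\in I_k}$ is also independent, forcing each coefficient $a_i\phi_i(k)$ to vanish. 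But $\phi_i(k)\neq 0$ for $i\in I_k$ by the definition of $I_k$, so $a_i=0$ for every $i\in I_k$.

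To finish, I would use that no $\phi_i$ is the zero vector: each index $i$ has some coordinate $\phi_i(k)\neq 0$, i.e.\ $i\in I_k$ for at least one $k$, and hence $a_i=0$ for all $i$, giving independence of the outer products. The one place needing genuine care — and the main (if modest) obstacle — is the passage through conjugation over $\CC$. In the real case $\overline{\phi_i}=\phi_i$ and the hypothesis applies verbatim, but over $\CC$ one must not try to apply independence to $\overline{\phi_i}$ directly; instead one observes that independence is preserved under conjugation, as above.
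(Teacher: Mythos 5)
Your proposal is correct and follows essentially the same route as the paper: vectorize each $\phi_i\phi_i^*$, read the dependence relation block by block to get $\sum_{i\in I_k} a_i\phi_i(k)\overline{\phi_i}=0$, and invoke independence of $\{\phi_i\}_{i\in I_k}$ (the paper runs this as a contradiction from a nonzero term, while you argue directly that all $a_i$ vanish, which is slightly cleaner). Your explicit remark that conjugation preserves linear independence over $\CC$ fills a small point the paper's proof passes over silently.
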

\begin{proof}
        Let $\{\phi_{i}\}_{i=1}^M$ be a frame with the properties as stated. Let $C_i$ be the vectorization of $\phi_i\phi_i^*$. Now consider the synthesis operator of $\{C_i\}_{i=1}^M$:
                \[\begin{bmatrix}
        \phi_1(1)\overline{\phi_1} & \phi_2(1)\overline{\phi_2} & \phi_3(1)\overline{\phi_3} & \cdots& \phi_M(1)\overline{\phi_M}\\
        \phi_1(2)\overline{\phi_1} & \phi_2(2)\overline{\phi_2} & \phi_3(2)\overline{\phi_3} & \cdots &\phi_M(2)\overline{\phi_M}\\
        \vdots & \vdots & \vdots & & \vdots\\
        \phi_1(N)\overline{\phi_1} & \phi_2(N)\overline{\phi_2} & \phi_3(N)\overline{\phi_3} & \cdots &\phi_M(N)\overline{\phi_M}\\
        \end{bmatrix}.\]
 Notice that since $0\notin \{\phi_i\}_{i=1}^M$ we have that each $\phi_i$ contains at least one nonzero entry, say $\phi_i(k)\neq 0$. Then since $\phi_i(k)\overline{\phi_i}$ is part of $C_i$ we have that $C_i\neq 0$ for all $i$. 
        
        Now suppose that there exists scalars $a_i$ (not all zero) such that
        \[\sum_{i=1}^Ma_iC_i=0.\] 
        Then there is at least one $l$ such that $a_lC_l\neq 0$. Then by hypothesis, there is a row $k$ such that $\sum_{i} a_i\phi_i(k)\overline{\phi_i}=0$ but $a_l\phi_l(k)\overline{\phi_l}\neq 0$. Then 
        \[\sum_{i\in I_k}a_i\phi_i(k)\overline{\phi_i}=0\] which contradicts that $\{\phi_i\}_{i\in I_k}$ is linearly independent.

\end{proof}

\begin{remark}
        The conditions of the above proposition are fairly constrictive but, in certain cases, this can be useful. It will be used to verify a later example quickly.
\end{remark}

\begin{corollary}
        Let $\{\phi_i\}_{i=1}^M$ be a frame for which
         every subset of size $k$ is linearly independent. If the rows of the analysis operator are $k$-sparse then the induced outer products are linearly independent.
\end{corollary}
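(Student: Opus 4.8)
The plan is to deduce this directly from Proposition~\ref{sparsity}, whose hypothesis requires that for each coordinate $k \in \{1,\dots,N\}$ the subfamily $\{\phi_i\}_{i \in I_k}$ be linearly independent, where $I_k = \{i : \phi_i(k) \neq 0\}$. So the entire task is to translate the two hypotheses of the corollary into this single per-coordinate independence condition and then quote the proposition.

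First I would unwind the sparsity assumption. The quantity that actually feeds Proposition~\ref{sparsity} is $|I_k|$, the number of vectors in the family whose $k$-th coordinate is nonzero. This is exactly the number of nonzero entries in the $k$-th coordinate slice $(\phi_1(k), \phi_2(k), \dots, \phi_M(k))$, i.e.\ in the $k$-th row of the synthesis operator, equivalently the $k$-th column of the analysis operator. The $k$-sparsity hypothesis therefore amounts to the bound $|I_k| \le k$ for every $k$. I would flag here the one point where care is genuinely needed: the sparsity that matters is the number of vectors nonzero in a \emph{fixed} coordinate, not the number of nonzero coordinates of a fixed vector, so one must read the sparsity along the correct axis of the frame matrix. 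Getting this axis right is the only substantive bookkeeping in the proof.

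Next I would invoke the spark-type hypothesis. Since every $k$-element subset of $\{\phi_i\}_{i=1}^M$ is linearly independent and $M \ge N \ge k$, any subfamily of size at most $k$ is linearly independent as well, being contained in some independent $k$-element subset (subsets of independent sets are independent). As $|I_k| \le k$, the subfamily $\{\phi_i\}_{i \in I_k}$ has at most $k$ members and is hence linearly independent, for every $k$. I would also note in passing that with $k \ge 1$ the hypothesis forces each individual $\phi_i$ to be nonzero, so the ``no zero vectors'' requirement of Proposition~\ref{sparsity} is met. With the independence of each $\{\phi_i\}_{i \in I_k}$ established, Proposition~\ref{sparsity} immediately yields that $\{\phi_i\phi_i^*\}_{i=1}^M$ is linearly independent. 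There is no real obstacle beyond Step~1: once ``$k$-sparse'' is correctly identified with the bound $|I_k| \le k$, the conclusion is a one-line application of the already-proved proposition.
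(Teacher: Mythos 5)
Your proposal is correct and matches the paper's (implicit) argument: the corollary is stated without proof precisely because it is the immediate application of Proposition~\ref{sparsity} that you spell out, i.e.\ identify the sparsity hypothesis with the bound $|I_k|\le k$, note that any subfamily of size at most $k$ sits inside an independent $k$-element subset (and that $k\ge 1$ rules out zero vectors), and quote the proposition. Your flag about the axis is warranted and is the one substantive point: under the paper's own definitions the rows of the analysis operator $T^*$ are the vectors $\phi_i^*$ themselves, and with that literal reading the corollary is false --- take $\phi_1=e_1$, $\phi_2=e_2$, $\phi_3=(e_1+e_2)/\sqrt{2}$, $\phi_4=(e_1-e_2)/\sqrt{2}$ in $\RR^2$ with $k=2$: every row is $2$-sparse and every pair is independent, yet $\phi_3\phi_3^*+\phi_4\phi_4^*=\phi_1\phi_1^*+\phi_2\phi_2^*$ --- so the sparsity must indeed be read along the coordinate slices (rows of the synthesis operator, equivalently columns of the analysis operator), exactly as you do.
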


\section{Computation of Riesz Bounds}
In the following section we will examine more closely the Riesz bounds of the induced outer products. Here, we give the ``optimal'' Riesz bounds for a set of unit norm vectors, and sufficient conditions to achieve them.

The following is immediate by Lemma~\ref{pc2}. 

\begin{proposition}\label{orthorth}
        Let $\{\phi_i\}_{i=1}^M$ be vectors in $\HH^N$. The sequence $\{\phi_i\phi_i^*\}_{i=1}^M$ is orthonormal if and only if $\{\phi_i\}_{i=1}^M$ is orthonormal.
\end{proposition}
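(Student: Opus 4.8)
The plan is to reduce both directions simultaneously to the single computation furnished by Lemma~\ref{pc2}. Recall that a family $\{\phi_i\phi_i^*\}_{i=1}^M$ in $\mathrm{sym}(\HH^{N\times N})$ is orthonormal precisely when $\langle \phi_i\phi_i^*,\phi_j\phi_j^*\rangle_F=\delta_{ij}$ for all $i,j$, where $\delta_{ij}$ denotes the Kronecker delta. By Lemma~\ref{pc2} the left-hand side equals $|\langle \phi_i,\phi_j\rangle|^2$, so the orthonormality condition on the outer products is equivalent to the single system of scalar equations $|\langle \phi_i,\phi_j\rangle|^2=\delta_{ij}$.

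I would then unpack this system into its diagonal and off-diagonal cases. For $i=j$ the equation reads $|\langle \phi_i,\phi_i\rangle|^2=\|\phi_i\|^4=1$, which, since $\|\phi_i\|\ge 0$, holds if and only if $\|\phi_i\|=1$. For $i\neq j$ it reads $|\langle \phi_i,\phi_j\rangle|^2=0$, which holds if and only if $\langle \phi_i,\phi_j\rangle=0$. Taken together over all index pairs, these are exactly the conditions that $\{\phi_i\}_{i=1}^M$ be a normalized pairwise-orthogonal set, i.e.\ an orthonormal sequence in $\HH^N$.

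Since each of these steps is a biconditional, chaining them yields the claimed equivalence in both directions at once, with no additional argument required. There is no genuine obstacle here: the entire content is carried by Lemma~\ref{pc2}, and the only care needed is to observe that the squared modulus $|\langle \phi_i,\phi_j\rangle|^2$ vanishes if and only if the inner product itself vanishes (valid over both $\RR$ and $\CC$), and that a nonnegative quantity equals $1$ exactly when its positive fourth root does.
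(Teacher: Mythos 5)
Your proof is correct and follows exactly the route the paper intends: the paper simply declares the proposition ``immediate by Lemma~\ref{pc2},'' and your argument is precisely the spelled-out version of that observation, translating $\langle \phi_i\phi_i^*,\phi_j\phi_j^*\rangle_F=\delta_{ij}$ into $|\langle\phi_i,\phi_j\rangle|^2=\delta_{ij}$ and unpacking the diagonal and off-diagonal cases. No gaps; nothing further is needed.
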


Since a redundant frame can not produce a Riesz sequence with tight Riesz bounds, one might ask how close we can get. Before computing the optimal Riesz bounds of a set of rank one projections we need to introduce the frame potential.

\begin{definition}
Let $\{\phi_i\}_{i=1}^M$ be a frame in $\HH^N$. The frame potential is
\[\mathrm{FP}(\{\phi_i\}_{i=1}^M)=\sum_{i=1}^M\sum_{j=1}^M|\ip{\phi_i}{\phi_j}|^2.\]
\end{definition}

\begin{proposition}
        The frame potential of a unit norm tight frame with $M$ elements in $\HH^N$ is $M^2/N$, which is a minimum over all unit norm frames.
\end{proposition}
See \cite{frame_potential,frames_for_undergraduates} for a proof of the above result.

\begin{theorem}\label{entf}
        If $\{\phi_i\}_{i=1}^M$ is a unit norm frame for $\HH^N$, then the upper Riesz bound of $\{\phi_i\phi_i^*\}_{i=1}^M$ is at least $M/N$. Moreover, we have equality if and only if $\{\phi_i\}_{i=1}^M$ is a unit norm tight frame.
\end{theorem}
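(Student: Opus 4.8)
The plan is to identify the upper Riesz bound of $\{\phi_i\phi_i^*\}_{i=1}^M$ with the largest eigenvalue $\lambda_{max}(G)$ of its Gram matrix $G=[|\ip{\phi_i}{\phi_j}|^2]$, which is precisely what the preceding Gram matrix theorem supplies, and then to connect the entries of $G$ to the frame potential. The key observation is that the sum of all entries of $G$ equals $\mathrm{FP}(\{\phi_i\}_{i=1}^M)$, so that $1_M^* G 1_M = \mathrm{FP}(\{\phi_i\}_{i=1}^M)$, where $1_M$ is the all-ones vector.

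With this in hand the lower bound falls out of two standard inequalities chained together. First, applying the Rayleigh quotient characterization of $\lambda_{max}(G)$ to the test vector $1_M$ gives
\[ \lambda_{max}(G) \ge \frac{1_M^* G 1_M}{1_M^* 1_M} = \frac{\mathrm{FP}(\{\phi_i\}_{i=1}^M)}{M}. \]
Second, the frame potential proposition stated above guarantees $\mathrm{FP}(\{\phi_i\}_{i=1}^M) \ge M^2/N$ for any unit norm frame. Combining these yields $\lambda_{max}(G) \ge (M^2/N)/M = M/N$, the claimed bound.

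For the equality characterization I would argue the two directions separately. If the upper Riesz bound equals $M/N$, then $M/N = \lambda_{max}(G) \ge \mathrm{FP}/M \ge M/N$ forces $\mathrm{FP} = M^2/N$; by the frame potential proposition this holds only for a unit norm tight frame, giving the ``only if'' direction at once. Conversely, suppose $\{\phi_i\}_{i=1}^M$ is a unit norm tight frame, so its frame operator is $S=(M/N)I_N$. Using Lemma~\ref{pc2}, the $i$-th row sum of $G$ is
\[ \sum_{j=1}^M |\ip{\phi_i}{\phi_j}|^2 = \ip{\phi_i\phi_i^*}{\sum_{j=1}^M \phi_j\phi_j^*}_F = \ip{\phi_i\phi_i^*}{S}_F = \frac{M}{N}\,\tr(\phi_i\phi_i^*) = \frac{M}{N}, \]
so that $G 1_M = (M/N)1_M$ and $M/N$ is an eigenvalue of $G$.

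The main obstacle is the final step of the converse: knowing that $M/N$ is an eigenvalue is not enough, since I must rule out a strictly larger one. Here I would exploit that $G$ has non-negative entries and constant row sum $M/N$: by the Perron--Frobenius bound (equivalently, since the induced $\infty$-norm of a non-negative matrix is its maximal row sum, and the spectral radius of the symmetric positive semidefinite $G$ equals $\lambda_{max}(G)$), we get $\lambda_{max}(G) \le M/N$. Together with $\lambda_{max}(G) \ge M/N$ this pins the upper Riesz bound at exactly $M/N$, completing the ``if'' direction. A Gershgorin-disc argument using $G[i,i]=\|\phi_i\|^4=1$ and row sum $M/N$ gives the same conclusion and would serve as an alternative.
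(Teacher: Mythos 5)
Your proof is correct, and its core coincides with the paper's: both establish $\lambda_{max}(G)\ge M/N$ by testing $G$ against the all-ones direction and invoking the frame potential bound $\mathrm{FP}(\{\phi_i\}_{i=1}^M)\ge M^2/N$ (the paper routes this through $\frac1M\,\mathrm{FP}=\frac1M\|G1_M\|_{\ell_1}\le\frac{1}{\sqrt M}\|G1_M\|_{\ell_2}\le\|G\|$, which is equivalent to your Rayleigh quotient $1_M^*G1_M/M\le\lambda_{max}(G)$), and both obtain the ``only if'' direction by observing that equality forces $\mathrm{FP}=M^2/N$, hence tightness. Where you genuinely improve on the paper is the ``if'' direction. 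The paper computes, as you do, that for a unit norm tight frame every row sum of $G$ equals $M/N$, so $\frac{1}{\sqrt M}1_M$ is a unit eigenvector with eigenvalue $M/N$; but its chain then simply asserts $\left\|G\left(\tfrac{1}{\sqrt M},\dots,\tfrac{1}{\sqrt M}\right)\right\|_{\ell_2}=\|G\|$, i.e.\ that the operator norm is attained at this particular vector --- which is exactly the claim at issue (that $M/N$ is the \emph{largest} eigenvalue) and is left unjustified. Your Perron--Frobenius step --- the spectral radius of the entrywise non-negative symmetric matrix $G$ is at most its maximal absolute row sum $M/N$, so $\lambda_{max}(G)\le M/N$ --- or equally your Gershgorin variant using $G[i,i]=1$, supplies precisely the missing argument. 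So: same skeleton throughout, but your treatment of the converse is more complete and rigorous than the paper's own.
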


\begin{proof}
If $\{\phi_i\}_{i=1}^M$ is a unit norm frame whose outer products have Gram matrix $G.$ Then
\begin{align*}
        \frac{M}{N} &\leq \frac{1}{M}\mathrm{FP}(\{\phi_i\}_{i=1}^M)\\
        &= \frac{1}{M} \left\|\left(\sum_{i=1}^M |\ip{\phi_i}{\phi_j}|^2\right)_{j=1}^M\right\|_{\ell_1}\\
        &\le \frac{1}{M} \sqrt{M}\left\|\left(\sum_{i=1}^M |\ip{\phi_i}{\phi_j}|^2\right)_{j=1}^M\right\|_{\ell_2}\\
        &= \left\|\left(\frac{1}{\sqrt{M}}\sum_{i=1}^M |\ip{\phi_i}{\phi_j}|^2\right)_{j=1}^M\right\|_{\ell_2}\\
        &=  \left\|G\left(\frac{1}{\sqrt{M}},\ldots, \frac{1}{\sqrt{M}}\right)^T\right\|_{\ell_2}\\
        &\le \|G\|\\
        & = \lambda_1
\end{align*}
where $\lambda_1$ is the largest eigenvalue of $G$. 

For the moreover part, if $\lambda_1=\frac{M}{N}$ then we have that $\frac{M^2}{N}=FP(\{\phi_i\}_{i=1}^M)$ so that $\{\phi_i\}_{i=1}^M$ is a unit norm tight frame. If on the other hand we have that $\{\phi_i\}_{i=1}^M$ is a unit norm tight frame, then 
\begin{align*}
\frac{M}{N} &= \frac{1}{M}FP(\{\phi_i\}_{i=1}^M)\\
        &= \frac{1}{M} \left\|\left(\sum_{i=1}^M |\ip{\phi_i}{\phi_j}|^2\right)_{j=1}^M\right\|_{\ell_1}\\
        &= \frac{1}{M} \left\|\left(\frac{M}{N},\dots,\frac{M}{N}\right)\right\|_{\ell_1}\\
        &= \frac{1}{M} \sqrt{M}\left\|\left(\frac{M}{N},\dots,\frac{M}{N}\right)\right\|_{\ell_2}\\
        &= \left\| \frac{1}{\sqrt{M}}\left(\frac{M}{N},\dots,\frac{M}{N} \right) \right\|_{\ell_2}\\
        &=  \left\|G\left(\frac{1}{\sqrt{M}},\dots, \frac{1}{\sqrt{M}}\right)\right\|_{\ell_2}\\
        &= \|G\|\\
        & = \lambda_1.
\end{align*}
\end{proof}

Now we will compute the optimal lower Riesz bounds
for outer product frames.

\begin{theorem}
        If $\{\phi_i\}_{i=1}^M$ is a unit norm frame for $\HH^N$, then the lower Riesz bound of $\{\phi_i\phi_i^*\}_{i=1}^M$ is at most $\displaystyle\frac{M(N-1)}{N(M-1)}$.
\end{theorem}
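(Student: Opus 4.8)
The lower Riesz bound in question is, by the Gram-matrix computation above, the smallest eigenvalue $\lambda_{\min}(G)$ of $G=[|\ip{\phi_i}{\phi_j}|^2]$; when the outer products are dependent it is $0$ and the asserted inequality is vacuous, so the real content is to bound $\lambda_{\min}(G)$ from above. Note that $G$ is a real symmetric positive semidefinite matrix (it is the Gram matrix of the vectorized outer products), so $\lambda_{\min}(G)=\min_{x\in\RR^M,\,\|x\|=1}x^TGx$. The plan is to produce a good test subspace and bound the minimum by an average. Since the minimum over the unit sphere is at most the minimum over the unit sphere of any subspace, which is in turn at most the \emph{average} of $x^TGx$ over any orthonormal basis of that subspace, I would choose the hyperplane $H=1_M^\perp$ of dimension $M-1$ as the test subspace.

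The computation then proceeds by evaluating that average via the trace. First I would complete an orthonormal basis $\{e_1,\dots,e_{M-1}\}$ of $H$ to an orthonormal basis of $\HH^M$ by adjoining $1_M/\sqrt{M}$. Because the trace is basis-independent,
\[
\sum_{k=1}^{M-1}e_k^TGe_k=\tr(G)-\frac{1}{M}\,1_M^TG\,1_M .
\]
Now I would identify the two scalar quantities appearing here: since the $\phi_i$ are unit norm, the diagonal of $G$ is identically $1$, so $\tr(G)=M$; and $1_M^TG\,1_M=\sum_{i,j}|\ip{\phi_i}{\phi_j}|^2=\mathrm{FP}(\{\phi_i\}_{i=1}^M)$ is exactly the frame potential. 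Invoking the frame potential bound stated above, namely $\mathrm{FP}(\{\phi_i\}_{i=1}^M)\ge M^2/N$, I obtain
\[
\lambda_{\min}(G)\le\frac{1}{M-1}\sum_{k=1}^{M-1}e_k^TGe_k
=\frac{1}{M-1}\Bigl(M-\frac{1}{M}\,\mathrm{FP}\Bigr)
\le\frac{1}{M-1}\Bigl(M-\frac{M}{N}\Bigr)
=\frac{M(N-1)}{N(M-1)},
\]
which is the claimed bound.

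I do not anticipate a serious obstacle here; the only genuine insight is choosing the test subspace $H=1_M^\perp$, which is precisely what makes the averaged quadratic form computable in closed form through $\tr(G)$ and the frame potential. The one point requiring a little care is the complex case, where $x^\ast Gx$ carries conjugates; but since $G$ has real entries and is symmetric positive semidefinite, its smallest eigenvalue is already realized on real test vectors, so restricting to $x\in\RR^M$ loses nothing and the argument above applies verbatim. As a sanity check, when $M=N$ (e.g.\ an orthonormal basis) the bound reads $M(N-1)/\bigl(N(M-1)\bigr)=1$, consistent with the fact that orthonormal vectors yield orthonormal outer products.
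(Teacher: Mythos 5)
Your proof is correct and is essentially the paper's argument in light disguise: the paper uses $\mathrm{Tr}(G)=M$ to get $\lambda_M \le \frac{1}{M-1}\sum_{i=2}^M\lambda_i = \frac{M-\lambda_1}{M-1}$ and then invokes $\lambda_1 \ge M/N$ from Theorem~\ref{entf}, whereas you average the quadratic form over an orthonormal basis of $1_M^{\perp}$ instead of over the bottom $M-1$ eigenvectors, which amounts to replacing $\lambda_1$ by the Rayleigh quotient $\frac{1}{M}1_M^*G1_M = \mathrm{FP}/M$ and citing the frame potential bound $\mathrm{FP}\ge M^2/N$ directly. Since $\mathrm{FP}/M \le \lambda_1$, your intermediate estimate is (harmlessly) weaker than the paper's, and the two routes coincide in substance --- the paper's proof of Theorem~\ref{entf} tests $G$ against the very same vector $\left(\frac{1}{\sqrt{M}},\dots,\frac{1}{\sqrt{M}}\right)^T$ --- with yours having the minor advantage of bypassing Theorem~\ref{entf} entirely.
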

\begin{proof}
        Let $G$ be the Gram matrix of $\{\phi_i\phi_i^*\}_{i=1}^M$ with eigenvalues $\lambda_1\geq \lambda_2 \geq \cdots\geq \lambda_M$. Then
        $\mathrm{Tr}(G)=M$ gives 
        \[\sum_{i=2}^M\lambda_i=M-\lambda_1,\]
 Also,
 \[ (M-1)\lambda_M \le \sum_{i=2}^M \lambda_i,\]
 and so
        \[\lambda_{M}\leq \frac{\sum_{i=2}^M\lambda_i}{M-1}.\]
        Finally, we have
        
        \[\lambda_M \leq\frac{M-\lambda_1}{M-1}
        \leq \frac{M-\frac{M}{N}}{M-1}
        =\frac{M(N-1)}{N(M-1)}.\]
\end{proof}
        In the next theorem, we see that the above bounds are sharp. 
\begin{theorem}\label{Equiangular}
        Let $\{\phi_i\}_{i=1}^M$ be a unit norm equiangular
frame for $\HH^N$ with $M>N$ and let $c:=|\ip{\phi_i}{\phi_j}|^2$ for $i\neq j$. Then $\{\phi_i\phi_i^*\}_{i=1}^M$ is a Riesz sequence whose
Gram matrix has two distinct eigenvalues, both of which are
non-zero:
\[\lambda_1=1+(M-1)c\mbox{ and } \lambda_i = 1-c
\mbox{ for all }i=2,3,\ldots,M.\] 
Moreover, if $\{\phi_i\}_{i=1}^M$ is also a tight frame, then $c=\frac{M-N}{N(M-1)}$ and 
$\{\phi_i\phi_i^*\}_{i=1}^M$ is a Riesz sequence        
with Riesz bounds $\frac{M(N-1)}{N(M-1)}$, $\frac{M}{N}$.
\end{theorem}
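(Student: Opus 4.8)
The plan is to compute the Gram matrix $G$ of the sequence $\{\phi_i\phi_i^*\}_{i=1}^M$ explicitly and then diagonalize it by exploiting the rigid structure forced by equiangularity. By the Gram matrix theorem for outer products, $G[i,j]=|\ip{\phi_i}{\phi_j}|^2$. Unit norm gives $G[i,i]=1$, and equiangularity gives $G[i,j]=c$ for every $i\neq j$. Hence $G$ has the closed form $G=(1-c)I_M+c\,J_M$, where $J_M$ denotes the $M\times M$ matrix of all ones. This structural reduction is the heart of the argument; everything afterward is spectral bookkeeping.

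Next I would diagonalize $G$ by diagonalizing $J_M$. The matrix $J_M$ has eigenvalue $M$ on the span of $1_M$ and eigenvalue $0$ with multiplicity $M-1$ on the orthogonal complement $1_M^\perp$. Since $G$ is an affine function of $J_M$, its spectrum is read off immediately: the value $1+(M-1)c$ on $1_M$, and $1-c$ with multiplicity $M-1$. This gives the asserted eigenvalues $\lambda_1=1+(M-1)c$ and $\lambda_i=1-c$ for $i\geq 2$.

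To conclude that the outer products form a Riesz sequence I must verify that both eigenvalues are strictly positive, since then $G$ is positive definite and, by the Gram matrix theorem, the extreme eigenvalues $1-c$ and $1+(M-1)c$ are exactly the optimal lower and upper Riesz bounds. Positivity of $1+(M-1)c$ is automatic from $c\geq 0$. The delicate point---and the one genuine obstacle---is the strict inequality $c<1$, i.e. $1-c>0$. I would argue this from $M>N$ together with $N\geq 2$: a frame spans $\HH^N$, so its vectors cannot all be scalar multiples of one vector, which forces $|\ip{\phi_i}{\phi_j}|<1$ for some pair and hence, by equiangularity, for every pair. One should note this genuinely requires $N\geq 2$, since in dimension one all induced projections coincide and the conclusion fails.

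Finally, for the tightness refinement I would compute the frame potential two ways and equate them. Directly from the definition, the diagonal contributes $M$ and the $M(M-1)$ off-diagonal pairs each contribute $c$, so $\mathrm{FP}(\{\phi_i\}_{i=1}^M)=M+M(M-1)c$; and for a unit norm tight frame the earlier proposition gives $\mathrm{FP}=M^2/N$. Solving $M+M(M-1)c=M^2/N$ yields $c=\frac{M-N}{N(M-1)}$. Substituting into the two eigenvalues from the diagonalization gives $\lambda_1=1+(M-1)c=\frac{M}{N}$ and, after a short simplification of the numerator, $1-c=\frac{M(N-1)}{N(M-1)}$, which are precisely the claimed Riesz bounds. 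Since these coincide with the extremal values from Theorem~\ref{entf} and the preceding theorem, this also confirms that equiangular tight frames realize the optimal bounds.
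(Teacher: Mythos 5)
Your proposal is correct, and it rests on exactly the paper's central reduction: both arguments observe $G[i,j]=|\ip{\phi_i}{\phi_j}|^2$ and write the outer-product Gram matrix as $G=(1-c)I_M+c\,1_M1_M^*$. The differences are minor but real. Where the paper extracts the spectrum by running Sylvester's determinant theorem on the characteristic polynomial, you diagonalize the all-ones matrix directly (eigenvalue $M$ on $\mathrm{span}\,\{1_M\}$, eigenvalue $0$ on $1_M^{\perp}$, and $G$ an affine function of $J_M$), which is more elementary and dispenses with the determinant identity entirely. In the ``moreover'' part you prove something the paper merely asserts: the paper substitutes $c=\frac{M-N}{N(M-1)}$ without derivation, whereas you obtain it by equating the directly computed frame potential $M+M(M-1)c$ with the tight-frame value $M^2/N$ from the paper's earlier proposition --- a genuine addition. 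Your treatment of $c<1$ (Cauchy--Schwarz equality would force all vectors parallel, contradicting spanning when $N\geq 2$) matches the paper's $c=1$ case, and your remark that the statement silently requires $N\geq 2$ is a legitimate point the paper glosses over. One small omission: the theorem claims the Gram matrix has \emph{two distinct} nonzero eigenvalues, so you must also rule out $c=0$; the paper does so by noting that $c=0$ would make $\{\phi_i\phi_i^*\}_{i=1}^M$, and hence $\{\phi_i\}_{i=1}^M$ (Proposition~\ref{orthorth}), orthonormal, which is impossible with $M>N$ vectors in $\HH^N$. Your write-up only establishes $0\leq c<1$, which suffices for positive definiteness and the Riesz bounds but not for distinctness of the eigenvalues; the fix is one sentence.
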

Before proving the above result, we need a well known theorem (see e.g. \cite{matrix_analysis}).
\begin{theorem}[Sylvester's Determinant Theorem]
Let $S$ and $T$ be matrices of size $M\times N$ and $N\times M$ respectively. Then
\[\det (I_M+ST)=\det(I_N+TS).\]
\end{theorem}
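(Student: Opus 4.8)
The plan is to prove the identity by evaluating the determinant of a single $(M+N)\times(M+N)$ block matrix in two different ways. The only tools needed are the multiplicativity of the determinant and the fact that a block triangular matrix has determinant equal to the product of the determinants of its diagonal blocks. First I would introduce
\[
B = \begin{pmatrix} I_M & -S \\ T & I_N \end{pmatrix},
\]
and then exhibit two block $LU$-type factorizations of $B$, one peeling off $I_M + ST$ and the other peeling off $I_N + TS$.

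For the first factorization I would write
\[
B = \begin{pmatrix} I_M & 0 \\ T & I_N \end{pmatrix}\begin{pmatrix} I_M & -S \\ 0 & I_N + TS \end{pmatrix},
\]
which is checked by direct block multiplication; the only nontrivial entry is the bottom-right block, which comes out to $-TS + (I_N + TS) = I_N$. The first factor is lower block triangular with determinant $1$, and the second is upper block triangular with determinant $\det(I_N + TS)$, so multiplicativity gives $\det B = \det(I_N + TS)$.

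For the complementary factorization I would write
\[
B = \begin{pmatrix} I_M & -S \\ 0 & I_N \end{pmatrix}\begin{pmatrix} I_M + ST & 0 \\ T & I_N \end{pmatrix},
\]
where now the nontrivial top-left block equals $(I_M + ST) - ST = I_M$. Here the first factor is upper block triangular with determinant $1$ and the second is lower block triangular with determinant $\det(I_M + ST)$, so $\det B = \det(I_M + ST)$. Equating the two evaluations of $\det B$ yields $\det(I_M + ST) = \det(I_N + TS)$.

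The argument is essentially routine, so there is no serious obstacle; the only thing requiring care is the bookkeeping of signs and factor order, so that both products genuinely reproduce the same matrix $B$, and confirming that each factor is block triangular so the determinant splits as claimed. For context I would remark that one could instead invoke the standard fact that $ST$ and $TS$ have the same nonzero eigenvalues with the same multiplicities, whence $\det(I_M + ST)$ and $\det(I_N + TS)$ agree up to factors of $1$ contributed by the extra zero eigenvalues on the larger side; but the block-factorization route is cleaner and avoids any appeal to eigenvalue multiplicity or Jordan form.
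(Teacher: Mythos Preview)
Your proof is correct; the two block factorizations check out entry by entry, and the determinant identities for block triangular matrices give exactly what you need. Note, however, that the paper does not actually prove Sylvester's Determinant Theorem: it is stated as a well-known result with a citation to \cite{matrix_analysis} and then invoked in the proof of Theorem~\ref{Equiangular}. So there is no ``paper's own proof'' to compare against---you have supplied a self-contained argument where the paper simply appeals to the literature.
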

\begin{proof}[Proof of Theorem~\ref{Equiangular}]
        Let $G$ be the Gram matrix for $\{\phi_i\phi_i^*\}_{i=1}^M$. Then
        \[G[i,j]=\left\{\begin{array}{ll}
        1 & \text{ if } i=j\\
        c & \text{ otherwise}
        \end{array}\right.\]
        Then we can write $G=(1-c)I_M+c1_{M}1_{M}^*$ and expand using Sylvester's determinant theorem with $S=1_M$ and $T=1_M^*$:
        \begin{align*}
                \det\left((1-c)I_M+c1_{M}1_{M}^* -\lambda I\right) & = \det\left((1-c-\lambda)I_M+c1_{M}1_{M}^*\right)\\
                &=(1-c-\lambda)^M\det\left(I_M+\frac{c}{1-c-\lambda}1_{M}1_{M}^*\right)\\
&=(1-c-\lambda)^M\det\left(I_1+\frac{c}{1-c-\lambda}1_{M}^*1_{M}\right)\\
                &=(1-c-\lambda)^{M-1}(1-c-\lambda+cM).
        \end{align*}
Setting the above equal to zero and solving for $\lambda$ we get the solutions $\lambda = 1-c$ occurring $(M-1)$-times and $\lambda=1+(M-1)c$ occurring once. 

If $c=0$, then $\{\phi_i\phi_i^*\}_{i=1}^M$ are orthonormal and hence so are $\{\phi_i\}_{i=1}^M$ contradicting the assumption that $M>N$. If $c=1$ then $\phi_i=\alpha_{ij}\phi_j$ with $|\alpha_{ij}|=1$ for all $i$ and $j$ contradicting the fact that this is a frame. Hence, $0<c<1$ and the outer products are Riesz. 

For the ``moreover'' part, we compute:
        \[1-c=1-\frac{M-N}{N(M-1)}=\frac{NM-N-M+N}{N(M-1)}=\frac{M(N-1)}{N(M-1)}\]
        and
        \[1+(M-1)c=1+(M-1)\frac{M-N}{N(M-1)}=\frac{N+M-N}{N}=\frac{M}{N}.\]
        
\end{proof}
        We can think of equiangular tight frames as minimizers of the the quantity $B-A$ where $A$ and $B$ are the Riesz bounds of the induced outer products. One problem is that there are few equiangular tight frames. If we want to produce an outer product sequence with arbitrary size and dimension and have predictably good bounds, we cannot use equiangular tight frames. At this time we do not know if there are other frames which achieve the optimal bounds above.

\section{Concrete Constructions of Riesz Bases of Outer Products}
 
Up to now, we have provided no concrete constructions of Riesz outer product sequences. We rectify this with the following examples.
\begin{example}\label{example_eiej}
Let $\{e_i\}_{i=1}^N$ be an orthonormal basis for $\RR^N$ and define $\{E_{ij}\}$ as follows
\[E_{ij}=\left\{\begin{array}{ll}
e_i & \text{ if } i=j \\
\frac{1}{\sqrt{2}}(e_i+e_j)& \text{ if } j>i
\end{array}\right.\]
for $i=1,\cdots,N$ and $i\leq j$. Then $\{E_{ij}E_{ij}^*\}$ is a Riesz basis for the space of symmetric operators in $\mathrm{sym}(\RR^{N\times N})$.
\end{example}
\begin{proof}
This follows immediately from Proposition~\ref{sparsity}.
\end{proof}
The following example provides an extension of the above to the complex case. It also provides a second (more intuitive) method of verifying that the above example is independent.

        \begin{example}\label{complex_eiej}
        Take $E_{ij}$ as before, and add the following 
        \[E_{ij}'=\frac{1}{2}(e_i+\sqrt{-1}e_j)(e_i+\sqrt{-1}e_j)^*\]
        for $j>i$. Then the resulting sequence is Riesz.
        \end{example}
        \begin{proof}
                Note that $E_{ij}'$ is a matrix with $1$ in the $(i,i)$ and $(j,j)$ entry and $-\sqrt{-1}$ in the $(i,j)$ entry and $\sqrt{-1}$ in the $(j,i)$ entry. 
                Then we know that $\sum_{i,j} a_{ij}E_{ij}+\sum_{i,j}a_{ij}'E_{ij}'=0$ if and only if the real and complex parts are $0$. We will do the real part and the complex part will follow immediately.
                $E_{ij}$ with $i\neq j$ is the square matrix with $1$'s in the $(i,i),(i,j),(j,i),$ and $(j,j)$ entry. Specifically, it is the only element in the sum for which the entries $(i,j)$ and $(j,i)$ could possibly be non-zero. Hence $a_{ij}=0$ for all $i\neq j$. The remaining terms $E_{ii}$ are orthonormal and hence $a_{ii}=0$ for all $i$. Thus the real part is independent and the complex part follows by the same argument.
        \end{proof}
        
        We know that the optimal Riesz bounds for a Riesz basis of outer products are $(N+1)/(N+2)$ and $(N+1)/2$. Using unit norm tight frames we can always achieve the upper bound. The lower bound is then the problem. Here we give a class of unit norm tight frames which produce nice lower bounds as well.

        \begin{example}\label{example_biangular}
        Let $\{\phi_i\}_{i=1}^{N+1}$ be the usual simplex 
        equiangular tight frame
         for $\RR^N$. Then consider the outer products
        \[\Phi_{ij}=\left(\frac{\phi_i+\phi_j}{\|\phi_i+\phi_j\|}\right)\left(\frac{\phi_i+\phi_j}{\|\phi_i+\phi_j\|}\right)^*\]
        for $j>i$. Then
        $\Phi_{ij}$ is Riesz provided $N\neq 3$ and has Riesz bounds $\frac{1}{2}$ and $\frac{N+1}{2}$ for $N\geq 7$.
        \end{example}
        \begin{proof}
        Barg et al. showed in \cite{kasso} that the frame 
        \[\frac{\phi_i+\phi_j}{\|\phi_i+\phi_j\|}\]
        is a unit norm tight
frame.  Hence by Theorem~\ref{entf} the upper Riesz bound of the induced outer products is
        \[\frac{N(N+1)}{2}\frac{1}{N}=\frac{N+1}{2}.\]

For the lower bound, we can consider the simplex in $\RR^N$ as  $\left\{\frac{Pe_i}{\|Pe_i\|}\right\}_{i=1}^{N+1}$ for where $\{e_i\}_{i=1}^{N+1}$ is an orthonormal basis for $\RR^{N+1}$, $P=I_{N+1}-ff^*$, and $f=\frac{1}{\sqrt{N+1}}\sum_{i=1}^{N+1}e_i$. Then we have
\begin{align*}
        \phi_i&=\frac{Pe_i}{\|Pe_i\|}\\
        &=\sqrt{\frac{N+1}{N}}\left(-\frac{1}{N+1},\dots,-\frac{1}{N+1},1-\frac{1}{N+1},-\frac{1}{N+1},\dots ,-\frac{1}{N+1}\right)
        \end{align*}
        
        and
        \begin{align*}
        \ip{\phi_i}{\phi_j}&=\frac{N+1}{N}\left(\frac{N-1}{(N+1)^2}-\frac{2}{N+1}\left(1-\frac{1}{N+1}\right)\right)\\
                &=-\frac{1}{N}.
        \end{align*}
        Now, $\|\phi_i+\phi_j\|^2=2\frac{N-1}{N}$ for $i\neq j$ and so we can compute the the Gram matrix of $\{\Phi_{ij}\}_{ij}$,
        \[G_\Phi[ij,kl]=\ip{\Phi_{ij}}{\Phi_{kl}}=\left\{\begin{array}{ll}
        1 & \text{ if } i=j \text{ and } k=l\\
        \frac{(N-3)^2}{4(N-1)^2} & \text{ if } i=k \text{ or } i=l \text{ or } j =k \text{ or } j=l \\
        \frac{4}{(N-1)^2} & \text{ if no indices are equal} 
        \end{array}\right..\]
        Consider the collection of unit norm vectors
        \[E_{ij}=\frac{1}{2}(e_i+e_j)(e_i+e_j)^* \text{ for } j>i\]
        and $\{e_i\}_{i=1}^{N+1}$ is an orthonormal basis for $\RR^{N+1}$. 
        Now its Gram matrix is
        \[G_E[ij,kl]=\left\{\begin{array}{ll}
        1 & \text{ if } i=j \text{ and } k=l\\
        \frac{1}{4} & \text{ if either } i=k \text{ or } i=l \text{ or } j =k \text{ or } j=l \\
        0 & \text{ if no indices are equal} 
        \end{array}\right..\]

        This gives us the decomposition 
        \begin{align*}
        G_\Phi&=\left(1-4\left(\frac{(N-3)^2}{4(N-1)^2}\right)\right)I_{N(N+1)/2}\\
        &\quad+4\left(\frac{(N-3)^2}{4(N-1)^2}-\frac{4}{(N-1)^2}\right)G_E\\
        &\quad+\frac{4}{(N-1)^2} 1_{N(N+1)/2}1_{N(N+1)/2}^*.
        \end{align*}
        Some inequalities,
        \[\frac{(N-3)^2}{4(N-1)^2}-\frac{4}{(N-1)^2}\geq 0\]
        if $N\geq 7$
        and
        \[1-4\left(\frac{(N-3)^2}{4(N-1)^2}\right)> 0\]
        if $N>2$. 
        The matrices $\left(1-\frac{(N-3)^2}{(N-1)^2}\right)I_{N(N+1)/2}$ and  $4\left(\frac{(N-3)^2}{4(N-1)^2}-\frac{4}{(N-1)^2}\right)G_E$ are positive-definite and $\frac{4}{(N-1)^2} 1_{N(N+1)/2}1_{N(N+1)/2}^*.$ is positive-semidefinite so
        \begin{align}
        \nonumber\lambda_{min}[G_\Phi]
        &\geq\lambda_{min}\left[\left(1-\frac{(N-3)^2}{(N-1)^2}\right)I_{N(N+1)/2}\right]\\
        \nonumber&\quad+\lambda_{min}\left[4\left(\frac{(N-3)^2}{4(N-1)^2}-\frac{4}{(N-1)^2}\right)G_E\right]\\
        \nonumber&\quad+\lambda_{min}\left[\frac{4}{(N-1)^2} 1_{N(N+1)/2}1_{N(N+1)/2}^*\right]\\
        \nonumber&=\left(1-\frac{(N-3)^2}{(N-1)^2}\right)\\
         &\quad+4\left(\frac{(N-3)^2}{4(N-1)^2}-\frac{4}{(N-1)^2}\right)\lambda_{min}\left[G_E\right]+0 \label{equation_blahblahblah}
        \end{align}
        We need to know $\lambda_{min}(G_E)$. 
        
        As in Example~\ref{example_eiej}, we will break up the sum. Let $E_{ij}=\frac{1}{2}(e_i+e_j)(e_i+e_j)^*$. Then 
        \begin{align*}
        \left\|\sum_{i=1}^{N+1}\sum_{j>i}a_{ij}E_{ij}\right\|^2&=\frac{1}{4}\sum_{i=1}^{N+1}\left[\abs{\sum_{j>i}a_{ij}+\sum_{j<i}a_{ji}}^2+2\sum_{j>i}\abs{a_{ij}}^2\right]\\
        &\geq \frac{1}{2}\sum_{j>i}\abs{a_{ij}}^2\\
        &=\frac{1}{2}
        \end{align*}
        for $a_{ij}$ which square sum to $1$.
        
        Then \eqref{equation_blahblahblah} becomes
        \[1-\frac{(N-3)^2}{4(N-1)^2}+2\left(\frac{(N-3)^2}{4(N-1)^2}-\frac{4}{(N-1)^2}\right)=\frac{N^2+2N-23}{2(N-1)^2}\geq \frac{1}{2}\]
        for $N\geq 6$. 
        
        Since these inequalities only hold for $N\geq 7$, we have computed the lower Riesz bounds for $N=2,3,\dots,6$ manually:
        \begin{center}
        \begin{tabular}{|l|l|}
        
        \hline $N$ & lower bound\\\hline\hline
                        $2$ &   $3/4$\\  \hline
                        $3$ &   $0$ \\\hline
                        $4$ &   $5/36$\\\hline
                        $5$ &   $3/8$\\\hline
                        $6$ &   $63/100$\\\hline
        \end{tabular}
        \end{center}
\end{proof}
\begin{remark}
        When $N=3$ we get another example of the strangeness of this problem. In this example we get that $\Phi_{14}=\Phi_{23}$ thus producing a dependent sequence.
\end{remark}

\section{Duals of Outer Products}

\begin{lemma}
Given a vector $\phi$ in $\HH^N$ and operators $T_1,T_2$ acting on
$\HH^N$ with $T_2$ symmetric, we have

(1) $T_1(\phi \phi^*)= (T_1\phi)\phi^*$.

(2)  $T_1(\phi\phi^*)T_2 = (T_1\phi)(T_2\phi)^*$.
\end{lemma}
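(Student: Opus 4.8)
The plan is to treat both identities as immediate consequences of the associativity of matrix multiplication together with the standard adjoint rule $(AB)^* = B^*A^*$; no deeper machinery is required, since $\phi\phi^*$ is literally the matrix product of a column vector with a row vector, and $T_1, T_2$ are just matrices acting by multiplication.

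For part (1) I would simply invoke associativity. The outer product $\phi\phi^*$ is the product of the $N\times 1$ matrix $\phi$ with the $1\times N$ matrix $\phi^*$, so $T_1(\phi\phi^*)$ is a product of the three factors $T_1$, $\phi$, $\phi^*$. Regrouping these factors gives $T_1(\phi\phi^*) = (T_1\phi)\phi^*$, which is the entire content of (1).

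For part (2) I would first apply (1) to peel $T_1$ off on the left, obtaining $T_1(\phi\phi^*)T_2 = (T_1\phi)\phi^* T_2$. The only remaining task is to recognize the trailing factor $\phi^* T_2$ as $(T_2\phi)^*$. This is the one and only place where the hypothesis on $T_2$ enters: by the adjoint rule $(T_2\phi)^* = \phi^* T_2^*$, and since $T_2$ is symmetric (self-adjoint) we have $T_2^* = T_2$, whence $\phi^* T_2 = (T_2\phi)^*$. Substituting gives $T_1(\phi\phi^*)T_2 = (T_1\phi)(T_2\phi)^*$, as claimed.

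There is essentially no obstacle here — the argument is pure bookkeeping with matrix products. The only point worth flagging is the role of the symmetry of $T_2$: without it one would obtain only $T_1(\phi\phi^*)T_2 = (T_1\phi)(T_2^*\phi)^*$, so the hypothesis $T_2^* = T_2$ is exactly what collapses the expression into the stated form. I would also remark that, under the convention fixed earlier in the paper (that $S^*$ denotes the transpose in the real case and the Hermitian adjoint in the complex case), ``symmetric'' is read as $T_2^* = T_2$ in both fields, so the same one-line computation covers the real and complex settings simultaneously.
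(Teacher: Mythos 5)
Your proof is correct, and it takes a slightly different route from the paper's. You argue entirely at the level of matrix algebra: part (1) is associativity of the triple product $T_1\,\phi\,\phi^*$, and part (2) follows by peeling off $T_1$ via (1) and then identifying $\phi^*T_2 = \phi^*T_2^* = (T_2\phi)^*$ using the adjoint rule and the hypothesis $T_2^*=T_2$. The paper instead verifies the operator identities pointwise: it evaluates both sides on an arbitrary $x\in\HH^N$, using the rank-one action $(\phi\phi^*)(x)=\langle x,\phi\rangle\phi$, linearity of $T_1$ for (1), and for (2) the self-adjointness of $T_2$ in the form $\langle T_2x,\phi\rangle=\langle x,T_2\phi\rangle$ (it proves $(\phi\phi^*)T_2=\phi(T_2\phi)^*$ and implicitly combines this with (1)). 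The two arguments are logically equivalent and equally elementary, but they make the hypothesis on $T_2$ visible in different places: you use $T_2^*=T_2$ formally inside the adjoint rule, while the paper uses it semantically by moving $T_2$ across the inner product. Your version has the advantage of being one line of bookkeeping with an explicit counterexample-shaped remark showing that without symmetry one only gets $(T_1\phi)(T_2^*\phi)^*$; the paper's version has the advantage of not depending on the matrix realization at all, so it reads verbatim as an operator-theoretic statement. Your observation that, under the paper's stated convention for $S^*$, ``symmetric'' means $T_2^*=T_2$ over both $\RR$ and $\CC$ is exactly the right point to flag — over $\CC$ the identity genuinely requires Hermitian symmetry, not transpose symmetry.
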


\begin{proof}
(1)  We compute for $x \in \HH^N$
\begin{align*}
T_1(\phi\phi^*)(x) &= T_1(\langle x,\phi\rangle \phi)\\
&= \langle x,\phi\rangle T(\phi)\\
&= (T_1\phi)\phi^*(x).
\end{align*}

(2) We compute for $x\in \HH^N$
\begin{align*}
(\phi\phi^*)T_2(x) &= \langle T_2x,\phi\rangle \phi\\
&= \langle x,T_2\phi\rangle \phi\\
&= \phi (T_2\phi)^*(x).
\end{align*}
\end{proof}

\begin{proposition}\label{pc3}
If $\{\phi_i\}_{i=1}^M$ is a Riesz sequence in $\HH^N$ with biorthogonal vectors $\{\tilde{\phi}_i\}_{i=1}^M$, then the biorthogonal vectors for $\{\phi_i\phi_i^*\}_{i=1}^M$ are $\{P\tilde{\phi}_i\tilde{\phi}_i^*\}_{i=1}^M$ where $P$ is the orthogonal projection onto the span of $\{\phi_i\phi_i^*\}_{i=1}^M$.
\end{proposition}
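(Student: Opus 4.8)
The plan is to verify directly that the proposed family $\{P\tilde{\phi}_i\tilde{\phi}_i^*\}_{i=1}^M$ meets the two defining requirements of a biorthogonal system for $\{\phi_i\phi_i^*\}_{i=1}^M$ inside $\mathrm{sym}(\HH^{N\times N})$: that each member lies in $W:=\mathrm{span}\{\phi_j\phi_j^*\}_{j=1}^M$, and that the Frobenius pairings $\langle \phi_i\phi_i^*, P\tilde{\phi}_j\tilde{\phi}_j^*\rangle_F$ equal $\delta_{ij}$. Since $\{\phi_i\phi_i^*\}$ is a basis of $W$ (argued below), these two conditions pin down the biorthogonal family uniquely, so checking them for the candidate finishes the proof.

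First I would record that $\{\phi_i\phi_i^*\}_{i=1}^M$ is linearly independent, so that $W$ is $M$-dimensional and the notion of a biorthogonal system is well posed. A Riesz sequence has positive-definite Gram matrix $G$, and then the Gram matrix $H=G\circ G^T$ of the outer products is positive definite by Theorem~\ref{gram}: taking $A=G$ and $B=G^T$ there gives $\lambda_{\min}(H)\ge[\min_i\|\phi_i\|^2]\,\lambda_{\min}(G)>0$. (This is exactly the mechanism behind Corollary~\ref{riesz_riesz}.) Hence $H$ is nonsingular and the outer products are independent; the orthogonal projection $P$ onto $W$ is then well defined, self-adjoint, and idempotent, with $P(\phi_i\phi_i^*)=\phi_i\phi_i^*$ for every $i$.

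The membership condition is automatic, since $P$ maps into $W$. For the pairing, I would exploit $P=P^*=P^2$ together with $\phi_i\phi_i^*\in W$ to transfer $P$ onto the factor $\phi_i\phi_i^*$, where it acts as the identity:
\[
\langle \phi_i\phi_i^*, P\tilde{\phi}_j\tilde{\phi}_j^*\rangle_F
= \langle P\phi_i\phi_i^*, \tilde{\phi}_j\tilde{\phi}_j^*\rangle_F
= \langle \phi_i\phi_i^*, \tilde{\phi}_j\tilde{\phi}_j^*\rangle_F.
\]
Lemma~\ref{pc2} then evaluates the right-hand side as $|\langle \phi_i,\tilde{\phi}_j\rangle|^2$, and biorthogonality of the original system gives $\langle\phi_i,\tilde{\phi}_j\rangle=\delta_{ij}$, so this equals $|\delta_{ij}|^2=\delta_{ij}$, exactly as required.

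The step requiring the most care is conceptual rather than computational: recognizing why the projection $P$ must appear at all. The naive guess $\tilde{\phi}_j\tilde{\phi}_j^*$ need not lie in $W$, so it cannot serve as the canonical (in-span) biorthogonal element; applying $P$ corrects this \emph{without disturbing the pairing}, precisely because biorthogonality is only tested against the basis elements $\phi_i\phi_i^*\in W$, against which $P$ is invisible by self-adjointness. Once this is seen, the verification collapses to the one-line computation displayed above.
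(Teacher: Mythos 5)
Your proof is correct and follows essentially the same route as the paper's: transfer $P$ by self-adjointness onto $\phi_i\phi_i^*$, where it acts as the identity, then apply Lemma~\ref{pc2} and biorthogonality of $\{\phi_i\}$, $\{\tilde{\phi}_i\}$ to get $\delta_{ij}$. Your added preliminaries (independence of the outer products via Corollary~\ref{riesz_riesz}, hence well-posedness of the biorthogonal system, and the in-span membership check) are sound supplements that the paper leaves implicit, but the core argument is identical.
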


\begin{proof}
We compute:
\[ \ip{\phi_i\phi_i^*}{P\tilde{\phi}_j\tilde{\phi}_j^*}_F = \ip{P\phi_i\phi_i^*}{\tilde{\phi}_j\tilde{\phi}_j^*}_F=
\left|\ip{\phi_i}{ \tilde{\phi}_j}\right|^2 = \delta_{ij}.\]
So the vectors  $\{P\tilde{\phi}_i\tilde{\phi}_i^*\}_{i=1}^M$ are biorthogonal to
$\{\phi_i\phi_i^*\}_{i=1}^M$.  
\end{proof}

\begin{remark}
        Projecting is necessary in the above proposition. For example, take $\{\phi_1,\phi_2\}$ to be a non-orthogonal Riesz basis for $\RR^2$. Then $\tilde{\phi}_1\perp \phi_2$ so take any $\psi_1\perp \phi_2$ with norm $1$ and scale $\tilde{\phi}_1$ so that $\ip{\phi_1}{\tilde{\phi}_1}=1$ i.e. $\tilde{\phi}_1=\frac{1}{\ip{\psi_1}{\phi_1}}\psi_1$. Then the Gram matrix of the induced outer products of $\{\phi_1,\phi_2,\tilde{\phi}_1\}$ 
        is
        \[\begin{bmatrix}
        1 & |\ip{\phi_1}{\phi_2}|^2 & 1\\
        |\ip{\phi_1}{\phi_2}|^2 & 1 & 0\\
        1 & 0 & 1
        \end{bmatrix}\]
        which has determinant $-|\ip{\phi_1}{\phi_2}|^4$. Since we have chosen $\phi_1\not \perp \phi_2$ this matrix is invertible. Hence these outer products are Riesz.  But then $\tilde{\phi}_1\tilde{\phi}_1^*$ is not in the span of the other two. Hence the projections are necessary. 
\end{remark}

\section{Outer Cross-Products of Frames}

        We now turn to a generalization of what we have done so far. Instead of considering $\{\phi_i\phi_i^*\}_{i=1}^M$, we will be examining the set of all rank one matrices obtainable through outer products. Specifically, for collections of vectors $\{\phi_i\}_{i=1}^M$ and $\{\psi_i\}_{i=1}^L$ in $\HH^N$, we will consider the collection $\{\phi_i\psi_j^*\}_{i=1,j=1}^{M\ , \ L}$. One immediate difference is that these outer products are no longer symmetric even if the original sequences are equal. As such, the ambient space is no longer the space of self-adjoint matrices, instead it is the space of all matrices of size $N\times N$. Another interesting aspect of considering such outer products is that the Gram matrix takes the form of another famous product in matrix theory.
        
        \begin{definition}
                Let $S=[s_{ij}]_{i,j}$ and $T$ be matrices of arbitrary size. The \emph{Kronecker product} of $S$ and $T$ is the block matrix
                \[S\otimes T=[s_{ij}T]_{ij}.\]
        \end{definition}
        
        \begin{lemma}
                Let $G_\phi$ and $G_\psi$ to be the Gram matrices of $\{\phi_i\}_{i=1}^M$ and $\{\psi_j\}_{j=1}^L$ respectively. The Gram matrix of $\{\phi_i\psi_j^*\}_{i=1,j=1}^{M\ , \ L}$ is $G_\phi\otimes G_\psi^T$.
        \end{lemma}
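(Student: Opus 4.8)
The plan is to compute the Gram matrix entrywise and then recognize the resulting doubly-indexed array as a Kronecker product. I would index the collection $\{\phi_i\psi_j^*\}$ lexicographically, taking $i\in[M]$ as the block index and $j\in[L]$ as the index within each block, so that the Gram matrix $G$ has entries $G[(i,j),(k,l)]=\ip{\phi_i\psi_j^*}{\phi_k\psi_l^*}_F$. Everything then reduces to evaluating this single Frobenius inner product; it is the two-sided analogue of Lemma~\ref{pc2}, and once it is in hand the identification with $G_\phi\otimes G_\psi^T$ is just a matter of reading off indices.

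First I would expand the Frobenius product using $\ip{S}{T}_F=\mathrm{Tr}(S^*T)$ together with $(\phi_i\psi_j^*)^*=\psi_j\phi_i^*$, which gives
\[
\ip{\phi_i\psi_j^*}{\phi_k\psi_l^*}_F=\mathrm{Tr}(\psi_j\phi_i^*\phi_k\psi_l^*).
\]
Next I would observe that $\phi_i^*\phi_k=\ip{\phi_k}{\phi_i}$ is a scalar and pull it out of the trace, then use cyclicity of the trace to collapse $\mathrm{Tr}(\psi_j\psi_l^*)=\ip{\psi_j}{\psi_l}$, exactly as in the proof of Lemma~\ref{pc2}. This yields the clean factorization
\[
\ip{\phi_i\psi_j^*}{\phi_k\psi_l^*}_F=\ip{\phi_k}{\phi_i}\,\ip{\psi_j}{\psi_l},
\]
a product of one scalar inner product among the $\phi$'s and one among the $\psi$'s.

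Finally I would match this factored entry to the Kronecker product. With the Gram matrix conventions $G_\phi[i,k]=\ip{\phi_k}{\phi_i}$ and $G_\psi[l,j]=\ip{\psi_j}{\psi_l}$, the factorization reads $G[(i,j),(k,l)]=G_\phi[i,k]\,G_\psi^T[j,l]$, which is precisely the $((i,j),(k,l))$ entry of $G_\phi\otimes G_\psi^T$ under the chosen ordering. The one point that warrants care — and essentially the only place the argument is more than mechanical — is the appearance of the transpose on exactly one factor. This is forced by the asymmetry in $\phi_i\psi_j^*$: the left factor $\phi_i$ contributes $\phi_i^*\phi_k$ while the right factor enters through its adjoint as $\psi_j\psi_l^*$, so the two scalar inner products sit in opposite index orders. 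I would therefore spell out the index bookkeeping explicitly, confirming that the transpose lands on $G_\psi$ rather than $G_\phi$ and that the block/within-block ordering of the $\phi_i\psi_j^*$ agrees with the block structure of the Kronecker product.
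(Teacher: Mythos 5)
Your proposal is correct and follows essentially the same route as the paper: both compute $\ip{\phi_i\psi_j^*}{\phi_k\psi_l^*}_F$ directly, factor it into a product of a $\phi$-inner-product and a $\psi$-inner-product, and then read off the lexicographically ordered Gram matrix as $G_\phi\otimes G_\psi^T$. If anything, you are more careful than the paper, which simply asserts the factored identity, whereas you spell out the trace computation and the index bookkeeping that forces the transpose onto $G_\psi$.
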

        \begin{proof}
                First note that
                \[\ip{\phi_i\psi_j^*}{\phi_k\psi_l^*}=\ip{\phi_i}{\phi_k}\ip{\psi_l}{\psi_j}\]
                which means that if we arrange our outer products
                \[\{\phi_1\psi_1^*,\phi_1\psi_2^*,\dots,\phi_1\psi_M^*,\phi_2\psi_1^*,\dots,\phi_M\psi_M^*\}\]
                then the Gram matrix of this collection of vectors is
                \[[\ip{\phi_i}{\phi_k}\ip{\psi_l}{\psi_j}]_{ij,kl}=G_\phi\otimes G_\psi^T.\]
        \end{proof}
        Now we are able to take advantage of another well known result from matrix theory, see \cite{HJ_topics}.
        
        \begin{theorem}
                Let $S$ and $T$ be square matrices with eigenvalues $\{\lambda_i\}_{i=1}^M$ and $\{\nu_i\}_{i=1}^L$ respectively. The eigenvalues of $S\otimes T$ are $\{\lambda_i\nu_j\}_{i=1,j=1}^{M\ , \ L}.$
        \end{theorem}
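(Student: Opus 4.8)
The plan is to prove this via Schur triangularization combined with the mixed-product property of the Kronecker product. The naive approach---taking an eigenvector $x$ of $S$ with $Sx=\lambda x$ and an eigenvector $y$ of $T$ with $Ty=\nu y$, and forming $x\otimes y$, which satisfies $(S\otimes T)(x\otimes y)=\lambda\nu\,(x\otimes y)$---produces the correct eigenvalues but only works cleanly when both matrices are diagonalizable, and it does not by itself certify that every eigenvalue of $S\otimes T$ arises this way or that multiplicities match. So I would instead record first the mixed-product property: for matrices of compatible sizes,
\[(A\otimes B)(C\otimes D)=(AC)\otimes(BD),\]
which follows directly from block-matrix multiplication applied to the definition $A\otimes B=[a_{ij}B]_{ij}$.

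Working over $\CC$ (viewing real matrices as complex, since eigenvalues may be complex in either case), I would next invoke Schur triangularization to write $S=URU^*$ and $T=VQV^*$ with $U,V$ unitary and $R,Q$ upper triangular, so that the diagonal of $R$ lists $\lambda_1,\dots,\lambda_M$ and the diagonal of $Q$ lists $\nu_1,\dots,\nu_L$ (with multiplicity). Applying the mixed-product property twice gives
\[S\otimes T=(URU^*)\otimes(VQV^*)=(U\otimes V)(R\otimes Q)(U\otimes V)^*,\]
and $U\otimes V$ is unitary because $(U\otimes V)(U\otimes V)^*=(UU^*)\otimes(VV^*)=I\otimes I=I$. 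Hence $S\otimes T$ is unitarily similar to $R\otimes Q$, and the two matrices share the same spectrum with multiplicities.

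Finally I would observe that $R\otimes Q$ is itself upper triangular: its $(i,j)$ block equals $R_{ij}Q$, which vanishes below the block diagonal since $R$ is upper triangular, and whose diagonal blocks $R_{ii}Q=\lambda_i Q$ are upper triangular since $Q$ is. The diagonal entries of $R\otimes Q$ are therefore exactly the products $R_{ii}Q_{jj}=\lambda_i\nu_j$, and the eigenvalues of an upper triangular matrix are precisely its diagonal entries. This delivers the claimed spectrum $\{\lambda_i\nu_j\}_{i=1,j=1}^{M,L}$ with the correct total of $ML$ eigenvalues counted with multiplicity.

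The main obstacle is the bookkeeping of multiplicities rather than any single hard computation: the elementary tensor-of-eigenvectors argument leaves open whether all eigenvalues are captured and whether their multiplicities agree. The Schur route sidesteps this entirely by exhibiting $S\otimes T$ as unitarily similar to an explicit upper triangular matrix whose diagonal is manifestly $\{\lambda_i\nu_j\}$, so no separate counting or dimension argument is required.
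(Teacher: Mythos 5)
Your proof is correct and complete. Note, however, that the paper does not actually prove this statement: it is quoted as a well-known fact with a citation to Horn and Johnson's \emph{Topics in Matrix Analysis}, so there is no internal proof to compare against. Your argument is essentially the standard one from that reference (their Theorem 4.2.12 also proceeds by Schur triangularization together with the mixed-product identity $(A\otimes B)(C\otimes D)=(AC)\otimes(BD)$), and you execute it cleanly: the reduction $S\otimes T=(U\otimes V)(R\otimes Q)(U\otimes V)^*$ with $U\otimes V$ unitary, the observation that $R\otimes Q$ is upper triangular with diagonal entries $\lambda_i\nu_j$, and the conclusion that the spectrum, \emph{with multiplicities}, is $\{\lambda_i\nu_j\}$. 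You are also right to flag the defect in the naive tensor-of-eigenvectors argument: it exhibits $\lambda_i\nu_j$ as eigenvalues via $(S\otimes T)(x\otimes y)=\lambda\nu\,(x\otimes y)$ but, for non-diagonalizable $S$ or $T$, does not account for all $ML$ eigenvalues or their multiplicities, whereas the Schur route settles the counting automatically. One minor remark: since the paper works over $\HH\in\{\RR,\CC\}$, your parenthetical passage to $\CC$ for real matrices is exactly the right (and necessary) move, as real matrices need not admit a real Schur form with a genuine triangular factor.
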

        We are ready for the fundamental theorem of outer cross-products.
        \begin{theorem}
        If $\{\phi_i\}_{i=1}^M$ and $\{\psi_j\}_{j=1}^L$ are collections of vectors in
$\HH^N$ which are:
        \begin{enumerate}
\item frames with frame bounds $A,B$ and $C,D$ respectively, then\\ $\{\phi_i\psi_j^*\}_{i=1,j=1}^{M\ , \ L}$ is a frame for
$\HH^{N\times N}$ with frame bounds $AC,BD$.
        \item Riesz sequences with Riesz bounds $A,B$ and $C,D$ then $\{\phi_i\psi_j^*\}_{i=1,j=1}^{M\ , \ L}$ is Riesz sequence
        for $\HH^{N\times N}$ with Riesz bounds $AC,BD$.
\end{enumerate}
\end{theorem}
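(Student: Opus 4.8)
The plan is to read the frame and Riesz bounds straight off the spectrum of the cross-product Gram matrix, using the two structural results just established. By the preceding lemma, the Gram matrix of $\{\phi_i\psi_j^*\}_{i,j}$ is $G_\phi \otimes G_\psi^T$, where $G_\phi$ and $G_\psi$ are the Gram matrices of the two original sequences. Since a matrix and its transpose share the same eigenvalues, $G_\psi^T$ has the same spectrum as $G_\psi$; and by the Kronecker eigenvalue theorem, the eigenvalues of $G_\phi \otimes G_\psi^T$ are exactly the products $\{\lambda_i \nu_j\}$, where $\{\lambda_i\}$ are the eigenvalues of $G_\phi$ and $\{\nu_j\}$ those of $G_\psi$. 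Everything then reduces to locating the extreme (nonzero) products.

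For the Riesz case (2), both sequences are linearly independent, so $G_\phi$ and $G_\psi$ are positive definite and all eigenvalues are strictly positive. By the eigenvalue characterization of Riesz bounds, the eigenvalues of $G_\phi$ lie in $[A,B]$ with both endpoints attained, and those of $G_\psi$ lie in $[C,D]$ with both endpoints attained. Since all four quantities are positive and the product of positive numbers is monotone increasing in each factor, the smallest product eigenvalue is $A\cdot C$ (pairing the two minima) and the largest is $B\cdot D$ (pairing the two maxima). Hence the cross-product Gram matrix is positive definite with extreme eigenvalues $AC$ and $BD$, which by the same characterization are precisely the Riesz bounds of $\{\phi_i\psi_j^*\}_{i,j}$.

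For the frame case (1), the argument is identical except that $G_\phi$ and $G_\psi$ are merely positive semidefinite, of ranks $N$ and $N$ respectively (each sequence spans $\HH^N$). A product eigenvalue $\lambda_i\nu_j$ is nonzero precisely when both factors are nonzero, so the nonzero spectrum of $G_\phi \otimes G_\psi^T$ consists of the products of nonzero eigenvalues of the two factors; its rank is therefore $N\cdot N = N^2 = \dim \HH^{N\times N}$, confirming that $\{\phi_i\psi_j^*\}_{i,j}$ spans the full matrix space and is a frame for it. The nonzero eigenvalues of $G_\phi$ lie in $[A,B]$ and those of $G_\psi$ in $[C,D]$ (endpoints attained), and the same monotonicity argument shows the smallest and largest nonzero product eigenvalues are $AC$ and $BD$, i.e. the frame bounds.

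The only point worth stating carefully is the restriction to nonzero eigenvalues in the frame case: one must rule out a ``mixed'' product, a small nonzero eigenvalue of one factor times a large eigenvalue of the other, falling below $AC$ or above $BD$. This is immediate from positivity, since for positive reals $x\in[A,B]$ and $y\in[C,D]$ one always has $AC \le xy \le BD$, so no separate estimate is needed. I do not expect any genuine obstacle here; the entire content of the theorem is packaged into the Gram-matrix lemma and the Kronecker eigenvalue theorem, and this proof is essentially their composition.
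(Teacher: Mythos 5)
Your proof is correct and follows essentially the same route as the paper's: both reduce the statement to the lemma identifying the cross-product Gram matrix as $G_\phi\otimes G_\psi^T$ and then read the bounds off the Kronecker eigenvalue theorem. Your explicit monotonicity check that no ``mixed'' product $\lambda_i\nu_j$ escapes $[AC,BD]$ is a small point the paper leaves implicit, but it is the same argument.
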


\begin{proof}
        Let $G_\phi$ and $G_\psi$ be the Gram matrices of $\{\phi_i\}_{i=1}^M$ and $\{\psi_j\}_{j=1}^L$ respectively. Further suppose that the $G_\phi$ has eigenvalues $\{\lambda_i\}_{i=1}^M$ and $G_\psi$ has eigenvalues $\{\nu_i\}_{i=1}^L$. Assume that $\lambda_1\geq \lambda_2\geq \cdots \geq \lambda_M$ and $\nu_1\geq \nu_2\geq \cdots \geq \nu_L$. 
        
        If $\{\phi_i\}_{i=1}^M$ is a frame with frame bounds $A$ and $B$ then $A=\lambda_N$ and $B=\lambda_1$. Likewise, if $\{\psi_j\}_{j=1}^L$ is a frame with frame bounds $C$ and $D$ then $C=\nu_N$ and $D=\nu_1$. Then $G_\phi\otimes G_\psi^T$ has $N^2$ strictly positive eigenvalues and so $\{\phi_i\psi_j^*\}_{i=1,j=1}^{M\ , \ L}$ is a frame for $\HH^{N\times N}$. The frame bounds are the largest and smallest non-zero eigenvalues of $G_\phi\otimes G_\psi^T$ which are $BD$ and $AC$ respectively.
        
        If, on the other hand, $\{\phi_i\}_{i=1}^M$ and $\{\psi_j\}_{j=1}^L$ are Riesz sequences, then $\lambda_M>0$ and $\nu_L>0$ and so $\lambda_i\nu_j>0$ for all $i,j$. Hence $\{\phi_i\psi_j\}_{i=1,j=1}^{M\ , \ L}$ is Riesz with Riesz bounds $BD$ and $AC$.
\end{proof}

For the case of symmetric matrices
(see Proposition \ref{pc3}), to find the dual
functionals of a Riesz sequence of outer products, we had
to project the desired functionals onto the span of the
outer products.  Now we show that this assumption is not
necessary in the general case of {\it outer cross-products}.

\begin{theorem}
If $\{\phi_i\}_{i=1}^N$ and $\{\psi_i\}_{i=1}^N$ are Riesz bases in $\HH^N$ with 
dual Riesz bases $\{\tilde{\phi}_i\}_{i=1}^N$ and
$\{\tilde{\psi}_i\}_{i=1}^N$ respectively, then $\{\phi_i\psi^*_j\}_{i,j=1}^N$ is a Riesz bases for $\HH^{N\times N}$ with dual
basis $\{\tilde{\phi}_i\tilde{\psi}^*_j\}_{i,j=1}^N$.
\end{theorem}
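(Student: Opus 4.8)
The plan is to reduce the statement to a single biorthogonality check, exactly as in the proof of Proposition~\ref{pc3}, but to observe that in the present (non-symmetric) setting no projection is required. First I would record that $\{\phi_i\psi_j^*\}_{i,j=1}^N$ is not merely a Riesz sequence but a genuine Riesz basis of $\HH^{N\times N}$: by the fundamental theorem of outer cross-products it is a Riesz sequence, and it consists of $N^2$ vectors living in the $N^2$-dimensional space $\HH^{N\times N}$, so a linearly independent family of that cardinality must span. The same reasoning applies to $\{\tilde\phi_i\tilde\psi_j^*\}_{i,j=1}^N$, since the dual Riesz bases $\{\tilde\phi_i\}$ and $\{\tilde\psi_j\}$ are themselves Riesz bases of $\HH^N$. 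Because the dual of a Riesz basis is the unique sequence biorthogonal to it, it then suffices to verify that $\{\tilde\phi_i\tilde\psi_j^*\}$ is biorthogonal to $\{\phi_i\psi_j^*\}$ in the Frobenius inner product.

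The computation itself is short. Using $\ip{S}{T}_F=\tr(S^*T)$ and $(\phi_i\psi_j^*)^*=\psi_j\phi_i^*$, I would write
\begin{align*}
\ip{\phi_i\psi_j^*}{\tilde\phi_k\tilde\psi_l^*}_F
&= \tr\!\left(\psi_j\phi_i^*\tilde\phi_k\tilde\psi_l^*\right)\\
&= \left(\phi_i^*\tilde\phi_k\right)\tr\!\left(\psi_j\tilde\psi_l^*\right)\\
&= \left(\phi_i^*\tilde\phi_k\right)\left(\tilde\psi_l^*\psi_j\right),
\end{align*}
where the scalar $\phi_i^*\tilde\phi_k$ has been pulled out of the trace and the trace of the rank-one matrix $\psi_j\tilde\psi_l^*$ collapsed to $\tilde\psi_l^*\psi_j$ by cyclicity. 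It then remains to identify $\phi_i^*\tilde\phi_k=\overline{\ip{\phi_i}{\tilde\phi_k}}=\delta_{ik}$ and $\tilde\psi_l^*\psi_j=\ip{\psi_j}{\tilde\psi_l}=\delta_{jl}$ from the biorthogonality of the original dual pairs, giving $\ip{\phi_i\psi_j^*}{\tilde\phi_k\tilde\psi_l^*}_F=\delta_{ik}\delta_{jl}$ as desired.

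There is no serious obstacle here; the only points needing care are bookkeeping ones. I must keep the conjugate-linearity conventions straight so that the two scalar factors are recognized as the correct Kronecker deltas rather than their conjugates, and I must justify the spanning step so that ``biorthogonal sequence'' really means ``dual basis.'' The genuinely informative point --- and the reason this theorem is stated separately from Proposition~\ref{pc3} --- is the contrast in the first paragraph: in the symmetric case the $M$ outer products $\{\phi_i\phi_i^*\}$ sit inside the larger space $\mathrm{sym}(\HH^{N\times N})$ and generally fail to span it, forcing the projection $P$ in Proposition~\ref{pc3}; here the full family $\{\phi_i\psi_j^*\}$ of cross-products already exhausts all $N^2$ dimensions of $\HH^{N\times N}$, so the naive dual $\{\tilde\phi_i\tilde\psi_j^*\}$ lands in the span automatically and no projection is needed.
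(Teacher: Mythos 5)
Your proposal is correct and follows essentially the same route as the paper: the paper's proof is exactly the one-line biorthogonality computation $\ip{\tilde\phi_i\tilde\psi_j^*}{\phi_l\psi_k^*}_F=\ip{\phi_l}{\phi_i}\ip{\psi_k}{\psi_j}=\delta_{il}\delta_{jk}$ in the Frobenius inner product. Your additional spanning step (the $N^2$-vector dimension count via the fundamental theorem of outer cross-products) merely makes explicit what the paper leaves implicit, and your handling of the conjugations is sound since the biorthogonality deltas are real.
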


\begin{proof}
We compute:
\[ \langle \tilde{\phi}_i\tilde{\psi}^*_j,\phi_l\psi_k^* \rangle_F
= \langle \phi_l,\phi_i\rangle \langle \psi_k,\psi_j\rangle
= \begin{cases} 1 & \text{if } l=i \text{ and } j=k\\
0 & \text{otherwise}
\end{cases}.\]
\end{proof} 

\section{Topological Properties of Independent Outer Product Sequences}
In the abstract, we make the claim that ``almost every" unit norm frame with a cardinality within a particular bound induces a set of independent outer products.

        In this section, we will consider the family of unit norm frames with cardinality $M\leq\dim\mathrm{sym}(\HH^{N\times N})$.
We see that we can identify this family with the topological space $ \bigotimes_{i=1}^M (S_{N-1})$.  We will use the standard metric for frames, $d(\Phi, \Psi) = \sqrt{ \sum_{i=1}^M \| \phi_i - \psi_i\|^2 }$, which is compatible with the subspace topology of the Euclidean topology with regards to $\bigotimes_{i=1}^M (S_{N-1})$.
Results of this kind are often done in frame theory using
algebraic geometry which might give a slightly stronger result
that the unit norm $M$-element frames which produce independent outer products
form an open dense set in the Zariski topology in the
family of all unit norm $M$-element frames.  We have
chosen not
to do this because only a fraction of the field knows
enough algebraic geometry to appreciate such results. 
Instead, we will give a direct, analytic construction for
the density of of the frames giving independent outer products.

\begin{lemma}\label{lem1}
If $\{\phi_i\}_{i=1}^N$ is a Riesz sequence in ${\mathbb H}^N$ with Riesz bounds
$A,B$ and 
\[ \sum_{i=1}^N \|\phi_i-\psi_i\|^2 < \varepsilon^2<A,\]
then $\{\psi_i\}_{i=1}^N$ is Riesz with Riesz bounds $(\sqrt{A}-\varepsilon)^2,
(\sqrt{B}+\varepsilon)^2$.
\end{lemma}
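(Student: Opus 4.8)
The plan is to run a standard triangle-inequality perturbation argument, estimating $\|\sum_i a_i \psi_i\|$ by comparing it to $\|\sum_i a_i \phi_i\|$ and controlling the difference via Cauchy--Schwarz. Fix arbitrary scalars $(a_i)_{i=1}^N$ and set $t = \left(\sum_{i=1}^N |a_i|^2\right)^{1/2}$. The whole proof reduces to bounding the ``error'' vector $\sum_i a_i(\psi_i - \phi_i)$.

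First I would handle that error term. By the triangle inequality and then Cauchy--Schwarz,
\[
\left\| \sum_{i=1}^N a_i(\psi_i - \phi_i) \right\| \le \sum_{i=1}^N |a_i|\,\|\psi_i - \phi_i\|
\le \left(\sum_{i=1}^N |a_i|^2\right)^{1/2}\left(\sum_{i=1}^N \|\psi_i - \phi_i\|^2\right)^{1/2} < \varepsilon\, t,
\]
where the last inequality uses the hypothesis $\sum_i \|\phi_i - \psi_i\|^2 < \varepsilon^2$. This is the single estimate that drives everything.

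Next I would derive both Riesz bounds for $\{\psi_i\}$. For the upper bound, write $\sum_i a_i \psi_i = \sum_i a_i \phi_i + \sum_i a_i(\psi_i-\phi_i)$ and apply the triangle inequality together with the upper Riesz bound $\|\sum_i a_i\phi_i\| \le \sqrt{B}\, t$, yielding $\|\sum_i a_i\psi_i\| \le (\sqrt{B}+\varepsilon)t$, hence upper bound $(\sqrt{B}+\varepsilon)^2$. For the lower bound, use the reverse triangle inequality with the lower Riesz bound $\|\sum_i a_i\phi_i\| \ge \sqrt{A}\, t$ to get $\|\sum_i a_i\psi_i\| \ge (\sqrt{A}-\varepsilon)t$. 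Squaring gives the two stated bounds.

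The only point requiring care, rather than a genuine obstacle, is verifying that the lower bound is strictly positive so that $\{\psi_i\}$ is actually a Riesz sequence: here the hypothesis $\varepsilon^2 < A$ is exactly what forces $\sqrt{A} - \varepsilon > 0$, so that $(\sqrt{A}-\varepsilon)^2 > 0$ is a legitimate lower Riesz bound. Everything else is routine, and no step should present real difficulty.
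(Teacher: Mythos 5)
Your proposal is correct and follows essentially the same argument as the paper: the triangle inequality combined with Cauchy--Schwarz to bound the error term $\left\|\sum_i a_i(\psi_i-\phi_i)\right\|$ by $\varepsilon\left(\sum_i|a_i|^2\right)^{1/2}$, with the reverse triangle inequality giving the lower bound (which the paper leaves as ``follows similarly''). Your explicit remark that $\varepsilon^2<A$ guarantees $\sqrt{A}-\varepsilon>0$ is a welcome detail the paper omits.
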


\begin{proof}
 For any $\{a_i\}_{i=1}^N$ we compute:
\begin{eqnarray*}
\|\sum_{i=1}^N a_i\psi_i\|&\le& \|\sum_{i=1}^Na_i\phi_i\|+\|\sum_{i=1}^Na_i(
\psi_i-\phi_i)\|\\
&\le& B^{1/2}\left ( \sum_{i=1}^N |a_i|^2\right )^{1/2}+\sum_{i=1}^N|a_i|\|\psi_i
- \phi_i\|\\
&\le& B^{1/2}\left ( \sum_{i=1}^N|a_i|^2 \right )^{1/2} + 
\left ( \sum_{i=1}^N|a_i|^2 \right )^{1/2}\left ( \sum_{i=1}^N\|\psi_i-\phi_i\|^2
\right )^{1/2}\\
&\le& (B^{1/2}+\varepsilon)\left ( \sum_{i=1}^N|a_i|^2 \right )^{1/2}.
\end{eqnarray*}
The stated upper Riesz bound is immediate from here.
The lower Riesz bound follows similarly.

\end{proof}

\begin{lemma}\label{lem2}
If $\|\phi\|=\|\psi\|=1$, then

\[ \|\phi\phi^*-\psi\psi^*\|_F^2 \leq 2\|\phi-\psi\|^2
.\]
\end{lemma}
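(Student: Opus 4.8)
The plan is to compute both sides explicitly and then reduce the claimed inequality to a manifestly nonnegative quantity. First I would expand the left-hand side by bilinearity of the Frobenius inner product,
\[
\|\phi\phi^*-\psi\psi^*\|_F^2 = \langle\phi\phi^*,\phi\phi^*\rangle_F - 2\langle\phi\phi^*,\psi\psi^*\rangle_F + \langle\psi\psi^*,\psi\psi^*\rangle_F,
\]
where the two cross terms may be combined because, by Lemma~\ref{pc2}, $\langle\phi\phi^*,\psi\psi^*\rangle_F = |\langle\phi,\psi\rangle|^2$ is real and symmetric in $\phi,\psi$. Applying Lemma~\ref{pc2} to each of the three terms and invoking $\|\phi\|=\|\psi\|=1$ gives $\langle\phi\phi^*,\phi\phi^*\rangle_F = \|\phi\|^4 = 1$ and likewise for $\psi$, so the left-hand side collapses to $2 - 2|\langle\phi,\psi\rangle|^2$.

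Next I would expand the right-hand side in the underlying Hilbert space:
\[
2\|\phi-\psi\|^2 = 2\left(\|\phi\|^2 + \|\psi\|^2 - 2\,\mathrm{Re}\langle\phi,\psi\rangle\right) = 4 - 4\,\mathrm{Re}\langle\phi,\psi\rangle,
\]
again using the unit-norm hypothesis. With both sides in hand, the asserted bound $2 - 2|\langle\phi,\psi\rangle|^2 \le 4 - 4\,\mathrm{Re}\langle\phi,\psi\rangle$ is equivalent, after dividing by $2$ and rearranging, to $2\,\mathrm{Re}\langle\phi,\psi\rangle - |\langle\phi,\psi\rangle|^2 \le 1$. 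Writing $z = \langle\phi,\psi\rangle$, this is exactly $|z-1|^2 \ge 0$, since $|z-1|^2 = |z|^2 - 2\,\mathrm{Re}(z) + 1$.

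There is no genuine obstacle here; the estimate is an equality computation followed by a one-line elementary inequality, and it is in fact sharp, with equality precisely when $z = 1$, i.e. $\phi = \psi$. The only point requiring a little care is the complex setting: one must keep $\mathrm{Re}\langle\phi,\psi\rangle$ (which governs the right-hand side) distinct from $|\langle\phi,\psi\rangle|^2$ (which governs the left-hand side), and resist the temptation to collapse them via $\mathrm{Re}(z)\le |z|$ too early, as that would lose the exact cancellation that makes the final step reduce cleanly to $|z-1|^2\ge 0$.
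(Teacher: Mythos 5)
Your proof is correct, and while it opens exactly as the paper's does, it finishes by a genuinely different route. Both arguments first use bilinearity of the Frobenius inner product together with Lemma~\ref{pc2} and the unit-norm hypothesis to collapse the left-hand side to $2-2|z|^2$ with $z=\langle\phi,\psi\rangle$. From there the paper proceeds multiplicatively: it factors $2(1-|z|)(1+|z|)$, replaces $2-2|z|$ by $2-2\,\mathrm{Re}\,z=\|\phi-\psi\|^2$ via $\mathrm{Re}\,z\le|z|$, and then bounds the remaining factor $1+|z|$ by $2$ using $|z|\le 1$ (Cauchy--Schwarz). You instead expand the right-hand side to $4-4\,\mathrm{Re}\,z$ and reduce the whole claim to $2\,\mathrm{Re}\,z-|z|^2\le 1$, i.e.\ $|z-1|^2\ge 0$. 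Your reduction is more elementary --- it uses no Cauchy--Schwarz and in fact proves the scalar inequality for \emph{all} complex $z$, not just those with $|z|\le 1$ --- and it cleanly exhibits the equality case $z=1$, i.e.\ $\phi=\psi$ (where both sides vanish). What the paper's factorization buys in exchange is the refined intermediate estimate $\|\phi\phi^*-\psi\psi^*\|_F^2\le\|\phi-\psi\|^2\bigl(1+|\langle\phi,\psi\rangle|\bigr)$, which is strictly stronger than the stated bound whenever $|\langle\phi,\psi\rangle|<1$ and makes transparent why the constant $2$ is only attained in the limit $\phi\to\psi$. One small caveat on your closing remark: ``sharp with equality at $\phi=\psi$'' is literally true but degenerate, since both sides are then $0$; the substantive sharpness statement is that the constant $2$ cannot be improved because the ratio of the two sides tends to $1$ as $\psi\to\phi$, which is immediate from the paper's refined bound.
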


\begin{proof}
We compute
\begin{align*}
\|\phi\phi^*-\psi\psi^*\|_F^2 &= \|\phi\phi^*\|_F^2 +\|\psi\psi^*\|_F^2 - 2 \langle \phi\phi^*,\psi\psi^*\rangle_F\\
&= 1+1-2|\langle \phi,\psi\rangle|^2\\
&= 2(1-|\langle \phi,\psi\rangle|^2)\\
&= 2(1-|\langle \phi,\psi \rangle|)(1+|\langle \phi,\psi\rangle|)\\
&= (2-2|\langle \phi,\psi \rangle|)(1+|\langle \phi,\psi\rangle|)\\
&\leq (2-2\mathrm{Re}\langle \phi,\psi \rangle)(1+|\langle \phi,\psi\rangle|)\\
&= (\|\phi\|^2 + \|\psi\|^2 -2\mathrm{Re}\langle \phi,\psi \rangle)(1+|\langle \phi,\psi \rangle|)\\
&= \|\phi-\psi\|^2(1+|\langle \phi,\psi \rangle|) \\
&\leq 2\|\phi-\psi\|^2.
\end{align*}
\end{proof}

\begin{proposition}\label{open}
Let $\{\phi_i\}_{i=1}^M$ are unit norm vectors in ${\mathbb H}^N$ with $\{\phi_i\phi_i^*\}_{i=1}^M$ a Riesz sequence 
having Riesz bounds $A,B$.  Given $0<\varepsilon<A/2$, choose
a unit norm set of vectors $\{\psi_i\}_{i=1}^M$ so that 
\[ \sum_{i=1}^M \|\phi_i-\psi_i\|^2 < \varepsilon<\frac{A}{2}.\]
Then $\{\psi_i\psi_i^*\}_{i=1}^M$ is
Riesz with Riesz bounds \[\left (\sqrt{A}-\sqrt{2\varepsilon}\right )^2\mbox{ and } \left (\sqrt{B}+\sqrt{2\varepsilon}\right )^2.\]
\end{proposition}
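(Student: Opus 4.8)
The plan is to reduce the statement to the two preceding lemmas by regarding the outer products as vectors in the Hilbert space of matrices equipped with the Frobenius inner product. First I would observe that although Lemma~\ref{lem1} is phrased for a Riesz sequence of $N$ vectors in $\HH^N$, its proof uses nothing about the ambient dimension; the triangle-inequality plus Cauchy--Schwarz estimate carried out there holds verbatim for any finite Riesz sequence in any finite-dimensional Hilbert space. In particular it applies to the sequence $\{\phi_i\phi_i^*\}_{i=1}^M$ inside $\left(\mathrm{sym}(\HH^{N\times N}),\ip{\cdot}{\cdot}_F\right)$, which by hypothesis is Riesz with bounds $A,B$.

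Next I would control the total Frobenius perturbation of the outer products in terms of the perturbation of the underlying vectors. Applying Lemma~\ref{lem2} to each index $i$ gives $\|\phi_i\phi_i^*-\psi_i\psi_i^*\|_F^2\le 2\|\phi_i-\psi_i\|^2$, and summing over $i$ yields
\[
\sum_{i=1}^M \|\phi_i\phi_i^*-\psi_i\psi_i^*\|_F^2
\;\le\; 2\sum_{i=1}^M \|\phi_i-\psi_i\|^2
\;<\; 2\varepsilon .
\]
Finally I would invoke Lemma~\ref{lem1} in the Frobenius Hilbert space with perturbation parameter $\varepsilon':=\sqrt{2\varepsilon}$. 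The standing assumption $\varepsilon<A/2$ is exactly what guarantees $(\varepsilon')^2=2\varepsilon<A$, so the hypothesis of Lemma~\ref{lem1} is met, and it shows that $\{\psi_i\psi_i^*\}_{i=1}^M$ is Riesz with bounds $(\sqrt{A}-\varepsilon')^2=(\sqrt{A}-\sqrt{2\varepsilon})^2$ and $(\sqrt{B}+\varepsilon')^2=(\sqrt{B}+\sqrt{2\varepsilon})^2$, as claimed.

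There is essentially no serious obstacle here; the argument is a clean composition of Lemma~\ref{lem2} (to pass to the outer-product space) with Lemma~\ref{lem1} (perturbation of Riesz bounds). The only point requiring a little care is the bookkeeping of the square conventions: Lemma~\ref{lem1} takes a bound of the form $\varepsilon^2$ on the sum of squared norms, whereas after applying Lemma~\ref{lem2} the present hypothesis only bounds that sum by $2\varepsilon$, which is not a perfect square. Matching the two forces the substitution $\varepsilon'=\sqrt{2\varepsilon}$, and it is precisely the factor of $2$ produced by Lemma~\ref{lem2} that explains why the proposition is stated with the threshold $\varepsilon<A/2$ rather than $\varepsilon<A$. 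I would simply verify that the strict inequality $2\varepsilon<A$ is what licenses the application of Lemma~\ref{lem1}, which it is.
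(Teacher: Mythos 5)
Your proposal is correct and follows essentially the same route as the paper: apply Lemma~\ref{lem2} termwise and sum to get $\sum_{i=1}^M\|\phi_i\phi_i^*-\psi_i\psi_i^*\|_F^2<2\varepsilon$, then invoke Lemma~\ref{lem1} in the Frobenius space with $\varepsilon'=\sqrt{2\varepsilon}$. Your explicit remarks that Lemma~\ref{lem1} is dimension-agnostic and that $\varepsilon<A/2$ is exactly what licenses $(\varepsilon')^2<A$ are points the paper leaves implicit, and they are correct.
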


\begin{proof}
Assume the hypotheses.
It follows from our Lemma~\ref{lem2} that
\[ \sum_{i=1}^M \|\phi_i\phi_i^*-\psi_i\psi_i^*\|_F^2 \le 2\sum_{i=1}^M\|\phi_i-\psi_i\|^2<2\varepsilon\]
Now by Lemma~\ref{lem1} we have that $\{\psi_i\psi_i^*\}_{i=1}^M$ is Riesz with Riesz bounds
\[ \left (\sqrt{A}-\sqrt{2\varepsilon}\right )^2,\ \left (\sqrt{B}+\sqrt{2\varepsilon}\right )^2.\]

\end{proof}
The above proposition says that the set of frames with cardinality $M\leq \dim \mathrm{sym}(\HH^{N\times N})$ is open in $ \bigotimes_{i=1}^M (S_{N-1})$. In the remainder of this section we will show that this set is also dense. While other authors have studied the density of outer products in terms of commutative algebra \cite{Jameson_thesis}, here we show this fact constructively and quantitatively using only standard analytic and Euclidean topological notions.

\begin{lemma}\label{invertible_riesz}
                Let $S$ be an invertible operator and suppose $\{\phi_i\}_{i=1}^M$ are vectors in $\HH^N$. Then $\{\phi_i\phi_i^*\}_{i=1}^M$ is independent if and only if $\{S\phi_i(S\phi_i)^*\}_{i=1}^M$ is independent. 
        \end{lemma}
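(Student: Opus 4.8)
The plan is to reduce the independence of the family $\{S\phi_i(S\phi_i)^*\}_{i=1}^M$ to the independence of $\{\phi_i\phi_i^*\}_{i=1}^M$ by exploiting the fact that a linear dependence is preserved under a suitable invertible transformation on the matrix space. First I would observe that $S\phi_i(S\phi_i)^* = S(\phi_i\phi_i^*)S^*$; this is the key algebraic identity, and it follows immediately from $(S\phi_i)^* = \phi_i^*S^*$. Thus the map $X \mapsto SXS^*$ on $\HH^{N\times N}$ sends each $\phi_i\phi_i^*$ to the corresponding $S\phi_i(S\phi_i)^*$.

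The crux is then to argue that the linear map $\Lambda : X \mapsto SXS^*$ on $\HH^{N\times N}$ is itself invertible when $S$ is invertible, since an invertible linear map preserves both linear independence and linear dependence. To see this, I would exhibit the inverse explicitly: $\Lambda^{-1}(Y) = S^{-1}Y(S^{-1})^* = S^{-1}Y(S^*)^{-1}$, using that $S$ invertible implies $S^*$ invertible with $(S^*)^{-1} = (S^{-1})^*$. A direct check shows $\Lambda^{-1}(\Lambda(X)) = S^{-1}SXS^*(S^*)^{-1} = X$, so $\Lambda$ is a bijective linear map on the finite-dimensional space $\HH^{N\times N}$.

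Given that $\Lambda$ is an invertible linear operator, the conclusion is a standard fact: for any invertible linear map, $\{\Lambda(X_i)\}$ is independent if and only if $\{X_i\}$ is independent. Concretely, suppose $\sum_{i=1}^M a_i \, S\phi_i(S\phi_i)^* = 0$. Applying $\Lambda^{-1}$ and using linearity gives $\sum_{i=1}^M a_i \, \phi_i\phi_i^* = 0$, which forces all $a_i=0$ precisely when $\{\phi_i\phi_i^*\}_{i=1}^M$ is independent. The reverse implication follows symmetrically by applying $\Lambda$ to a dependence among the $\phi_i\phi_i^*$.

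I do not anticipate a genuine obstacle here; the only point requiring mild care is verifying the identity $(S\phi_i)^* = \phi_i^* S^*$ and the invertibility of the conjugation map $\Lambda$, both of which are routine. The single idea that makes the lemma transparent is recognizing $S\phi_i(S\phi_i)^* = S(\phi_i\phi_i^*)S^*$ as the image under a fixed invertible linear transformation of the ambient matrix space, after which independence is manifestly preserved.
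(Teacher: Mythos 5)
Your proposal is correct and follows essentially the same route as the paper: the paper's proof also rests on the identity $\sum_i a_i\phi_i\phi_i^* = 0$ if and only if $S\left(\sum_i a_i\phi_i\phi_i^*\right)S^* = \sum_i a_i(S\phi_i)(S\phi_i)^* = 0$, which is exactly your conjugation map $X \mapsto SXS^*$ with invertibility of $S$ doing the work. You merely spell out the invertibility of the conjugation operator more explicitly than the paper does.
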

        \begin{proof}
                Let $\{a_i\}_{i=1}^M$ be scalars, not all zero. We have
                \[0=\sum_{i=1}^Ma_i\phi_i\phi_i^*\]
                if and only if 
                \[0=S\left(\sum_{i=1}^Ma_i\phi_i\phi_i^*\right)S^*=\sum_{i=1}^Ma_i(S\phi_i)(S\phi_i)^*.\]
        \end{proof}
        
        Now we construct a large family of bases of outer products. 
        
        \begin{lemma}\label{small_basis_construction}
                Given a unit norm vector $\psi\in \HH^N$,
               $\varepsilon >0$, there is a
                unit norm basis for $\mathrm{sym}(\HH^{N\times N})$ consisting of outer products $\{\phi_i\phi_i^*\}_{i=1}^d$ with $d={\dim\mathrm{sym}(\HH^{N\times N})}$,  such that $\|\phi_i-\psi\|^2<\varepsilon$ for all $i=1,\dots , d$. 
        \end{lemma}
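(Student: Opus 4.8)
\textbf{The plan is} to realize every $\phi_i$ as a small perturbation of $\psi$ along a fixed direction, and to certify independence of the resulting outer products by a polynomial non-vanishing argument rather than by controlling the span to first order. Indeed, the naive first-order approach fails: the first-order variation of $\phi\phi^*$ at $\psi$ in a tangent direction $u\perp\psi$ is $\psi u^*+u\psi^*$, so infinitesimal motions of a single vector only reach the $(N-1)$-dimensional tangent space of the projection manifold, which is a proper subspace of $\mathrm{sym}(\HH^{N\times N})$. The resolution is to let the second-order term contribute and to borrow independence from a far-away configuration.

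First I would invoke Example~\ref{example_eiej} (real case) or Example~\ref{complex_eiej} (complex case) to fix a basis $\{\eta_i\eta_i^*\}_{i=1}^d$ of $\mathrm{sym}(\HH^{N\times N})$, where $d=\dim\mathrm{sym}(\HH^{N\times N})$; in particular each $\eta_i\neq 0$. Set $w_i=\eta_i-\psi$ and, for real $t$ near $0$, define the unit vectors $\phi_i(t)=(\psi+tw_i)/\norm{\psi+tw_i}$. These are well defined for small $t$ since $\norm{\psi+tw_i}\ge 1-t\norm{w_i}>0$, and also at $t=1$ since $\psi+w_i=\eta_i\neq 0$. By continuity $\phi_i(0)=\psi$, so $\norm{\phi_i(t)-\psi}^2\to 0$ as $t\to 0$.

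The core step is to consider the \emph{unnormalized} matrices $P_i(t)=(\psi+tw_i)(\psi+tw_i)^*$, whose coordinates, under any fixed real-linear identification of the Hermitian matrices with $\RR^d$, are polynomials in the real variable $t$. Writing $\Delta(t)$ for the $d\times d$ determinant of the matrix whose columns are the coordinate vectors $\ve{P_i(t)}$, we get that $\Delta$ is a real polynomial in $t$. At $t=1$ we have $P_i(1)=\eta_i\eta_i^*$, which are linearly independent, so $\Delta(1)\neq 0$; hence $\Delta\not\equiv 0$ and $\Delta$ has only finitely many real roots. Finally I would pick $t>0$ small enough that $\norm{\phi_i(t)-\psi}^2<\varepsilon$ for every $i$ and with $t$ not a root of $\Delta$. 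For such $t$ the $P_i(t)$ are independent, and since $\phi_i(t)\phi_i(t)^*=\norm{\psi+tw_i}^{-2}P_i(t)$ differs from $P_i(t)$ only by a positive scalar, the family $\{\phi_i(t)\phi_i(t)^*\}_{i=1}^d$ is independent as well; being $d$ independent elements of a $d$-dimensional space, it is a basis of unit-norm outer products within $\varepsilon$ of $\psi$, as required.

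\textbf{The main obstacle} is conceptual rather than computational. One must see past the (correct) observation that infinitesimal perturbations of a single vector only span a low-dimensional subspace; the device that makes the proof work is that the quadratic term $t^2w_iw_i^*$ in $P_i(t)$ participates, so that the determinant $\Delta(t)$ is a nonzero polynomial and its non-vanishing can be transported from the well-understood configuration at $t=1$ to arbitrarily small $t$. The only point requiring care is that in the complex case $\Delta$ must be formed with respect to a genuine real-coordinate identification of the self-adjoint matrices, so that it is an honest real polynomial whose zero set is finite. Note that Lemma~\ref{invertible_riesz} is not needed for this streamlined route.
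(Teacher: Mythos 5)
Your proof is correct, but it takes a genuinely different route from the paper. The paper also starts from the basis of Example~\ref{example_eiej} (or \ref{complex_eiej}), but after reducing to the case $\psi=e_1$ (arranging $\langle\psi,\psi_i\rangle>0$ by unimodular rescaling, and undoing the reduction at the end with a unitary $U$ taking $\phi$ to $\psi$), it pulls the whole basis toward $\psi$ by conjugating with the explicit invertible compression $S=\diag(1,\delta,\delta,\ldots,\delta)$ and normalizing: independence of $\{\phi_i\phi_i^*\}$ is then automatic from Lemma~\ref{invertible_riesz}, and closeness $\|\phi_i-\psi\|^2\le\varepsilon$ is verified by a direct coordinate estimate, yielding a quantitative choice of $\delta$ in terms of $\varepsilon$. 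You instead interpolate linearly between $\psi$ and each basis vector $\eta_i$ and transport independence from $t=1$ down to small $t>0$ via non-vanishing of the real polynomial $\Delta(t)=\det[\ve{P_1(t)}\,\cdots\,\ve{P_d(t)}]$, discarding the finitely many roots; the positive normalizing scalars are correctly observed to be harmless, and your caveat about using a real-coordinate identification in the complex case (so that $\Delta$ is an honest real polynomial and real-linear independence in $\mathrm{sym}(\HH^{N\times N})$ is what the determinant detects) is exactly the right point of care. What each approach buys: the paper's argument is fully explicit and quantitative (an admissible $\delta$ can be written down), needs no genericity, and reuses Lemma~\ref{invertible_riesz}, whereas yours is shorter, skips the reduction to $\psi=e_1$, the positivity bookkeeping, and the final unitary step entirely — though it is mildly ironic, given the paper's stated preference for direct analytic constructions over Zariski-style genericity, that your determinant-polynomial trick is a (perfectly elementary and rigorous) one-variable instance of precisely that genericity idea, and it does not produce an explicit perturbation size.
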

        \begin{proof}
First, we will assume that we have a
unit norm basis $\{\psi_i\psi_i^*\}_{i=1}^d$
of $\mathrm{sym}(\HH^{N\times N})$
with $ \langle \psi,\psi_i\rangle
>0$ for all $i$ and $\psi=e_1$ for an orthonormal basis
$\{e_j\}_{j=1}^N$ of $\HH^N$. We can see that such a basis exist by a unitary transformation of Example~\ref{example_eiej} or Example~\ref{complex_eiej}. Choose $\delta >0$
with the following property:  If 
\[ S=diag(1,\delta,\delta,\ldots,\delta),\]
then for all $i=1,2,\ldots,d$ we have
\begin{eqnarray}\label{Eqn1}
 \sum_{j=2}^N |S\psi_i(j)|^2 = \delta^2 \sum_{j=2}^N|\psi_i(j)|^2 \le \frac{\varepsilon}{2} |\psi_i(1)|^2 \le 
\frac{\varepsilon}{2} \|S\psi_i\|^2.
\end{eqnarray}
Let 
\[ \phi_i = \frac{S\psi_i}{\|S\psi_i\|}\mbox{ for all }
i=1,2,\ldots,d,\] 
and observe that $\|\phi_i\|=1$ and Equation \ref{Eqn1} 
imply
\[ \phi_i(1) \ge 1-\frac{\varepsilon}{2}.\]
Now we compute for all $i=1,2,\ldots,d$
\[ \|\psi-\phi_i\|^2  = |1- \phi_i(1)|^2 + 
\sum_{j=2}^N |\phi_i(j)|^2\le  \epsilon.
\]
Since $\{\psi_i\psi_i^*\}_{i=1}^d$ is linearly independent,
by Lemma \ref{invertible_riesz}, the $\{\phi_i\phi_i^*\}_{i=1}^d$ are also independent.

For the general case, given $\psi$ and $\{\psi_i\}_{i=1}^d$
with independent outer products,
choose a vector $\phi$ so that
$\langle \phi,\psi_i\rangle \not= 0$ for all $i=1,2,\ldots,
d$.  By replacing $\phi$ by $c_i\phi$ with $|c_i|=1$ if necessary, we can assume these
inner products are all strictly positive.  By the above,
we can find $\{\phi_i\}_{i=1}^d$ with their outer products
independent and
\[ \|\phi-\phi_i\|^2<\varepsilon.\]
Choose a unitary operator $U$ so that $U\phi=\psi$ and we
have
\[ \|\psi - U\phi_i\|^2 = \|U\phi -U\phi_i\|^2
= \|\phi-\phi_i\|^2 <
\varepsilon.\]
This completes the proof.  
        \end{proof}

        With the above lemmas we are ready to prove the following.

        \begin{theorem}
                The set of all frames $\{\phi_i\}_{i=1}^M$ with $M\leq \dim\mathrm{sym}(\HH^N)$ which produce independent outer products is open and dense in the family of $M$-element frames. 
        \end{theorem}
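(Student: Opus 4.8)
The plan is to establish openness and density separately, then combine them. Openness is already in hand: Proposition~\ref{open} shows that any unit norm frame whose outer products are Riesz (equivalently, independent, in finite dimensions) has a Euclidean neighborhood, measured by the frame metric $d$, inside which every unit norm frame also produces independent outer products. Since independence of the outer products is precisely the condition defining our set, Proposition~\ref{open} is exactly the statement that the set is open in $\bigotimes_{i=1}^M (S_{N-1})$. So the entire burden of the theorem is density.

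For density, I would argue as follows. Fix an arbitrary unit norm frame $\{\phi_i\}_{i=1}^M$ and an $\varepsilon>0$; I must produce a unit norm frame within distance $\varepsilon$ (in the metric $d$) whose outer products are independent. The key engine is Lemma~\ref{small_basis_construction}, which says that around \emph{any} fixed unit vector $\psi$ and for any tolerance, one can find a full independent outer product \emph{basis} $\{\eta_j\eta_j^*\}_{j=1}^d$ of $\mathrm{sym}(\HH^{N\times N})$ whose generating vectors $\eta_j$ all lie arbitrarily close to $\psi$. The plan is to perturb each $\phi_i$ to a vector $\psi_i$, keeping the perturbation summable to less than $\varepsilon$, so that the resulting collection sits inside such an independent family. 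Concretely, apply Lemma~\ref{small_basis_construction} to build, for each index $i$, a candidate replacement $\psi_i$ close to $\phi_i$, chosen so that the full collection $\{\psi_i\psi_i^*\}_{i=1}^M$ extends to (or is contained in) a linearly independent set; since $M \le d = \dim\mathrm{sym}(\HH^{N\times N})$, there is no dimensional obstruction to the outer products being independent.

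The cleanest route is to reduce to the basis case. I would first handle the situation $M=d$: here Lemma~\ref{small_basis_construction} essentially delivers the result, but the subtlety is that the lemma clusters \emph{all} $\eta_j$ near a \emph{single} vector $\psi$, whereas here I need each replacement to stay near its own $\phi_i$, which may point in genuinely different directions. I expect \textbf{this to be the main obstacle}: one cannot simply invoke the lemma with a single base point. The way around it is to exploit Lemma~\ref{invertible_riesz}, which says independence of outer products is preserved under any invertible $S$ acting as $\phi\mapsto S\phi$. The strategy is to start from a fixed independent outer product family (e.g.\ from Example~\ref{example_eiej} or Example~\ref{complex_eiej}), and then transport it by a well-chosen invertible operator so that its generating vectors land near the prescribed directions $\phi_i/\|\phi_i\|$; after renormalizing to the sphere, independence survives by Lemma~\ref{invertible_riesz}, and a continuity/perturbation estimate (bounding $\|\phi_i - \psi_i\|$ via the operator norm of $S-I$) keeps the total displacement under $\varepsilon$.

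Finally, for $M<d$, I would observe that a sub-collection of an independent family is independent, so it suffices to perturb $\{\phi_i\}_{i=1}^M$ into the first $M$ vectors of such a transported basis; for $M$ only up to $d$ there is no obstruction. Combining the density just established with the openness from Proposition~\ref{open} yields that the set of $M$-element unit norm frames producing independent outer products is both open and dense in $\bigotimes_{i=1}^M(S_{N-1})$, completing the proof. The one point demanding care throughout is the bookkeeping of constants: the perturbations coming from $\delta$ in Lemma~\ref{small_basis_construction} and from the operator transport must be jointly controlled so that $\sum_i \|\phi_i-\psi_i\|^2 < \varepsilon$, but this is a routine tightening of the estimates already appearing in the lemmas.
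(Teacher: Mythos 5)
Your treatment of openness is correct and matches the paper: Proposition~\ref{open} is exactly the statement that the set is open in $\bigotimes_{i=1}^M(S_{N-1})$, and the first paragraph of your density sketch (perturb one vector at a time using Lemma~\ref{small_basis_construction}) is the right idea. But the route you actually commit to for density --- transporting a \emph{single fixed} independent family (say from Example~\ref{example_eiej}) by one invertible operator $S$ so that its renormalized images $S\psi_i/\|S\psi_i\|$ land near \emph{all} of the prescribed directions $\phi_i$ simultaneously --- has a genuine gap: such an $S$ does not exist in general. The normalized images depend on at most $N^2-1$ real parameters (the operator, modulo scale), while an arbitrary target frame ranges over a set of dimension $M(N-1)$, which is strictly larger once $M$ is near $\dim\mathrm{sym}(\HH^{N\times N})$. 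Concretely, in $\RR^3$ with the six vectors of Example~\ref{example_eiej}: forcing $Se_i/\|Se_i\|$ to be close to $e_i$ for $i=1,2,3$ makes $S$ nearly diagonal, $S\approx \diag(a,b,c)$; then requiring the images of $(e_1+e_2)/\sqrt{2}$ and $(e_1+e_3)/\sqrt{2}$ to stay near those same directions forces $a\approx b\approx c$, so the image of $(e_2+e_3)/\sqrt{2}$ is pinned near $(e_2+e_3)/\sqrt{2}$ and can never approximate a target such as $(e_2+2e_3)/\sqrt{5}$. Lemma~\ref{invertible_riesz} preserves independence under such transport, but it cannot manufacture the needed $S$; and your fallback remark that ``$M\le d$ means no dimensional obstruction'' is only a necessary condition, not an argument that independence is achievable by a small perturbation.

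The obstacle you flagged --- that Lemma~\ref{small_basis_construction} clusters all its vectors near a \emph{single} base point --- dissolves if you use the lemma the way the paper does: inductively, one new vector per step, with a fresh base point each time. Set $\phi_1'=\phi_1$, and having chosen $\{\phi_i'\}_{i=1}^{M_0}$ with independent outer products and $\|\phi_i'-\phi_i\|<\varepsilon/M$, apply Lemma~\ref{small_basis_construction} with $\psi=\phi_{M_0+1}$ to get a full unit norm basis $\{\psi_k\psi_k^*\}_{k=1}^{d}$ of $\mathrm{sym}(\HH^{N\times N})$ with every $\psi_k$ within $\varepsilon/M$ of $\phi_{M_0+1}$. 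Since these $d$ outer products span the whole space and $\dim\mathrm{span}\,\{\phi_i'(\phi_i')^*\}_{i=1}^{M_0}=M_0<d$, by pigeonhole some $\psi_k\psi_k^*$ lies outside that span; take $\phi_{M_0+1}'=\psi_k$. This selection step is the missing mechanism in your write-up: it is what guarantees independence is \emph{extended} at each stage, rather than merely unobstructed, and it handles all $M\le d$ at once with the bookkeeping $\sum_{i=1}^M\|\phi_i'-\phi_i\|<\varepsilon$ coming for free.
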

        \begin{proof}
        
        This set was already shown to be open by Proposition~\ref{open}. All that remains to show is that this set is also dense. Let $\phi_1'=\phi_1$ and proceed by induction. Assume that we have a collection of vectors $\{\phi_i'\}_{i=1}^{M_0}$ such that $\|\phi_i'-\phi_i\|<\varepsilon/M$ for all $i=1,\dots,M_0$ and $\{\phi_i'(\phi_i')^*\}_{i=1}^{M_0}$ is independent. Then by Lemma~\ref{small_basis_construction} there exists a unit norm basis $\{\psi_i\}_{i=1}^{\dim\mathrm{sym}(\HH^{N\times N})}$ such that $\|\phi_{M_0+1}-\psi_i\|<\varepsilon/M$ for all $i$.  Since dim span $\{\psi_i \psi_i^*\}_{i=1}^{M_0} = M_0$, we can  
        choose $\phi_{M_0+1}'=\psi_k$ such that $\psi_k\psi_k^*\notin \mathrm{span}(\{\phi_i'(\phi_i')^*\}_{i=1}^{M_0})$. Then the set $\{\phi_i'\}_{i=1}^{M_0+1}$ induces independent outer products with $\|\phi_i'-\phi_i\|<\varepsilon/M$ for all $i$. By induction, we have obtained a set $\{\phi_i'\}_{i=1}^{M}$ such that
        \[\sum_{i=1}^M\|\phi_i'-\phi_i\|<\varepsilon\]
    and which induces independent outer products.       
%

        \end{proof}


\section{A Geometric Classification of All Finite Dependent Outer Product Sequences}

We will now precisely classify all frames that induce dependent outer products in terms of compact manifolds within finite dimensional Hilbert spaces.  This itself is reliant upon some results regarding positive semi-definite matrices, which are given in the final section.\\

It should be added that we are interested in classifying \textit{dependent} sets;  as we have seen in the previous section, these are far less common than independent sets.

\subsection{Some Necessary and Sufficient Conditions}
%
This section heavily relies on the following theorem, which will be proven in Section~\ref{big_ol_proof}.
\begin{theorem}
\label{samerankfamily1}
Let $T$ be a $N \times N$ positive semi-definite matrix.  Let $\{e_i\}_{i=1}^N$ be the eigenvectors of $T$ with the corresponding eigenvalues $\{\lambda_i\}_{i=1}^N$.  Let $I_+ \subset \{1, \ldots, N\}$ be the index for the eigenvectors with positive eigenvalues, i.e., $i \in I_+ \Leftrightarrow \lambda_i > 0$.\\
Let $\{ a_i \}_{i \in I_+}$ be a sequence of scalars such that $\sum_{i \in I_+} |a_i|^2 = 1$.  Then, for the vector $v = \sum_{i \in I_+} a_i \sqrt{\lambda_i} e_i$ , we will have:

$$rank\ \begin{bmatrix} T & v \\ v^* & 1 \end{bmatrix} = rank\ T$$

Likewise, the converse is true:  if we have $rank\ \begin{bmatrix} T & v \\ v^* & 1 \end{bmatrix} = rank\ T$, then  $v = \sum_{i \in I_+} a_i \sqrt{\lambda_i} e_i$ for some collection of scalars indexed by $I_+$, $\{a_i\}_{i \in I_+}$ where $\sum_{i \in I_+} |a_i|^2 = 1$.  
\end{theorem}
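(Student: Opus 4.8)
The plan is to reduce everything to a Schur-complement computation followed by an analysis of a rank-one perturbation of a diagonal matrix. Since the bottom-right block of the $(N+1)\times(N+1)$ matrix is the invertible scalar $1$, the standard block factorization gives
\[ \mathrm{rank}\begin{bmatrix} T & v \\ v^* & 1 \end{bmatrix} = 1 + \mathrm{rank}(T - vv^*). \]
Writing $r := \mathrm{rank}\,T = |I_+|$, the asserted equality of ranks is therefore equivalent to the single statement $\mathrm{rank}(T - vv^*) = r - 1$, and I would prove both directions of the theorem simultaneously by characterizing exactly when this holds.

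First I would pass to the orthonormal eigenbasis $\{e_i\}$, in which $T$ is diagonal with the positive eigenvalues $\{\lambda_i\}_{i\in I_+}$ occupying the first $r$ slots and zeros elsewhere. Splitting a general $v$ as $v = b\oplus d$ along $\HH^N = \mathrm{range}(T)\oplus\ker(T)$, the perturbation takes the block form
\[ T - vv^* = \begin{bmatrix} D - bb^* & -bd^* \\ -db^* & -dd^* \end{bmatrix}, \]
where $D = \mathrm{diag}(\lambda_i)_{i\in I_+}$ is invertible. For the forward direction the hypothesis $v = \sum_{i\in I_+} a_i\sqrt{\lambda_i}e_i$ means precisely that $d = 0$ and $b_i = a_i\sqrt{\lambda_i}$, so $T - vv^*$ is block-diagonal and I only need to understand $D - bb^*$. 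A short computation (or Cauchy--Schwarz with its equality case) shows $D - bb^*\succeq 0$ with kernel exactly the line spanned by $(a_i/\sqrt{\lambda_i})_{i\in I_+}$, the one-dimensionality coming from $b^*D^{-1}b = \sum_{i\in I_+}|a_i|^2 = 1$; hence $\mathrm{rank}(D - bb^*) = r-1$ and the rank equality follows.

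The converse is the substantive part, and the main obstacle is showing that the rank condition forbids any component of $v$ in $\ker(T)$, i.e.\ forces $d = 0$. I would handle this by solving $(T-vv^*)(x\oplus y) = 0$ directly: the bottom block reads $-d(b^*x + d^*y) = 0$, so if $d\neq 0$ then $b^*x + d^*y = 0$, and substituting this into the top block collapses it to $Dx = 0$, whence $x = 0$ and then $d^*y = 0$. The kernel is thus $\{(0,y): d^*y = 0\}$ of dimension $N - r - 1$, giving $\mathrm{rank}(T - vv^*) = r+1 \neq r-1$. Therefore $\mathrm{rank}(T - vv^*) = r-1$ forces $d = 0$, placing $v\in\mathrm{range}(T)$.

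Finally, with $d = 0$ the rank equals $\mathrm{rank}(D - bb^*)$, and the same kernel analysis shows $D - bb^*$ has a nontrivial (necessarily one-dimensional) kernel if and only if $b^*D^{-1}b = 1$: any $x$ with $(D-bb^*)x = 0$ must be a multiple $tD^{-1}b$, and self-consistency of $t = t\,(b^*D^{-1}b)$ forces $b^*D^{-1}b = 1$ unless $t = 0$. Setting $a_i := b_i/\sqrt{\lambda_i}$ then recovers both the claimed form $v = \sum_{i\in I_+}a_i\sqrt{\lambda_i}e_i$ and the normalization $\sum_{i\in I_+}|a_i|^2 = b^*D^{-1}b = 1$, completing the converse.
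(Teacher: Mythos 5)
Your proof is correct, and it takes a genuinely different route from the paper's. The paper never forms the matrix $T - vv^*$: it proves the forward direction by exhibiting the explicit preimage $\sum_{i\in I_+} (a_i/\sqrt{\lambda_i})e_i$, showing first that bordering $T$ by the column $v$ preserves rank (since $v\in \mathrm{Range}\,T$) and then that appending the row $[v^*\ \ 1]$ does too (since $\bigl[\begin{smallmatrix} v \\ 1\end{smallmatrix}\bigr]$ lies in the range of $\bigl[\begin{smallmatrix} T \\ v^*\end{smallmatrix}\bigr]$, the bottom entry being $\sum |a_i|^2 = 1$); the converse is split across separate lemmas showing that a nonzero component of $v$ in $\ker T$ strictly increases the rank, and that for $v$ in the range the consistency condition $\langle v, T^{-1}v\rangle = 1$ forces the normalization. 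Your argument instead makes the one-line reduction $\mathrm{rank}\bigl[\begin{smallmatrix} T & v \\ v^* & 1 \end{smallmatrix}\bigr] = 1 + \mathrm{rank}(T - vv^*)$ via the Schur complement of the invertible scalar corner (Guttman rank additivity), after which both directions fall out of a single kernel computation for the rank-one update $T - vv^*$ in the eigenbasis: the case $d\neq 0$ gives rank exactly $r+1$, and the case $d=0$ gives rank $r-1$ precisely when $b^*D^{-1}b = 1$, which is the normalization $\sum_{i\in I_+}|a_i|^2 = 1$ in disguise (equivalently $v^*T^{\dagger}v = 1$). This buys you a unified and quantitatively sharper statement — you compute the rank in every case, not merely that it increases — and it makes conceptually transparent where the normalization comes from, whereas the paper's approach is more elementary in that it needs no block factorization, only range membership and preimages. (Incidentally, your route also sidesteps the paper's Lemma \ref{negativelemma} about eigenvectors with negative eigenvalues, which is vacuous for a positive semi-definite $T$ and appears to be a vestige of a more general Hermitian version.) All steps check out: the block factorization is valid since the $(2,2)$ block is the invertible scalar $1$; the Cauchy--Schwarz argument correctly identifies $\ker(D - bb^*)$ as the line through $D^{-1}b$ under the normalization; and the bottom-block equation $-d(b^*x + d^*y) = 0$ correctly collapses the top block to $Dx = 0$ when $d \neq 0$, yielding kernel $\{(0,y) : d^*y = 0\}$ of dimension $N - r - 1$.
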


\begin{proposition}
\label{bsimplexprop}
Let $\{\phi_i\}_{i=1}^M\subset \HH^N$ be unit norm, and add an additional unit norm vector $\phi_{M+1}$.  Assume the set of induced outer products is $\{ \phi_i \phi_i^* \}_{i=1}^{M}$ is independent, and that $M + 1 \le \mathrm{dim}\ \mathrm{sym} (\HH^{N \times N})$.\\
Let $G_{op}$ be the Gram matrix of the induced outer products for the original sequence, that is the Gram matrix of $\{\phi_i \phi_i^* \}_{i=1}^M$, and denote the eigenvectors of $G_{op}$ as $\{ e'_i : 1 \le i \le M \}$ and the associated eigenvalues $\{\lambda_i' : 1 \le i \le M \}$.  \\
We consider the analysis operator $T$ for $\{\phi_i\}_{i=1}^M$ acting on $\phi_{M+1}$.  This is  $$T \phi_{M+1} = \begin{bmatrix} \langle \phi_{M+1}, \phi_1 \rangle \\ \langle \phi_{M+1}, \phi_2 \rangle \\ \vdots \\ \langle \phi_{M+1}, \phi_M \rangle \end{bmatrix}$$\\
Consider the following second order elliptic function:\\

\begin{equation}
\label{ellipticeqn1}
f(x_1, x_2, \ldots, x_M) = \sum_{1 \le i \le M} \frac{|x_i|^2}{\lambda_i'}
\end{equation}

Let $y_1 e'_1 + y_2 e'_2 + \cdots + y_M e'_M = T \phi_{M+1} \circ \overline{T \phi_{M+1}}$ be the representation of $T \phi_{M+1} \circ \overline{T \phi_{M+1}}$ within $\{e'_1, \ldots, e'_M  \}$.  Then we will have that $f(y_1, \ldots, y_M) = 1$ if and only if $\{\phi_i \phi_i^* \}_{i=1}^{M+1}$ is a dependent set.  
\end{proposition}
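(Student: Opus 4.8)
The plan is to recognize that the enlarged family $\{\phi_i\phi_i^*\}_{i=1}^{M+1}$ has a Gram matrix in exactly the bordered form to which Theorem~\ref{samerankfamily1} applies, with $T = G_{op}$. First I would record, via Lemma~\ref{pc2}, that the Gram matrix of the enlarged family is
\[ \tilde{G} = \begin{bmatrix} G_{op} & w \\ w^* & 1 \end{bmatrix}, \]
where the border vector is $w = \big(|\ip{\phi_{M+1}}{\phi_i}|^2\big)_{i=1}^M$ and the bottom-right entry is $|\ip{\phi_{M+1}}{\phi_{M+1}}|^2 = \|\phi_{M+1}\|^4 = 1$ since $\phi_{M+1}$ is unit norm. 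The key observation is that this $w$ is precisely the Hadamard product $T\phi_{M+1}\circ\overline{T\phi_{M+1}}$: the $i$-th entry of $T\phi_{M+1}$ is $\ip{\phi_{M+1}}{\phi_i}$, and multiplying entrywise by its conjugate yields $|\ip{\phi_{M+1}}{\phi_i}|^2$. In particular $w$ is real, so $w^* = w^T$.

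Next I would translate the dependency statement into a rank condition. Since $\{\phi_i\phi_i^*\}_{i=1}^M$ is assumed independent, $G_{op}$ is positive definite, so $\mathrm{rank}\,G_{op} = M$ and every eigenvalue $\lambda_i'$ is strictly positive; in the language of Theorem~\ref{samerankfamily1} this means $I_+ = \{1,\ldots,M\}$. Because $G_{op}$ is a principal submatrix of $\tilde{G}$, we have $M = \mathrm{rank}\,G_{op} \le \mathrm{rank}\,\tilde{G} \le M+1$, and $\{\phi_i\phi_i^*\}_{i=1}^{M+1}$ is dependent exactly when $\tilde{G}$ is singular, i.e.\ exactly when $\mathrm{rank}\,\tilde{G} = M = \mathrm{rank}\,G_{op}$. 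Applying Theorem~\ref{samerankfamily1} with $T = G_{op}$ and $v = w$ then gives: the family is dependent if and only if $w = \sum_{i=1}^M a_i\sqrt{\lambda_i'}\,e_i'$ for some scalars $\{a_i\}$ with $\sum_{i=1}^M |a_i|^2 = 1$.

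Finally I would match this normalization with the elliptic equation \eqref{ellipticeqn1}. Writing $w = \sum_{i=1}^M y_i e_i'$ in the eigenbasis, uniqueness of coordinates forces $a_i\sqrt{\lambda_i'} = y_i$, hence $a_i = y_i/\sqrt{\lambda_i'}$ (this is where $\lambda_i' > 0$ is used). The constraint $\sum_i |a_i|^2 = 1$ therefore reads $\sum_{i=1}^M |y_i|^2/\lambda_i' = f(y_1,\ldots,y_M) = 1$, which is the desired conclusion. There is no deep obstacle once Theorem~\ref{samerankfamily1} is in hand; the only points requiring care are the bookkeeping that identifies the border vector with the Hadamard product $T\phi_{M+1}\circ\overline{T\phi_{M+1}}$ and the verification that the bottom-right entry equals $1$, together with confirming that the positive definiteness of $G_{op}$ legitimately places us in the $I_+ = \{1,\dots,M\}$ case of the theorem, so that each division by $\sqrt{\lambda_i'}$ is valid.
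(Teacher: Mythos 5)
Your proposal is correct and follows essentially the same route as the paper: both identify the Gram matrix of $\{\phi_i\phi_i^*\}_{i=1}^{M+1}$ as the bordered matrix with border vector $T\phi_{M+1}\circ\overline{T\phi_{M+1}}$ and bottom-right entry $1$, equate dependence with the rank of this Gram matrix staying at $\mathrm{rank}\,G_{op}=M$, and invoke Theorem~\ref{samerankfamily1} to translate that rank condition into the normalization $\sum_i |a_i|^2=1$, i.e.\ $f(y_1,\ldots,y_M)=1$. Your write-up is in fact more careful than the paper's, which leaves implicit the positive-definiteness of $G_{op}$ (so $I_+=\{1,\ldots,M\}$) and the coordinate matching $a_i = y_i/\sqrt{\lambda_i'}$ that you spell out.
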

\begin{proof}
This follows directly from Theorem~\ref{samerankfamily1} and the identity of $G_{op} = G \circ \overline{G}$.  If we add the additional vector $\phi_{M+1}$ to our basis then the ${(M+1)}^{th}$ column of the Gram matrix for the outer products  $\{\phi_i \phi_i^*\}_{i=1}^{M+1}$ is \[\begin{bmatrix} T \phi_{M+1}\ \circ\ \overline{T \phi_{M+1}} \\ 1 \end{bmatrix},\] while the ${(M+1)}^{th}$ row is $[(T \phi_{M+1} \circ \overline{T \phi_{M+1}})^* \ \ 1]$.  We know that the dimension spanned by a frame is exactly the rank of its Gram matrix;  Theorem~\ref{samerankfamily1} implies that $T \phi_{M+1} \circ \overline{ T \phi_{M+1}\ }$ must precisely meet the criteria of this proposition to have the condition that the rank of the Gram matrix does not increase, and thereby does not increase the dimension spanned by the set $\{ \phi_i \phi_i^* \}_{i=1}^{M+1}$, i.e., this collection of outer products produces a dependent set. 
\end{proof}

\begin{remark}
\label{forthordervarietyrmk}
The previous theorem yields a quartic algebraic variety/manifold that will come in handy.  Let $\{e'_i\}_{i=1}^M$ be as in the theorem.  Consider the quartic equation for $v \in \HH^M$:
\begin{equation}
\label{forthordervariety}
\sum_{i=1}^M \frac{|\langle v \circ \overline{v} , e'_i \rangle |^2}{\lambda_i} = 1
\end{equation}

We use the notation $\mu^4_{\{\phi_i\}_{i=1}^M}$ to signify this quartic manifold embedded in $\HH^M$.  Note that  $v = T \phi_{M+1}$ satisfies this equation if and only if $\phi_{M+1}$ satisfies the criteria for the previous theorem.  Thus, if we are to consider the forth order algebraic variety for all $v \in \HH^M$\ that satisfy this equation, then the collection of all $T \phi_{M+1}$ such that $\phi_{M+1}$ satisfy the criteria for the  previous theorem are contained entirely within this variety.

\end{remark}

\section{Full Geometric Characterization of Dependent Outer Products}
Without loss of generality, we order every frame in this section such that $\{\phi_1, \ldots, \phi_N\}$ is a basis for its Hilbert space $\HH^N$, and $\{\phi_1, \ldots, \phi_{M_0} \}$ with $M_0 \le M$ such that $\{\phi_1 \phi_1^*, \ldots, \phi_{M_0} \phi_{M_0}^* \}$ is an independent sequence within the induced set of outer products $\{\phi_i \phi_i^* \}_{i=1}^M$.  Unless otherwise noted, we assume $M \le \mathrm{dim}\ \mathrm{sym}(\HH^{N \times N})$.  By default, $T$ will be the analysis operator for the frame $\{\phi_i \}_{i=1}^M$, while $S_{N-1}$ is be the unit sphere in $\HH^N$.\\

We start with some necessary lemmas.

\begin{lemma}
\label{elliplem1}
Let $S_{N-1}$ be the unit sphere in $\HH^N$.  $T S_{N-1}$ is an ellipsoid  embedded within $\HH^M$ with a Euclidean surface of dimension $N-1$;  moreover, $T$ is injective from $S_{N-1} \mapsto T S_{N-1}$.  
\end{lemma}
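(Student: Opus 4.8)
The plan is to exploit the polar decomposition of the analysis operator, reducing the statement to the elementary fact that a positive-definite operator carries a sphere to an ellipsoid. First I would record that $T$ is injective on all of $\HH^N$: since $\{\phi_i\}_{i=1}^M$ is a frame it spans $\HH^N$, so $T\psi = (\langle \psi,\phi_i\rangle)_{i=1}^M = 0$ forces $\psi \perp \mathrm{span}\{\phi_i\}_{i=1}^M = \HH^N$, i.e. $\psi = 0$. In particular the restriction $T\colon S_{N-1} \to TS_{N-1}$ is a bijection onto its image, which already settles the injectivity claim; the content remaining is the geometry of $TS_{N-1}$.

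Next, let $S = T^*T$ be the frame operator of $\{\phi_i\}_{i=1}^M$, which is positive, self-adjoint, and invertible by the frame operator theorem, and let $S^{1/2}$ be its positive square root. I would write the polar decomposition $T = U S^{1/2}$, where $U$ is a linear isometry of $\HH^N$ onto the $N$-dimensional subspace $\mathrm{range}(T) \subset \HH^M$; it is an isometry because $\|US^{1/2}\psi\|^2 = \langle S\psi,\psi\rangle = \|S^{1/2}\psi\|^2$ and $S^{1/2}$ is invertible. Consequently $TS_{N-1} = U\bigl(S^{1/2}S_{N-1}\bigr)$, so it suffices to show that $S^{1/2}S_{N-1}$ is an ellipsoid and then transport it rigidly by $U$.

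To verify the ellipsoid claim I would diagonalize $S$ in an orthonormal eigenbasis $\{e_j\}_{j=1}^N$ with eigenvalues $\mu_1 \ge \cdots \ge \mu_N > 0$. For $\psi = \sum_j c_j e_j$ with $\sum_j |c_j|^2 = 1$ the image is $S^{1/2}\psi = \sum_j \sqrt{\mu_j}\,c_j e_j$; setting $w_j = \sqrt{\mu_j}\,c_j$, the unit-norm constraint becomes $\sum_j |w_j|^2/\mu_j = 1$, which is exactly the equation of an ellipsoid with semi-axes $\sqrt{\mu_j}$ along the directions $e_j$. Since every $\mu_j > 0$, this ellipsoid is nondegenerate and of full dimension in $\HH^N$, so its bounding surface has dimension $N-1$. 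Applying the isometry $U$ embeds it rigidly into $\HH^M$ inside the $N$-dimensional subspace $\mathrm{range}(T)$, preserving both the ellipsoidal shape and the surface dimension $N-1$.

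The computations here are routine; the only point demanding genuine care is the dimension bookkeeping, which is the step I would flag as the main (mild) obstacle. The count $N-1$ is literal for the real field $\HH = \RR$, where $S_{N-1}$ is the usual $(N-1)$-sphere. Over $\CC$ one instead treats $S^{1/2}$ as a real-linear map and replaces $N-1$ by the real dimension of the complex sphere, but the structural skeleton — injectivity from spanning, polar decomposition, and positive-definiteness of $S$ ruling out any degeneracy of the image ellipsoid — carries over unchanged.
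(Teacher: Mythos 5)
Your proof is correct, and it arrives at the same geometric picture as the paper, but by a genuinely more self-contained route: where the paper simply cites Lemma 3.24 and Chapter 7 of \cite{frames_for_undergraduates} for the injectivity and ellipsoid facts respectively, you prove both from scratch. Your injectivity argument (kernel of the analysis operator is the orthogonal complement of the span, which is all of $\HH^N$ for a frame) is exactly the content of the cited lemma, and your polar decomposition $T = US^{1/2}$ with $S = T^*T$ the frame operator unpacks the cited ellipsoid fact: diagonalizing $S$ shows the image of the sphere satisfies $\sum_j |w_j|^2/\mu_j = 1$ with semi-axes $\sqrt{\mu_j}$, i.e.\ the singular values of $T$, and the isometry $U$ transports this rigidly into the $N$-dimensional subspace $\mathrm{range}(T) \subset \HH^M$. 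What the paper's citation-based proof buys is brevity; what yours buys is an explicit equation for the ellipsoid that dovetails with Lemma~\ref{lemma100} (whose stated equation, note, has a typo --- the denominator should be $\lambda_i$, not $\lambda_1$ --- and your derivation confirms the corrected form, with the eigenvalues of $S$ being precisely the nonzero eigenvalues of the Gram matrix). Your closing caveat about the complex case is also a point of genuine care the paper glosses over: the unit sphere of $\CC^N$ has real dimension $2N-1$, so the surface-dimension count ``$N-1$'' in the statement is literal only for $\HH = \RR$, and your observation that the structural argument survives with the dimension adjusted is the right way to read the lemma over $\CC$.
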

\begin{proof}
\label{elliplem2}
By lemma 3.24 of \cite{frames_for_undergraduates}, we know that $T$ is injective on $\HH^N$;  limiting its domain to $S_{N-1}$ retains injectivity.  If we limit the codomain to the range of $T$, so that we have the mapping $T: \HH^N \mapsto Range\ T$, then we have that $T S_{N-1}$ is an ellipsoid in an $N$-dimensional subspace of $\HH^M$ (see chapter 7 of \cite{frames_for_undergraduates}).  If we expand the codomain to $\HH^M$, we have an $N-1$ dimensional ellipsoidal manifold embedded in $\HH^M$. 
\end{proof}

\begin{remark}
\label{invT}
We use the notation ``$T^{-1}$" to indicate the inverse of the bijection $T \big{|}_{S^{N-1}}$, as above.
\end{remark}

\begin{lemma}\label{lemma100}
Let $G$ be the Gram matrix of our frame.  Arrange the eigenvalues of $G$ so that $\lambda_1 \ge \lambda_2 \ge \cdots \ge \lambda_N > 0$ and $\lambda_j = 0$ for $N < j \le M$, and denote the corresponding eigenvectors with $\{e_i\}_{i=1}^M$.  Then the ellipsoid $T S_{N-1}$ is the set of vectors $v = v_1 e_1 + \cdots v_N e_N$ where $\sum_{i=1}^N \frac{|v_i|^2}{\lambda_1} = 1$.
\end{lemma}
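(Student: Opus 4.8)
The plan is to make explicit the ellipsoid already produced by Lemma~\ref{elliplem1}, by diagonalizing the Gram matrix $G=TT^*$ and reading off its principal axes as the eigenvectors $\{e_i\}$ and the reciprocals $1/\lambda_i$ of its nonzero eigenvalues as the squared semi-axis data. (I read the stated equation with $\lambda_i$ in place of $\lambda_1$ in the denominator, the latter being a typo.)

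First I would isolate the subspace containing the ellipsoid. Since $\{\phi_i\}_{i=1}^M$ spans $\HH^N$, the analysis operator $T$ is injective, and $\mathrm{range}(T)=\mathrm{range}(TT^*)=\mathrm{range}(G)$. As $G$ is positive semidefinite of rank $N$ with eigenpairs $(\lambda_i,e_i)$ and $\lambda_i>0$ exactly for $1\le i\le N$, its range is $\mathrm{span}\{e_1,\dots,e_N\}$. Thus every $v\in TS_{N-1}$ has the form $v=\sum_{i=1}^N v_i e_i$, which fixes the $N$-dimensional ambient subspace; no $e_j$ with $j>N$ ever appears.

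The heart of the proof is a singular value decomposition $T=U\Sigma V^*$. Here the columns of the $M\times M$ unitary $U$ are the eigenvectors $\{e_i\}$ of $TT^*=G$, the columns of the $N\times N$ unitary $V$ are an orthonormal basis $\{w_i\}_{i=1}^N$ of $\HH^N$ (the eigenvectors of the frame operator $S=T^*T$), and the diagonal of $\Sigma$ carries the singular values $\sigma_i=\sqrt{\lambda_i}$, since the nonzero eigenvalues of $TT^*$ and of $T^*T$ coincide. Now take any unit vector $\psi=\sum_{i=1}^N c_i w_i$ with $\sum_{i=1}^N\abs{c_i}^2=1$. Applying $T=U\Sigma V^*$ gives $T\psi=\sum_{i=1}^N \sqrt{\lambda_i}\,c_i\,e_i$, so the coordinates of $v=T\psi$ along $\{e_i\}$ are $v_i=\sqrt{\lambda_i}\,c_i$. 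Hence $\sum_{i=1}^N \abs{v_i}^2/\lambda_i=\sum_{i=1}^N\abs{c_i}^2=1$, which is the claimed equation. Conversely, given any $v=\sum_{i=1}^N v_i e_i$ with $\sum_{i=1}^N\abs{v_i}^2/\lambda_i=1$, the vector $\psi=\sum_{i=1}^N (v_i/\sqrt{\lambda_i})\,w_i$ has $\|\psi\|^2=\sum_{i=1}^N\abs{v_i}^2/\lambda_i=1$, so $\psi\in S_{N-1}$ and $T\psi=v$; this shows $TS_{N-1}$ is exactly the stated ellipsoid.

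The only real subtlety is bookkeeping rather than a genuine obstacle: one must keep the two families of eigenvectors apart — the $\{e_i\}\subset\HH^M$ are eigenvectors of $TT^*=G$, while the $\{w_i\}\subset\HH^N$ are eigenvectors of the frame operator $T^*T$ — and verify the identification $\sigma_i^2=\lambda_i$ of singular values with nonzero Gram eigenvalues. Alternatively, one can avoid invoking the SVD explicitly: setting $v_i=\ip{v}{e_i}=\ip{\psi}{T^*e_i}$ and noting that $T^*e_i$ is an eigenvector of $S=T^*T$ with $\|T^*e_i\|^2=\ip{TT^*e_i}{e_i}=\lambda_i$, the vectors $\{T^*e_i/\sqrt{\lambda_i}\}_{i=1}^N$ form an orthonormal basis of $\HH^N$, and the Parseval identity for $\psi$ in this basis yields $\sum_{i=1}^N\abs{v_i}^2/\lambda_i=\|\psi\|^2=1$ directly.
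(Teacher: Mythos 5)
Your proof is correct (and your reading of $\lambda_1$ as a typo for $\lambda_i$ is certainly right; with $\lambda_1$ fixed the set would be a round sphere intersected with $\mathrm{range}(T)$, which is false unless the frame is tight), but it takes a genuinely different route from the paper. The paper's printed proof is a one-line citation --- ``this again follows from the Lemma \ref{lemma100} and Theorem \ref{samerankfamily1}'' --- which as written is circular (it cites the lemma itself, presumably intending Lemma \ref{elliplem1}); the intended argument is to border the Gram matrix, observing that $v = T\psi$ with $\|\psi\| = 1$ makes
\[
\begin{bmatrix} G & v \\ v^* & 1 \end{bmatrix}
\]
the Gram matrix of $\{\phi_1,\dots,\phi_M,\psi\}$, whose rank cannot exceed $N = \mathrm{rank}\, G$ since the frame already spans $\HH^N$, and then to invoke Theorem \ref{samerankfamily1} to conclude that the rank-preserving border vectors are exactly $v = \sum_{i=1}^N a_i\sqrt{\lambda_i}\,e_i$ with $\sum_{i=1}^N \abs{a_i}^2 = 1$, i.e.\ the stated ellipsoid. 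You instead compute $TS_{N-1}$ directly via the singular value decomposition (or, in your cleaner variant, via Parseval in the orthonormal basis $\{T^*e_i/\sqrt{\lambda_i}\}_{i=1}^N$, which neatly sidesteps the multiplicity bookkeeping in matching SVD columns to the given eigenvectors). Each approach has its merits: yours is elementary, self-contained, and constructive --- it exhibits the bijective parametrization $\psi \mapsto T\psi$ of the ellipsoid and proves both inclusions explicitly, which the paper never does; the paper's route keeps everything in the Gram-matrix language used uniformly in Section 9 (it is the same mechanism as Proposition \ref{bsimplexprop}), but its converse direction silently requires the realization step that rank equality of the bordered PSD matrix actually produces a unit vector $\psi \in \HH^N$ with $T\psi = v$, a point your argument establishes for free by exhibiting $\psi = \sum_{i=1}^N (v_i/\sqrt{\lambda_i})\,w_i$.
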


\begin{proof}
This again follows from the Lemma \ref{lemma100} and Theorem \ref{samerankfamily1}.
\end{proof}

\begin{remark}
\label{manifoldsremark}
For a given frame, we denote the quartic manifold given by the Gram matrix of outer products implicitly stated in theorem \ref{bsimplexprop} and explicitly stated in the following remark \ref{forthordervarietyrmk} as $\mu^4_{\{ \phi_i \}_{i=1}^M}$;  denote the second order (elliptic) manifold in lemmas \ref{elliplem1} and \ref{elliplem2} as $\mu^2_{\{\phi_i\}_{i=1}^M}$.
\end{remark}

\subsection{A Characterization of All Frames That Yield Dependent Outer Products with Cardinality less than $\mathrm{dim}\ \mathrm{sym} (\HH^{N \times N}) $ }

\begin{theorem}
\label{classificationtheorem}
Let $M < \mathrm{dim}\ \mathrm{sym}(\HH^{N \times N})$.  If $\{\phi_i \phi_i^* \}_{i=1}^M$ is independent, the set of vectors in $S_{N - 1}$ that will yield a dependent set of outer products will be $T^{-1}(\mu^2_{\{\phi_i\}_{i=1}^M} \cap\mu^4_{\{\phi_i\}_{i=1}^M} )$, which will be compact in the Euclidean topology.
\end{theorem}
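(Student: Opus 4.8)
The plan is to assemble the characterization directly from the two conditions already isolated in the preceding results. The key observation is that a new unit norm vector $\phi_{M+1}\in S_{N-1}$ produces a dependent set $\{\phi_i\phi_i^*\}_{i=1}^{M+1}$ precisely when two things happen simultaneously: first, $\phi_{M+1}$ genuinely lives in $\HH^N$ (equivalently, $T\phi_{M+1}$ lies on the ellipsoid $TS_{N-1}$, which is exactly $\mu^2_{\{\phi_i\}_{i=1}^M}$ by Lemma~\ref{elliplem1} and Lemma~\ref{lemma100}); and second, adding $\phi_{M+1}\phi_{M+1}^*$ does not increase the rank of the outer-product Gram matrix, which by Proposition~\ref{bsimplexprop} and Remark~\ref{forthordervarietyrmk} is exactly the condition that $T\phi_{M+1}$ lie on the quartic manifold $\mu^4_{\{\phi_i\}_{i=1}^M}$. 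So first I would state the equivalence: $\phi_{M+1}$ yields a dependent set if and only if $T\phi_{M+1}\in \mu^2_{\{\phi_i\}_{i=1}^M}\cap\mu^4_{\{\phi_i\}_{i=1}^M}$.

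Next I would use injectivity of $T$ on $S_{N-1}$, established in Lemma~\ref{elliplem1}, together with the inverse notation $T^{-1}$ from Remark~\ref{invT}. Because $T$ restricted to $S_{N-1}$ is a bijection onto the ellipsoid $TS_{N-1}=\mu^2_{\{\phi_i\}_{i=1}^M}$, the intersection $\mu^2_{\{\phi_i\}_{i=1}^M}\cap\mu^4_{\{\phi_i\}_{i=1}^M}$ is entirely contained in the range of $T\big|_{S_{N-1}}$, so pulling back by $T^{-1}$ is well-defined and yields exactly the set of $\phi_{M+1}\in S_{N-1}$ producing dependence. This gives the set-theoretic identity claimed in the theorem.

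The remaining task is compactness in the Euclidean topology. The manifold $\mu^2_{\{\phi_i\}_{i=1}^M}=TS_{N-1}$ is compact, being the continuous image of the compact sphere $S_{N-1}$. The quartic $\mu^4_{\{\phi_i\}_{i=1}^M}$ is the preimage of the closed set $\{1\}$ under the continuous quartic map $v\mapsto\sum_{i=1}^M |\langle v\circ\overline v,e_i'\rangle|^2/\lambda_i'$, hence is closed. Therefore the intersection is a closed subset of a compact set, and is compact. Finally, since $T^{-1}=\bigl(T\big|_{S_{N-1}}\bigr)^{-1}$ is continuous (it is the inverse of a continuous bijection between compact Hausdorff spaces, hence a homeomorphism onto its image by Lemma~\ref{elliplem1}), the continuous image $T^{-1}(\mu^2_{\{\phi_i\}_{i=1}^M}\cap\mu^4_{\{\phi_i\}_{i=1}^M})$ of a compact set is compact.

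I expect the only delicate point to be the careful bookkeeping of which ``dependence'' we mean and ensuring the hypothesis $M<\dim\mathrm{sym}(\HH^{N\times N})$ is used correctly: it guarantees that Proposition~\ref{bsimplexprop} applies (so that the rank criterion really does characterize dependence of the augmented set) and that the independent set $\{\phi_i\phi_i^*\}_{i=1}^M$ has a strictly positive-definite Gram matrix, so every $\lambda_i'>0$ and the quartic form in \eqref{forthordervariety} is well-defined. The substantive conceptual content is entirely carried by Theorem~\ref{samerankfamily1} and Proposition~\ref{bsimplexprop}; the final theorem is essentially a repackaging of those into geometric language, so the main obstacle is not a hard estimate but stating the equivalence cleanly and verifying that $T^{-1}$ genuinely transports compactness, which follows because $T\big|_{S_{N-1}}$ is a homeomorphism onto the ellipsoid.
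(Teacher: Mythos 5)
Your proposal is correct and follows essentially the same route as the paper's proof: identify the dependence locus as $\mu^2_{\{\phi_i\}_{i=1}^M}\cap\mu^4_{\{\phi_i\}_{i=1}^M}$ via Proposition~\ref{bsimplexprop} and Remark~\ref{forthordervarietyrmk}, then pull back through the injective restriction $T\big|_{S_{N-1}}$ and transport compactness. If anything, your write-up is slightly more careful than the paper's at the topological step --- the paper simply asserts both manifolds are closed and bounded and cites injectivity, whereas you obtain compactness of the intersection as a closed subset of the compact ellipsoid $TS_{N-1}$ and explicitly justify the continuity of $T^{-1}$ by the continuous-bijection-from-compact-to-Hausdorff argument.
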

\begin{proof}
Remembering the notation from remark~\ref{manifoldsremark}, we see that $\mu^2_{\{\phi_i\}_{i=1}^M} \cap\mu^4_{\{\phi_i\}_{i=1}^M}$ are exactly the portion of the image of $T$ that corresponds to the dependent outer products.  Since the manifolds $\mu^2_{\{\phi_i\}_{i=1}^M}$ and $\mu^4_{\{\phi_i\}_{i=1}^M}$ are closed and bounded within a Euclidean space, they are compact and likewise their intersection $\mu^2_{\{\phi_i\}_{i=1}^M} \cap\mu^4_{\{\phi_i\}_{i=1}^M}$ is compact.  By the injectivity of $T$ on $S_{N-1}$ and remark \ref{invT}, we see that $T^{-1} (\mu^2_{\{\phi_i\}_{i=1}^M} \cap\mu^4_{\{\phi_i\}_{i=1}^M})$ forms a compact subset of $S_{N-1}$.
\end{proof}

\subsection{A Geometric Result}
While it is beyond the scope of this paper to fully analyze this, we find that carrying this on for frames with induced outer product sets of dimensionality equal to $\mathrm{dim}\ \mathrm{sym} ( \HH^{N \times N} )$ yields a possibly interesting geometric result due to the loss of independence in the induced outer products.

\begin{proposition}
Suppose that $\{\phi_i\}_{i=1}^M$ is a unit norm frame for $\HH^N$ where $\mathrm{dim} \ \mathrm{span} \{ \phi_i \phi_i^* \}_{i=1}^M = \mathrm{dim}\ \mathrm{sym} (\HH^{N \times N})$.  Then  $\mu^2_{\{\phi_i\}_{i=1}^M} \subseteq\mu^4_{\{\phi_i\}_{i=1}^M}   $.
\end{proposition}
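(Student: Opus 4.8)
The plan is to show that every point of the ellipsoid $\mu^2_{\{\phi_i\}_{i=1}^M}$ automatically satisfies the quartic identity \eqref{forthordervariety} cutting out $\mu^4_{\{\phi_i\}_{i=1}^M}$, so that the containment is purely structural. By Lemma~\ref{lemma100} (equivalently Lemma~\ref{elliplem1}), a point $v\in\mu^2_{\{\phi_i\}_{i=1}^M}$ is exactly one of the form $v=T\phi$ for some unit vector $\phi\in S_{N-1}$. First I would compute the Hadamard square of such a point: since $(T\phi)_i=\langle\phi,\phi_i\rangle$, Lemma~\ref{pc2} gives $(T\phi\circ\overline{T\phi})_i=|\langle\phi,\phi_i\rangle|^2=\langle\phi_i\phi_i^*,\phi\phi^*\rangle_F$, so that $w:=T\phi\circ\overline{T\phi}$ is precisely the analysis of the self-adjoint operator $\phi\phi^*$ against the outer-product sequence $\{\phi_i\phi_i^*\}_{i=1}^M$. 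Thus $\left[\begin{smallmatrix} G_{op} & w \\ w^* & 1\end{smallmatrix}\right]$ is exactly the Gram matrix of the augmented family $\{\phi_i\phi_i^*\}_{i=1}^M\cup\{\phi\phi^*\}$, its corner entry being $\|\phi\phi^*\|_F^2=1$.

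The key observation is that the hypothesis $\dim\mathrm{span}\{\phi_i\phi_i^*\}_{i=1}^M=\dim\mathrm{sym}(\HH^{N\times N})$ forces $\phi\phi^*\in\mathrm{span}\{\phi_i\phi_i^*\}_{i=1}^M$ for \emph{every} unit $\phi$, simply because $\phi\phi^*$ is itself self-adjoint and the span is now all of $\mathrm{sym}(\HH^{N\times N})$. Consequently the augmented family spans the same subspace as $\{\phi_i\phi_i^*\}_{i=1}^M$, so its Gram matrix has the same rank as $G_{op}$, i.e. $\mathrm{rk}\left[\begin{smallmatrix} G_{op} & w \\ w^* & 1\end{smallmatrix}\right]=\mathrm{rk}\,G_{op}$. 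Writing $\phi\phi^*=\sum_i c_i\phi_i\phi_i^*$ shows $w=G_{op}c$, so $w$ lies in the column space of $G_{op}$ (equivalently, $w$ has no component along the zero-eigenvectors $e'_i$ with $i\notin I_+$). Applying the converse direction of Theorem~\ref{samerankfamily1} with $T=G_{op}$ and the vector $w$ then yields $w=\sum_{i\in I_+}a_i\sqrt{\lambda'_i}\,e'_i$ with $\sum_{i\in I_+}|a_i|^2=1$; since $a_i=\langle w,e'_i\rangle/\sqrt{\lambda'_i}$, this is exactly the identity $\sum_{i\in I_+}\frac{|\langle w,e'_i\rangle|^2}{\lambda'_i}=1$ defining $\mu^4_{\{\phi_i\}_{i=1}^M}$. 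Hence $v=T\phi\in\mu^4_{\{\phi_i\}_{i=1}^M}$, and as $v$ was an arbitrary point of the ellipsoid, $\mu^2_{\{\phi_i\}_{i=1}^M}\subseteq\mu^4_{\{\phi_i\}_{i=1}^M}$. If one prefers to avoid Theorem~\ref{samerankfamily1}, the same identity drops out of a one-line spectral computation using $w=G_{op}c$: $\sum_{i\in I_+}\frac{|\langle w,e'_i\rangle|^2}{\lambda'_i}=\sum_{i\in I_+}\lambda'_i|\langle c,e'_i\rangle|^2=c^*G_{op}c=\langle\phi\phi^*,\phi\phi^*\rangle_F=1$.

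The step needing real care is that this proposition, unlike Theorem~\ref{classificationtheorem}, no longer imposes $M\le\dim\mathrm{sym}(\HH^{N\times N})$, so the outer products need not be independent and $G_{op}$ may be singular (this is the announced ``loss of independence''). This is why I would use the eigenvalue-indexed form of $\mu^4$ with the sum restricted to the positive-eigenvalue set $I_+$, and why verifying $w\in\mathrm{col}(G_{op})$ is essential: it guarantees $w$ admits the representation demanded by Theorem~\ref{samerankfamily1}, whose statement is already valid for arbitrary positive semidefinite $T$. Once that bookkeeping is in place, no estimates remain — the containment is simply the geometric shadow of the fact that, in this maximal-span regime, $\phi\phi^*$ always lies in $\mathrm{span}\{\phi_i\phi_i^*\}_{i=1}^M$, so adjoining any unit vector produces a dependent outer-product set.
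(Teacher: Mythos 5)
Your proposal is correct, and it reaches the statement by a more self-contained route than the paper does. The paper's own proof stays at the top level: since $\mathrm{span}\{\phi_i\phi_i^*\}_{i=1}^M$ is already all of $\mathrm{sym}(\HH^{N\times N})$, adjoining $vv^*$ for \emph{any} unit $v$ produces a dependent set, so by the dependence criterion of Proposition~\ref{bsimplexprop} and Remark~\ref{forthordervarietyrmk} one gets $Tv\in\mu^4_{\{\phi_i\}_{i=1}^M}$ for every $v\in S_{N-1}$, i.e.\ $T^{-1}\bigl(\mu^2_{\{\phi_i\}_{i=1}^M}\cap\mu^4_{\{\phi_i\}_{i=1}^M}\bigr)=S_{N-1}$, which gives the containment. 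You prove the same dependence-implies-quartic step, but you unpack the citation one level: instead of invoking Proposition~\ref{bsimplexprop} --- whose stated hypotheses ($\{\phi_i\phi_i^*\}_{i=1}^M$ independent and $M+1\le\dim\mathrm{sym}(\HH^{N\times N})$) do not literally hold in this maximal-span regime, where necessarily $M\ge\dim\mathrm{sym}(\HH^{N\times N})$ --- you apply the converse half of Theorem~\ref{samerankfamily1} directly to the augmented Gram matrix $\left[\begin{smallmatrix} G_{op} & w\\ w^* & 1\end{smallmatrix}\right]$ with $w=T\phi\circ\overline{T\phi}$, after checking the corner entry is $\|\phi\phi^*\|_F^2=1$ and that maximal span forces rank equality. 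Your restriction of the quartic sum to $I_+$ and the verification $w=G_{op}c$ (so $w$ lies in the column space of $G_{op}$) handle precisely the possible singularity of $G_{op}$ that the paper's appeal to the earlier results glosses over; this is a real repair, not just bookkeeping. Finally, your closing identity $\sum_{i\in I_+}|\langle w,e_i'\rangle|^2/\lambda_i'=c^TG_{op}c=\|\phi\phi^*\|_F^2=1$ is a second, fully independent proof that bypasses Theorem~\ref{samerankfamily1} altogether, needing only the spectral decomposition of $G_{op}$. What the paper's argument buys is brevity and reuse of the classification machinery; what yours buys is rigor at exactly the point where the standing hypotheses of that machinery fail, plus a one-line elementary alternative.
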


\begin{proof}
We already know that if we expand the frame $\{\phi_i\}_{i=1}^M$ to the point where \emph{any} additional vector $v \in S_{N-1}$ induces a \emph{dependent} outer product sequence $\{\phi_i \phi_i^*\}_{i=1}^M \cup \{v v^*\}$, we will have $T v \in \mu^4_{\{\phi_i\}_{i=1}^M}$.  But this implies that $T^{-1} (\mu^2_{\{\phi_i\}_{i=1}^M} \cap \mu^4_{\{\phi_i\}_{i=1}^M}  ) = T^{-1}(\mu^2_{\{\phi_i\}_{i=1}^M}) = S_{N-1}$.  The conclusion follows.
\end{proof}

\begin{remark}
This gives us an instance where an elliptic manifold with a surface that is locally Euclidean of dimension $(N - 1)$ embedded within $\HH^M$, which is contained entirely within a fourth order manifold of dimension $(M - 1)$ also embedded within the same $\HH^M$, where $M>N$.
\end{remark}

\section{Expanding Positive Semi-Definite Matrices While Preserving Rank}

\subsection{Main Theorem on Positive Semi-Definite Matrices}\label{big_ol_proof}
Now we prove Theorem~\ref{samerankfamily1}. We prove this Theorem in the form of two propositions (``forwards" and ``converse").  Likewise, we prove several lemmas for each proposition.

\subsection{Necessary Lemmas for ``Forwards" Proposition}

\begin{lemma}
Let $T$ be an $N \times N$ positive semi-definite matrix with eigenvector $e_i$ and associated eigenvalue $\lambda_i > 0$.  Then we will have

$$rank\ \begin{bmatrix} T & \sqrt{\lambda_i} e_i \\ (\sqrt{\lambda_i} e_i )^* & 1 \end{bmatrix} = rank\ T$$.
\end{lemma}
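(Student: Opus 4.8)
The plan is to exploit the Hermitian structure of the augmented matrix and reduce it to block-diagonal form by a rank-preserving congruence, thereby sidestepping any appeal to the (possibly nonexistent) inverse of $T$. Write $v = \sqrt{\lambda_i}\,e_i$ and normalize so that $\|e_i\|=1$, and set the candidate coefficient vector $c = \frac{1}{\sqrt{\lambda_i}}\,e_i$, which is well defined precisely because $\lambda_i>0$. Everything hinges on three elementary identities coming from $Te_i=\lambda_i e_i$ together with the self-adjointness of $T$: namely $Tc=v$, $\;c^*T=v^*$, and $v^*c=1$. The first holds since $Tc=\frac{\lambda_i}{\sqrt{\lambda_i}}e_i=v$; the second since $e_i^*T=\lambda_i e_i^*$ gives $c^*T=\frac{\lambda_i}{\sqrt{\lambda_i}}e_i^*=v^*$; and the third is $v^*c=\langle e_i,e_i\rangle=1$.

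Next I would introduce the unipotent (hence invertible) triangular matrix
\[ P = \begin{bmatrix} I_N & -c \\ 0 & 1 \end{bmatrix}, \]
and compute the congruence $P^*MP$, where $M=\begin{bmatrix} T & v \\ v^* & 1 \end{bmatrix}$ is the augmented matrix (which is itself Hermitian). Using $Tc=v$ and $v^*c=1$ one finds $MP=\begin{bmatrix} T & 0 \\ v^* & 0 \end{bmatrix}$, and then using $c^*T=v^*$ one obtains
\[ P^*MP = \begin{bmatrix} T & 0 \\ 0 & 0 \end{bmatrix}. \]
Since $P$ is invertible, congruence preserves rank, so $\rk M = \rk \diag(T,0) = \rk T$, which is exactly the claim.

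I expect the only genuine subtlety to be the possible singularity of $T$: a naive Schur-complement reduction would want to write the $(2,2)$ entry as $1-v^*T^{-1}v$, which is meaningless when $T$ is not invertible. The remedy — and the crux of the whole argument — is that because $\lambda_i>0$ we have an explicit preimage $c=\lambda_i^{-1/2}e_i$ of $v$ lying in the range of $T$, so the elimination goes through verbatim without ever inverting $T$. The remaining work is purely the three displayed identities plus two block multiplications, all routine. Equivalently, one could argue directly that the last column $\begin{bmatrix} v \\ 1 \end{bmatrix}$ equals $\begin{bmatrix} T \\ v^* \end{bmatrix}c$, a combination of the first $N$ columns, while the last row $v^*=\frac{1}{\sqrt{\lambda_i}}e_i^*T$ already lies in the row space of $T$; I prefer the congruence phrasing because it disposes of the row and the column simultaneously and makes the rank equality transparent.
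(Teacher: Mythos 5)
Your proof is correct, but it takes a genuinely different route from the paper's. The paper argues by range membership in two stages: it first observes that $\sqrt{\lambda_i}\,e_i = T(e_i/\sqrt{\lambda_i}) \in \mathrm{Range}\ T$, so appending the row $(\sqrt{\lambda_i}e_i)^*$ leaves the rank unchanged, and then exhibits a vector $w$ with $\begin{bmatrix} T \\ (\sqrt{\lambda_i}e_i)^* \end{bmatrix} w = \begin{bmatrix} \sqrt{\lambda_i}e_i \\ 1 \end{bmatrix}$, so appending the last column leaves it unchanged as well. Your single congruence $P^*MP = \diag(T,0)$ with the unipotent $P$ accomplishes both steps at once, resting on exactly the same three identities ($Tc=v$, $c^*T=v^*$, $v^*c=1$ with $c=\lambda_i^{-1/2}e_i$) that drive the paper's computation. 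What your phrasing buys: it is manifestly symmetric, disposing of row and column simultaneously via rank invariance under congruence, and it scales verbatim to the general setting --- taking $c=\sum_{i\in I_+}(a_i/\sqrt{\lambda_i})\,e_i$ yields the full forwards direction of Theorem~\ref{samerankfamily1} (Proposition~\ref{forwardsprop}) in one computation, whereas the paper reproves the one-eigenvector, two-eigenvector, and general cases separately. Incidentally, your explicit verification of the identities also repairs a small slip in the paper's proof of this lemma, which sets $w=\sqrt{\lambda_i}e_i$ even though $Tw=\lambda_i^{3/2}e_i\neq\sqrt{\lambda_i}e_i$ in general; the correct witness is $w=e_i/\sqrt{\lambda_i}$, precisely your $c$. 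One point you were right to make explicit: the statement tacitly requires $\|e_i\|=1$ (otherwise the $(2,2)$ entry $1$, and with it the conclusion, is off by a factor of $\|e_i\|^2$), consistent with the paper's spectral-theorem normalization.
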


\begin{proof}
By the spectral theorem, we know that $T$ has $N$ eigenvectors $\{e_j\}_{j=1}^N$ with real-valued eigenvalues $\{\lambda_j\}_{j=1}^N$, and we have the representation $T = \sum_{j = 1}^N \lambda_j P_j$, where $P_j$ is the projection onto $e_j$.  Since by the hypothesis $\lambda_i > 0$, we have $T \left( (1/\sqrt{\lambda_i}) e_i \right) = \sqrt{\lambda_i} e_i$  This means that $\sqrt{\lambda_i} e_i \in Range\ T$.  Thus, $rank\ T = rank\ \begin{bmatrix} T \\ (\sqrt{\lambda_i} e_i)^* \end{bmatrix}$.\\

To complete the lemma, we show that the existence of a vector $w$ such that $\begin{bmatrix} T \\ (\sqrt{\lambda_i} e_i)^* \end{bmatrix} w = \begin{bmatrix} \sqrt{\lambda_i} e_i \\ 1 \end{bmatrix}$.  We set $w = \sqrt{\lambda_i} e_i$;  this yields

$$ \begin{bmatrix} T \\ (\sqrt{\lambda_i} e_i)^* \end{bmatrix} w = \begin{bmatrix} \sqrt{\lambda_i} e_i \\ 1 \end{bmatrix} $$

So we have that $\begin{bmatrix} \sqrt{\lambda_i} e_i \\ 1 \end{bmatrix} \in Range\ \begin{bmatrix} T \\ (\sqrt{\lambda_i} e_i)^* \end{bmatrix}$;  this implies that $$rank\ \begin{bmatrix} T & \sqrt{\lambda_i} e_i \\ (\sqrt{\lambda_i} e_i)^* & 1 \end{bmatrix} = rank\ \begin{bmatrix} T \\ (\sqrt{\lambda_i} e_i)^* \end{bmatrix}.$$

By our prior result we can conclude:

$$rank\ \begin{bmatrix} T & \sqrt{\lambda_i} e_i \\ (\sqrt{\lambda_i} e_i )^* & 1 \end{bmatrix} = rank\ T.$$
\end{proof}

\begin{lemma}
\label{poseigs1}
Let $T$ be an $N \times N$ positive semi-definite matrix, with distinct eigenvectors $e_i$ and $e_j$ with positive eigenvalues.
Then, for any two scalars $a,b$ such that $|a|^2 + |b|^2 = 1$, we will have:

$$rank\ T = rank\ \begin{bmatrix} T & a\sqrt{\lambda_i} e_i  + b \sqrt{\lambda_j} e_j \\ (a\sqrt{\lambda_i} e_i  + b \sqrt{\lambda_j} e_j)^* & 1 \end{bmatrix}$$
\end{lemma}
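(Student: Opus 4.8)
The plan is to follow exactly the two-step template of the preceding single-eigenvector lemma, now applied to the combined vector $v := a\sqrt{\lambda_i}\,e_i + b\sqrt{\lambda_j}\,e_j$. First I would observe that since $\lambda_i,\lambda_j > 0$, both $\sqrt{\lambda_i}\,e_i$ and $\sqrt{\lambda_j}\,e_j$ lie in $Range\ T$ (indeed $T(\lambda_i^{-1/2}e_i) = \sqrt{\lambda_i}\,e_i$, and similarly for $j$); hence their linear combination $v$ lies in $Range\ T$ as well. Because $T$ is self-adjoint, $Range\ T = Range\ T^*$, so the row $v^*$ lies in the row space of $T$, and appending it leaves the rank unchanged: $rank\ T = rank\,\begin{bmatrix} T \\ v^* \end{bmatrix}$.

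The second step is to exhibit a vector $w$ witnessing that the last column of the bordered matrix already lies in the column space of $\begin{bmatrix} T \\ v^* \end{bmatrix}$, i.e. $\begin{bmatrix} T \\ v^* \end{bmatrix} w = \begin{bmatrix} v \\ 1 \end{bmatrix}$. The natural candidate, generalizing the earlier proof, is $w := a\,\lambda_i^{-1/2}e_i + b\,\lambda_j^{-1/2}e_j$. A direct computation gives $Tw = a\sqrt{\lambda_i}\,e_i + b\sqrt{\lambda_j}\,e_j = v$ for the top block, while for the bottom block $v^*w = \langle w, v\rangle$ expands into four terms; invoking the spectral theorem to take $e_i,e_j$ orthonormal kills the cross terms, and the surviving diagonal terms collapse to $|a|^2\langle e_i,e_i\rangle + |b|^2\langle e_j,e_j\rangle = |a|^2 + |b|^2 = 1$.

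With both blocks verified, $\begin{bmatrix} v \\ 1 \end{bmatrix} \in Range\,\begin{bmatrix} T \\ v^* \end{bmatrix}$, so adjoining this column does not raise the rank, yielding $rank\,\begin{bmatrix} T & v \\ v^* & 1 \end{bmatrix} = rank\,\begin{bmatrix} T \\ v^* \end{bmatrix} = rank\ T$, which is the claim.

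This argument is almost purely mechanical once the candidate $w$ is chosen, so there is no deep obstacle; the only point requiring care is the normalization. The hypothesis $|a|^2 + |b|^2 = 1$ is precisely what forces the $(N+1,N+1)$ entry $v^*w$ to equal $1$ rather than some other constant, and the orthonormality of the eigenvectors (guaranteed by the spectral theorem, even when $\lambda_i = \lambda_j$) is what eliminates the cross terms $\langle e_i,e_j\rangle$. I would flag these two facts explicitly, since an arbitrary scaling of $a,b$ or non-orthogonal eigenvectors would break the clean collapse to $1$, and the identity $rank\ M = rank\ T$ could then fail.
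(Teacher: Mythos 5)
Your proof is correct and follows essentially the same route as the paper's: both exhibit the witness vector $w = a\lambda_i^{-1/2}e_i + b\lambda_j^{-1/2}e_j$, first to show $v = a\sqrt{\lambda_i}\,e_i + b\sqrt{\lambda_j}\,e_j \in \mathrm{Range}\ T$ (so appending the row $v^*$ leaves the rank unchanged), and then to show that the bordered column with last entry $1$ lies in the range of the taller matrix, the normalization $|a|^2+|b|^2=1$ producing that final entry. Your explicit flagging of the eigenvectors' orthonormality (including the repeated-eigenvalue case) and of why the normalization is essential are careful points the paper leaves implicit.
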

\begin{proof}
We proceed as in the prior theorem.  First, we check that $rank\ T = rank\ [T \ \ (a\sqrt{\lambda_i} e_i  + b \sqrt{\lambda_j} e_j)]$.  We see that $T\left( (a/\sqrt{\lambda_i}) e_i  + (b /\sqrt{\lambda_j}) e_j \right) = (a\sqrt{\lambda_i} e_i  + b \sqrt{\lambda_j} e_j) \in Range\ T$, which yields $$rank\ T = rank\ [T \ \ (a\sqrt{\lambda_i} e_i  + b \sqrt{\lambda_j} e_j)] = rank\ \begin{bmatrix} T \\ (a\sqrt{\lambda_i} e_i  + b \sqrt{\lambda_j} e_j)^* \end{bmatrix}$$

We can now see that $$\begin{bmatrix} T \\ (a\sqrt{\lambda_i} e_i  + b \sqrt{\lambda_j} e_j)^* \end{bmatrix} \left( (a/\sqrt{\lambda_i}) e_i  + (b /\sqrt{\lambda_j}) e_j \right) = \begin{bmatrix} (a\sqrt{\lambda_i} e_i  + b \sqrt{\lambda_j} e_j) \\ 1 \end{bmatrix}$$

which implies $$\begin{bmatrix} (a\sqrt{\lambda_i} e_i  + b \sqrt{\lambda_j} e_j) \\ 1 \end{bmatrix} \in Range\ \begin{bmatrix} T \\ (a\sqrt{\lambda_i} e_i  + b \sqrt{\lambda_j} e_j)^* \end{bmatrix}$$

and so we have 
\begin{align*}
& rank \begin{bmatrix} T \\ (a\sqrt{\lambda_i} e_i  + b \sqrt{\lambda_j} e_j)^*  \end{bmatrix}=\\
& 
rank \begin{bmatrix} T & a\sqrt{\lambda_i} e_i  + b \sqrt{\lambda_j} e_j \\ (a\sqrt{\lambda_i} e_i  + b \sqrt{\lambda_j} e_j)^* & 1 \end{bmatrix}
\end{align*}

the conclusion directly follows.
\end{proof}

\subsection{First proposition}
This is the ``forwards" implication of theorem (\ref{samerankfamily1})  (``$\Rightarrow$").
\begin{proposition}
\label{forwardsprop}
Let $T$ be a $N \times N$ positive semi-definite matrix.  Let $\{e_i\}_{i=1}^N$ be the eigenvectors of $T$ with the corresponding eigenvalues $\{\lambda_i\}_{i=1}^N$.  Let $I_+ \subset \{1, \ldots, N\}$ be the index for the eigenvalues with positive eigenvectors, i.e., $i \in I_+ \Leftrightarrow \lambda_i > 0$.\\
Let $\{ a_i \}_{i \in I_+}$ be a sequence of scalars such that $\sum_{i \in I_+} |a_i|^2 = 1$.  Then, for the vector $v = \sum_{i \in I_+} a_i \sqrt{\lambda_i} e_i$ , we will have:

$$rank\ \begin{bmatrix} T & v \\ v^* & 1 \end{bmatrix} = rank\ T$$
\end{proposition}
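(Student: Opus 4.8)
The plan is to mirror the two preceding lemmas, which established the result for $v$ a single scaled eigenvector and for $v$ a normalized combination of two scaled eigenvectors, and push the same argument through for an arbitrary normalized combination indexed by $I_+$. The entire proof rests on exhibiting a single witness vector $w \in \HH^N$ that simultaneously certifies two rank facts: that $v$ lies in the range of $T$ (so that adjoining $v$ as a column, and $v^*$ as a row, does not raise the rank), and that the augmented column $\begin{bmatrix} v \\ 1 \end{bmatrix}$ lies in the range of $\begin{bmatrix} T \\ v^* \end{bmatrix}$.

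Concretely, I would set
\[ w = \sum_{i \in I_+} \frac{a_i}{\sqrt{\lambda_i}} e_i. \]
First I would verify $Tw = v$: since $Te_i = \lambda_i e_i$, we get $Tw = \sum_{i \in I_+} \frac{a_i}{\sqrt{\lambda_i}} \lambda_i e_i = \sum_{i \in I_+} a_i \sqrt{\lambda_i} e_i = v$, so $v \in Range\ T$. Because $T$ is self-adjoint, $v^* = (Tw)^* = w^* T$ is a linear combination of the rows of $T$, whence
\[ rank\ T = rank\ \begin{bmatrix} T \\ v^* \end{bmatrix}. \]
Next I would check that the same $w$ maps to the augmented column. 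We already have $Tw = v$, and using orthonormality of the eigenvectors together with the normalization hypothesis,
\[ v^* w = \sum_{i \in I_+} \overline{a_i} \sqrt{\lambda_i}\cdot \frac{a_i}{\sqrt{\lambda_i}} = \sum_{i \in I_+} |a_i|^2 = 1. \]
Hence $\begin{bmatrix} T \\ v^* \end{bmatrix} w = \begin{bmatrix} v \\ 1 \end{bmatrix}$, so $\begin{bmatrix} v \\ 1 \end{bmatrix} \in Range\ \begin{bmatrix} T \\ v^* \end{bmatrix}$, and adjoining this column leaves the rank unchanged:
\[ rank\ \begin{bmatrix} T & v \\ v^* & 1 \end{bmatrix} = rank\ \begin{bmatrix} T \\ v^* \end{bmatrix} = rank\ T. \]

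There is no serious obstacle here beyond bookkeeping; the content of the argument is precisely the interplay between the scaling by $1/\sqrt{\lambda_i}$ (which makes $w$ a genuine preimage of $v$ under $T$) and the scaling by $\sqrt{\lambda_i}$ inside $v$ (which, after cancellation, turns $v^* w$ into $\sum_{i \in I_+} |a_i|^2$). The only point requiring a moment's care is the restriction of the sum to $I_+$: dividing by $\sqrt{\lambda_i}$ is legitimate exactly because $\lambda_i > 0$ there, and eigenvectors with $\lambda_i = 0$ contribute nothing to any vector in $Range\ T$, so their exclusion is both necessary and harmless. This is structurally Lemma~\ref{poseigs1} with the two-term sum replaced by the full sum over $I_+$, and completes the forward implication.
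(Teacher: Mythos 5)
Your proof is correct and follows essentially the same route as the paper's: both exhibit the witness $w=\sum_{i\in I_+}\frac{a_i}{\sqrt{\lambda_i}}e_i$, use $Tw=v$ to show adjoining the column $v$ (and row $v^*$) preserves the rank, and then compute $v^*w=\sum_{i\in I_+}|a_i|^2=1$ to place $\begin{bmatrix} v \\ 1 \end{bmatrix}$ in the range of $\begin{bmatrix} T \\ v^* \end{bmatrix}$, exactly as in the paper's extension of Lemma~\ref{poseigs1}. Your explicit remark that $v^*=w^*T$ is a combination of the rows of $T$ is a small extra justification the paper leaves implicit, but the argument is otherwise identical.
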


\begin{proof}
This is just an extension of lemma (\ref{poseigs1}) to an arbitrary number of eigenvectors.  Let $\{a_i\}_{i \in I_+}$ be a collection of scalars such that $\sum_{i \in I_+} |a_i|^2 = 1$.  We first see that $T \left( \sum_{i \in I_+} a_i \frac{1}{\sqrt{\lambda_i}} e_i \right) = \sum_{i \in I_+} a_i {\sqrt{\lambda_i}} e_i$.  This means that $rank\ T = rank\ [T \ \  \sum_{i \in I_+} a_i {\sqrt{\lambda_i}} e_i ] = rank\ \begin{bmatrix} T \\ \left( \sum_{i \in I_+} a_i {\sqrt{\lambda_i}} e_i \right)^* \end{bmatrix}$.\\
We see that $$\begin{bmatrix} T \\ \left( \sum_{i \in I_+} a_i {\sqrt{\lambda_i}} e_i \right)^* \end{bmatrix} \left( \sum_{i \in I_+} a_i \frac{1}{\sqrt{\lambda_i}} e_i \right)$$ 

$$= \begin{bmatrix} T \left( \sum_{i \in I_+} a_i \frac{1}{\sqrt{\lambda_i}} e_i \right) \\ \left\langle \left( \sum_{i \in I_+} a_i {\sqrt{\lambda_i}} e_i \right),  \left( \sum_{i \in I_+} a_i \frac{1}{\sqrt{\lambda_i}} e_i \right) \right\rangle \end{bmatrix} $$

$$= \begin{bmatrix} \left( \sum_{i \in I_+} a_i {\sqrt{\lambda_i}} e_i \right) \\ \sum_{i \in I_+} a_i \overline{a_i} \frac{\sqrt{\lambda_i}}{\sqrt{\lambda_i}} \end{bmatrix} = \begin{bmatrix} \left( \sum_{i \in I_+} a_i {\sqrt{\lambda_i}} e_i \right) \\ 1 \end{bmatrix}$$

This implies that the vector $\begin{bmatrix} \left( \sum_{i \in I_+} a_i {\sqrt{\lambda_i}} e_i \right) \\ 1 \end{bmatrix}$
is within the range of the matrix $\begin{bmatrix} T \\ \left( \sum_{i \in I_+} a_i {\sqrt{\lambda_i}} e_i \right)^* \end{bmatrix}$.  This will give us

$$rank\ \begin{bmatrix} T \\ \left( \sum_{i \in I_+} a_i {\sqrt{\lambda_i}} e_i \right)^* \end{bmatrix} = rank\ \begin{bmatrix} T & \left( \sum_{i \in I_+} a_i {\sqrt{\lambda_i}} e_i \right) \\ \left( \sum_{i \in I_+} a_i {\sqrt{\lambda_i}} e_i \right)^* & 1 \end{bmatrix}$$

The conclusion will follow.
\end{proof}

\subsection{Necessary Lemmas for Converse Proposition}

\begin{observation}
\label{signedspaces}
Let $T$ be a positive semi-definite matrix on $\HH^N$.  By the spectral theorem, $T = \sum_{i=1}^N \lambda_i P_i$, where $P_i$ is a projection onto the eigenvector $e_i$ with the associated real eigenvalue $\lambda_i$. \\ 

We can partition $\HH^N$ into two orthogonal subspaces, $V_{0}$ and $V_+$, where $V_+ = span\ \{e_i :\ \lambda_i > 0, \ 1 \le i \le N \  \}$, and $V_0 = span\ \{e_i :\ \lambda_i = 0, \  1 \le i \le N \}$.\\

Notice the orthogonality of the eigenvectors transfers to these spaces: $\HH^N = V_0 \oplus V_+$.)
\end{observation}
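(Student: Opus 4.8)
The plan is to read this off directly from the spectral theorem, with positive semi-definiteness contributing only the fact that every eigenvalue is a nonnegative real number, so that the two-way split into ``positive'' and ``zero'' eigenspaces genuinely exhausts the spectrum. First I would invoke the spectral theorem for the self-adjoint matrix $T$ to obtain an orthonormal basis $\{e_i\}_{i=1}^N$ of $\HH^N$ consisting of eigenvectors, with $Te_i = \lambda_i e_i$ and each $\lambda_i \ge 0$ (nonnegativity being exactly the hypothesis that $T$ is positive semi-definite). This justifies the representation $T = \sum_{i=1}^N \lambda_i P_i$, where $P_i$ is the orthogonal projection onto $e_i$.

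Next I would partition the index set. Set $I_+ = \{\, i : \lambda_i > 0 \,\}$ and $I_0 = \{\, i : \lambda_i = 0 \,\}$; these are disjoint, and their union is $\{1,\ldots,N\}$ precisely because no eigenvalue is negative. Defining $V_+ = \mathrm{span}\{\, e_i : i \in I_+ \,\}$ and $V_0 = \mathrm{span}\{\, e_i : i \in I_0 \,\}$ then produces the two subspaces named in the statement.

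Finally I would verify the orthogonal direct sum decomposition. Since $\{e_i\}_{i=1}^N$ is orthonormal, any vector in $V_+$ (a linear combination of the $e_i$ with $i \in I_+$) is orthogonal to any vector in $V_0$ (a linear combination of the $e_i$ with $i \in I_0$), so $V_+ \perp V_0$. Because $\{e_i\}_{i=1}^N$ spans $\HH^N$ and $I_+ \cup I_0 = \{1,\ldots,N\}$ with $I_+ \cap I_0 = \emptyset$, every vector of $\HH^N$ decomposes uniquely into its $V_+$-component and its $V_0$-component, giving $\HH^N = V_0 \oplus V_+$.

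There is no genuine obstacle here: the entire content is the orthonormality of the spectral eigenbasis, and the only role of positive semi-definiteness is to rule out negative eigenvalues so that the dichotomy $\lambda_i > 0$ versus $\lambda_i = 0$ covers all indices. Accordingly I would keep the write-up to a few lines, treating it as an immediate consequence of the spectral theorem rather than a result requiring machinery.
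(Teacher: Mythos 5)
Your proposal is correct and matches the paper's treatment: the paper states this as an unproved observation precisely because its entire content is the spectral theorem plus the fact that positive semi-definiteness forces every eigenvalue to be $\ge 0$, so the index sets $I_+$ and $I_0$ exhaust $\{1,\ldots,N\}$ and the orthonormal eigenbasis yields $\HH^N = V_0 \oplus V_+$ immediately. Your few-line write-up is exactly the level of detail the result warrants.
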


\begin{lemma}
\label{nulllemma}
Let $T$ be a positive-semi-definite matrix on $\HH^N$, and let $V_0$ be as in observation (\ref{signedspaces}).  Let $v \in \HH^N$.\\

If $P_{V_0} v \neq 0$, then $rank\ \begin{bmatrix} T & v \\ v^* & 1 \end{bmatrix} > rank\ T$.
\end{lemma}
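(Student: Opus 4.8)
The plan is to show that merely appending the column $v$ to $T$ already raises the rank, and that passing to the full bordered matrix can only preserve or increase this. Throughout I work with the orthogonal decomposition $\HH^N = V_0 \oplus V_+$ from Observation~\ref{signedspaces}, recalling that the spectral representation $T = \sum_{i=1}^N \lambda_i P_i$ gives $\mathrm{Range}\,T = V_+$: a positive semi-definite matrix has range equal to the span of those eigenvectors whose eigenvalues are strictly positive.

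First I would record the elementary rank fact that deleting a row of a matrix cannot increase its rank, so that
\[ \mathrm{rank}\begin{bmatrix} T & v \\ v^* & 1 \end{bmatrix} \ge \mathrm{rank}\begin{bmatrix} T & v \end{bmatrix}, \]
the right-hand matrix being the top $N$ rows of the bordered matrix. Thus it suffices to prove the strict inequality $\mathrm{rank}\,[\,T\ v\,] > \mathrm{rank}\,T$ for this $N\times(N+1)$ matrix.

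The crux is then to observe that the hypothesis $P_{V_0} v \neq 0$ forces $v \notin \mathrm{Range}\,T$. Writing $v = P_{V_+}v + P_{V_0}v$ and using $V_0 \perp V_+$, the component $P_{V_0}v$ lies in $V_0 \setminus \{0\}$, which meets $V_+ = \mathrm{Range}\,T$ only in $0$; hence $v$ cannot lie in $V_+$. Consequently the column space of $[\,T\ v\,]$ strictly contains $\mathrm{Col}\,T$, so its dimension is $\mathrm{rank}\,T + 1$. Combining this with the row-deletion bound above yields
\[ \mathrm{rank}\begin{bmatrix} T & v \\ v^* & 1 \end{bmatrix} \ge \mathrm{rank}\,T + 1 > \mathrm{rank}\,T, \]
as desired.

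There is no serious obstacle here; the only point requiring genuine care is the identification $\mathrm{Range}\,T = V_+$, which is exactly where positive semi-definiteness enters. For a general matrix the range need not be orthogonal to the kernel, so the step in which $P_{V_0}v \neq 0$ pushes $v$ off the range would break down. The self-adjointness packaged into Observation~\ref{signedspaces} is precisely what makes the orthogonal splitting, and hence the whole argument, valid.
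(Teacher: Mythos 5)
Your proof is correct and takes essentially the same route as the paper's: both use the orthogonal splitting $\HH^N = V_0 \oplus V_+$ with $\mathrm{Range}\,T = V_+$ to conclude from $P_{V_0}v \neq 0$ that $v \notin \mathrm{Range}\,T$, hence $\mathrm{rank}\,[\,T\ \ v\,] > \mathrm{rank}\,T$, and then pass to the bordered matrix via the inequality $\mathrm{rank}\begin{bmatrix} T & v \\ v^* & 1 \end{bmatrix} \ge \mathrm{rank}\,[\,T\ \ v\,]$. The only difference is expository: you make the row-deletion bound and the identification of where semi-definiteness is used explicit, which the paper leaves implicit.
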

\begin{proof}
Since $\HH^N = V_0 \oplus V_+$, we have that $V_0^\perp = V_+$.\\

We note that $ker\ T = V_0$, and $Range\ T = V_+$.  If $v \in \HH^N$, then $v = P_{V_+ } v + P_{V_0} v$;  if $P_{V_0} v \neq 0$, then $P_{V_0} v \notin Range\ T$ and hence $v \notin Range\ T$.  It follows that $rank\ [T \ \ v] > rank\ T$, and that $rank\ \begin{bmatrix} T & v \\ v^* & 1 \end{bmatrix} \ge rank\ [T \ \ v] > rank\ T$.
\end{proof}

\begin{lemma}
\label{negativelemma}
Let $T$ be a positive semi-definite matrix on $\HH^N$, and let $V_0$ be as in observation (\ref{signedspaces}).  Let $v \in \HH^N$.

If $P_{V_-} v \neq 0$, then $rank\ \begin{bmatrix} T & v \\ v^* & 1 \end{bmatrix} > rank\ T$.
\end{lemma}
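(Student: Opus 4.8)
The plan is to mirror the proof of Lemma~\ref{nulllemma} but push the rank comparison one step further, reducing everything to a single Schur-complement scalar and then reading off the inertia. First I would dispose of the null-space contribution: by Lemma~\ref{nulllemma} we may assume $P_{V_0}v=0$, since otherwise $\operatorname{rk}M>\operatorname{rk}T$ already, where $M=\begin{bmatrix} T & v \\ v^* & 1\end{bmatrix}$. Thus $v\in V_+=\mathrm{Range}\,T$ and, as in Proposition~\ref{forwardsprop}, we may write $v=\sum_{i\in I_+}a_i\sqrt{\lambda_i}\,e_i$. Setting $w=\sum_{i\in I_+}(a_i/\sqrt{\lambda_i})e_i$ gives $Tw=v$ (equivalently $w^*T=v^*$), so that conjugating $M$ by $P=\begin{bmatrix} I & -w \\ 0 & 1\end{bmatrix}$ yields the block-diagonal congruent matrix $P^*MP=\begin{bmatrix} T & 0 \\ 0 & 1-v^*w\end{bmatrix}$, where a direct computation in the $\{e_i\}$ coordinates gives $v^*w=\sum_{i\in I_+}|a_i|^2$.

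From here Sylvester's law of inertia does the rest: the inertia of $M$ equals the inertia of $T$ plus the inertia of the scalar $1-\sum_{i\in I_+}|a_i|^2$. Hence $\operatorname{rk}M=\operatorname{rk}T$ exactly when $\sum_{i\in I_+}|a_i|^2=1$, and $\operatorname{rk}M=\operatorname{rk}T+1$ otherwise, the extra eigenvalue of $M$ carrying the sign of $1-\sum|a_i|^2$. The ``negative'' regime is precisely $\sum_{i\in I_+}|a_i|^2>1$: there the Schur complement $1-v^*w$ is strictly negative, so $M$ acquires one negative eigenvalue while retaining all $\operatorname{rk}T$ positive eigenvalues of $T$, giving $\operatorname{rk}M=\operatorname{rk}T+1>\operatorname{rk}T$ as claimed. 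This is exactly the complement, within $V_+$, of the equality case established in Proposition~\ref{forwardsprop}, and together with Lemma~\ref{nulllemma} it supplies the contrapositive needed for the converse of Theorem~\ref{samerankfamily1}.

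I expect the main obstacle to be bookkeeping in the singular case rather than any genuine difficulty: since $T$ need not be invertible, $w$ should be taken as $T^+v$ (the pseudoinverse restricted to $V_+$), and the congruence must be carried out on $\HH^{N+1}=\HH^N\oplus\HH$ so that the zero eigenspace $V_0$ is transported unchanged and does not corrupt the sign count. I would also flag one point of notation: because $T$ is positive semi-definite its strictly-negative eigenspace is $\{0\}$, so read literally the hypothesis $P_{V_-}v\neq 0$ is vacuous; the substantive content the lemma is meant to encode is the negative-Schur-complement case $\sum_{i\in I_+}|a_i|^2>1$ described above, which I would state explicitly to avoid ambiguity. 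With that clarification the proof is entirely parallel to Lemma~\ref{nulllemma} and requires no new machinery beyond the one congruence and Sylvester's law of inertia.
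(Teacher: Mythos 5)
Your argument is correct, and it takes a genuinely different route from the paper's. The paper proves this lemma by a preimage computation: it picks an eigenvector $e_i$ with $\lambda_i<0$ and $c_i=\langle v,e_i\rangle\neq 0$, notes that any solution $w$ of $Tw=c_ie_i$ has the form $(c_i/\lambda_i)e_i+\nu$ with $\nu\in \mathrm{Null}\, T$, and reads off the contradiction $|c_i|^2/\lambda_i=1$ from the last coordinate of the bordered system --- an argument that only makes sense for a Hermitian $T$ with genuinely negative spectrum. You are right that under the stated hypothesis ($T$ positive semi-definite) one has $V_-=\{0\}$, so the lemma as written is vacuous; it is a vestige of a more general self-adjoint version, and Observation~\ref{signedspaces} indeed never defines $V_-$. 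Your repair --- reduce to $v\in \mathrm{Range}\, T$ via Lemma~\ref{nulllemma}, set $w=T^+v$, and pass from $M=\begin{bmatrix} T & v \\ v^* & 1\end{bmatrix}$ to the congruent matrix $\diag(T,\,1-v^*w)$ with $v^*w=\sum_{i\in I_+}|a_i|^2$ --- is clean and strictly stronger: Sylvester's law of inertia gives at once that $\rk M=\rk T$ if and only if $v\in \mathrm{Range}\, T$ and $v^*T^+v=1$, with $\rk M=\rk T+1$ otherwise, so one computation subsumes Proposition~\ref{forwardsprop}, Lemma~\ref{coefflemma}, and Proposition~\ref{backwardsprop}. (Note, though, that the regime $\sum_{i\in I_+}|a_i|^2\neq 1$ which you assign to this lemma is, in the paper's architecture, the job of Lemma~\ref{coefflemma}; the present lemma was meant to exclude negative-eigenvalue components, a case that cannot arise here.)

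One further point your Schur complement exposes: the Hermitian statement the paper's proof gestures at is actually false, because mixed-sign contributions $|c_i|^2/\lambda_i$ can sum to $1$. For instance, with $T=\diag(2,-1)$ and $v=(2,1)^*$ one gets $v^*T^+v=2-1=1$, so $\rk \begin{bmatrix} T & v \\ v^* & 1 \end{bmatrix}=2=\rk T$ even though $P_{V_-}v\neq 0$; the unjustified step in the paper's proof is the passage from the border $P_{V_-}v$ to the border $v$. For positive semi-definite $T$ nothing is lost, since all contributions have one sign --- which is exactly what your inertia argument makes transparent.
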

\begin{proof}
  Since $P_{V_-} v \neq 0$, there must be some $e_i$, $\lambda_i < 0$, such that $c_i = \langle v, e_i \rangle \neq 0$.  Let us first consider only the vector $c_i e_i$.  $c_i e_i$ is in the range of $T$;  its preimage is $\{ (c_i/\lambda_i) e_i + \nu : \nu \in Null\ T\}$.  So we have that $rank\ \begin{bmatrix} T \\ (c_i e_i)^* \end{bmatrix} = rank\ T$.\\  
        
        We know $\begin{bmatrix} c_i e_i \\ 1 \end{bmatrix}$ is in the range of $\begin{bmatrix} T \\ (c_i e_i)^* \end{bmatrix}$.  We proceed by contradiction.  We know that any solution $w$ for the following equation:
        
$$ \begin{bmatrix} T \\ (c_i e_i)^* \end{bmatrix} (w) = \begin{bmatrix} T (w) \\ (c_i e_i)^* \, w \end{bmatrix} = \begin{bmatrix} c_i e_i\\ \langle w, c_i e_i \rangle \end{bmatrix}$$

is of the form $w = (c_i/\lambda_i) e_i + \nu$ for some $\nu \in Null\ T$.  Yet, we see that in the $N+1^{th}$ slot in the above vector, we have $\langle (c_i/\lambda_i) e_i + \nu, c_i e_i \rangle = |c_i|^2 / \lambda_i = 1$, i.e., $|c_i|^2 = \lambda_i < 0$.  This is a contradiction.

This will suffice to show that for any eigenvector $e_i$ with negative eigenvalue, if we let $P_i$ be the one dimensional projection onto this vector and if $P_i v \neq 0$, then $rank\ \begin{bmatrix} T & P_i v \\ (P_i v)^* & 1 \end{bmatrix} > rank\ T$.  It follows that if $P_{V_-} v \neq 0$, then $rank\ \begin{bmatrix} T & P_{V_-} v \\ (P_{V_-} v)^* & 1 \end{bmatrix} > rank\ T$.  We extend this idea:

The preimage of $P_{V_-} v = \sum_{i \in I_-} c_i e_i$ with regards to $T$ is 
$T^{-1} (P_{V_-} v) = \{ \sum_{i \in I_-} \frac{c_i}{\lambda_i} e_i + \nu \ : \ \nu \in Null\ T \}$.  Thus, for $\begin{bmatrix} P_{V_-} v \\ 1 \end{bmatrix}$ to be in the range of $\begin{bmatrix} T \\ (P_{V_-} v)^* \end{bmatrix}$, we must have 
\begin{align*}
\begin{bmatrix} T \\ (P_{V_-} v)^* \end{bmatrix} ( \sum_{i \in I_-} \frac{c_i}{\lambda_i} e_i + \nu )& = \begin{bmatrix} T(\sum_{i \in I_-} \frac{c_i}{\lambda_i} e_i + \nu )  \\ (P_{V_-} v)^* (\sum_{i \in I_-} \frac{c_i}{\lambda_i} e_i + \nu ) \end{bmatrix}\\
&= \begin{bmatrix} \sum_{i \in I_-} c_i e_i \\ \sum_{i \in I_-} |c_i|^2 / \lambda_i  \end{bmatrix} = \begin{bmatrix} P_{V_-} v \\ 1 \end{bmatrix}
\end{align*}
but this would mean that $|c_i|^2 / \lambda_i = 1$, when $|c_i|^2 / \lambda_i$ is a negative number.

\end{proof}

\begin{lemma}
\label{coefflemma}
With the notation above, 
let $\{a_i\}_{i \in I_+}$ where $I_+$ is the index of eigenvectors with positive eigenvalues.  Let 

\begin{equation}\label{aform} v = \sum_{i \in I_+} a_i \sqrt{\lambda_i} e_i.\end{equation}  Assume that

$$rank\ \begin{bmatrix} T & v \\ v^* & 1 \end{bmatrix} = rank\ T$$

then $\sum_{i \in I_+} |a_i|^2 = 1.$
\end{lemma}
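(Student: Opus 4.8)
The plan is to convert the rank hypothesis into a single scalar equation involving a preimage of $v$ under $T$, and then to evaluate that equation using the prescribed expansion of $v$. Concretely, I want to produce a vector $w$ satisfying $Tw=v$ and $v^*w=1$, and show that the second condition reads exactly $\sum_{i\in I_+}|a_i|^2=1$.

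First I would observe that, because $v=\sum_{i\in I_+}a_i\sqrt{\lambda_i}e_i$ already lies in $V_+=\mathrm{Range}(T)$, the bottom row $v^*$ lies in the row space of the Hermitian matrix $T$: if $v=Tu$ then $v^*=u^*T^*=u^*T$. Hence appending $v^*$ does not change the rank, giving $\mathrm{rank}\begin{bmatrix} T \\ v^* \end{bmatrix}=\mathrm{rank}\,T$. Combining this with the hypothesis $\mathrm{rank}\begin{bmatrix} T & v \\ v^* & 1 \end{bmatrix}=\mathrm{rank}\,T$ forces the appended column $\begin{bmatrix} v \\ 1 \end{bmatrix}$ to lie in the range of $\begin{bmatrix} T \\ v^* \end{bmatrix}$; otherwise the rank would strictly increase. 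Therefore there exists $w$ with
\[
\begin{bmatrix} T \\ v^* \end{bmatrix}w=\begin{bmatrix} v \\ 1 \end{bmatrix},
\qquad\text{i.e.}\qquad Tw=v \ \text{ and }\ v^*w=1.
\]

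Next I would solve $Tw=v$ explicitly. Using the spectral representation $T=\sum_i\lambda_iP_i$ from Observation~\ref{signedspaces} and the form of $v$, the general solution is $w=\sum_{i\in I_+}\frac{a_i}{\sqrt{\lambda_i}}e_i+\nu$ with $\nu\in\mathrm{Null}(T)=V_0$. Finally I would compute the second equation $v^*w=\langle w,v\rangle$. Since $\nu\perp V_+$ by the orthogonal decomposition $\HH^N=V_0\oplus V_+$ and the $e_i$ are orthonormal, the $V_0$-part contributes nothing, and using that each $\lambda_i>0$ is real,
\[
\langle w,v\rangle=\sum_{i\in I_+}\frac{a_i}{\sqrt{\lambda_i}}\,\overline{a_i\sqrt{\lambda_i}}=\sum_{i\in I_+}|a_i|^2.
\]
Setting this equal to $1$ yields the conclusion.

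The calculation itself is routine; the one genuinely load-bearing step is the passage from the two rank equalities to the membership $\begin{bmatrix} v \\ 1 \end{bmatrix}\in\mathrm{Range}\begin{bmatrix} T \\ v^* \end{bmatrix}$, which is the favorable-direction analogue of the contradiction arguments run in Lemmas~\ref{nulllemma} and~\ref{negativelemma}. The only other point requiring care is verifying that the null-space component $\nu$ of $w$ truly drops out of $\langle w,v\rangle$, which is exactly the orthogonality $V_0\perp V_+$ recorded in Observation~\ref{signedspaces}; this is what makes the final sum depend only on the coefficients $a_i$.
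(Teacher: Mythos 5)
Your proof is correct and follows essentially the same route as the paper's: both use that $v\in\mathrm{Range}(T)$ to get $\mathrm{rank}\begin{bmatrix} T \\ v^* \end{bmatrix}=\mathrm{rank}\,T$, deduce from the rank hypothesis that $\begin{bmatrix} v \\ 1 \end{bmatrix}$ lies in the range of $\begin{bmatrix} T \\ v^* \end{bmatrix}$, parametrize the preimage of $v$ as $\sum_{i\in I_+}\frac{a_i}{\sqrt{\lambda_i}}e_i+\nu$ with $\nu\in\mathrm{Null}(T)$, and read off $\sum_{i\in I_+}|a_i|^2=1$ from the bottom entry. If anything, you are slightly more explicit than the paper at the two load-bearing steps (the Hermitian-transpose argument for the row, and why the rank equality forces the column membership), which the paper leaves implicit.
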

\begin{proof}

Let $v$ be of the form as in (\ref{aform}).  We assume that $rank\ \begin{bmatrix} T & v \\ v^* & 1 \end{bmatrix} = rank\ T$.\\

Then $v$ is in the range of $T$, so $\begin{bmatrix} T \\ v^* \end{bmatrix}$ is of the same rank as $T$.  The preimage of $v$ is $T^{-1} (v) = \{ \sum_{i \in I_+} \frac{a_i}{\sqrt{\lambda_i}} e_i + \nu \ :\ \nu \in Null\ T\}$.  If we let $\nu$ be arbitrary, then 

\begin{align*}
\begin{bmatrix} T \\ v^* \end{bmatrix} ( \sum_{i \in I_+} \frac{a_i}{\sqrt{\lambda_i}} e_i + \nu)& = \begin{bmatrix} T( \sum_{i \in I_+} \frac{a_i}{\sqrt{\lambda_i}} e_i + \nu)  \\ \langle ( \sum_{i \in I_+} \frac{a_i}{\sqrt{\lambda_i}} e_i + \nu), ( \sum_{i \in I_+} {a_i}{\sqrt{\lambda_i}} e_i ) \rangle \end{bmatrix}\\& = \begin{bmatrix} v \\ \sum_{i \in I_+} |a_i|^2 \end{bmatrix}
\end{align*}

This will force $\sum_{i \in I_+} |a_i|^2 = 1$.
\end{proof}

\begin{corollary}
Let $e_i$ be an eigenvector with positive eigenvalue.  Then $rank\ \begin{bmatrix} T & c e_i \\ (c e_i) ^* & 1 \end{bmatrix} = rank\ T$
if and only if $|c| = \sqrt{\lambda_i}$.
\end{corollary}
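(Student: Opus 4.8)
The plan is to obtain this corollary as the single-eigenvector specialization of Theorem~\ref{samerankfamily1}, splitting into the two implications exactly as that theorem does. Since everything is already in place, I expect the argument to be short; the only care needed is in extracting a scalar condition from the vector representation, which I would do by appealing to orthogonality of the eigenvectors (or, equivalently, by invoking Lemma~\ref{coefflemma} directly).

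For the ``if'' direction I would assume $|c| = \sqrt{\lambda_i}$ and set $a_i = c/\sqrt{\lambda_i}$, so that $|a_i| = 1$ and $c e_i = a_i \sqrt{\lambda_i} e_i$. This is precisely a one-term instance of the representation $v = \sum_{j \in I_+} a_j \sqrt{\lambda_j} e_j$ appearing in Proposition~\ref{forwardsprop}, with the normalization $\sum_{j} |a_j|^2 = |a_i|^2 = 1$ satisfied. Hence the forwards proposition immediately gives $\rk \begin{bmatrix} T & c e_i \\ (c e_i)^* & 1 \end{bmatrix} = \rk T$.

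For the ``only if'' direction I would assume the rank equality holds and apply the converse half of Theorem~\ref{samerankfamily1}: it yields scalars $\{a_j\}_{j \in I_+}$ with $\sum_{j \in I_+} |a_j|^2 = 1$ and $c e_i = \sum_{j \in I_+} a_j \sqrt{\lambda_j} e_j$. Because the $\{e_j\}$ are orthogonal, I would compare coefficients: for $j \neq i$ the left side has no $e_j$-component, so $a_j \sqrt{\lambda_j} = 0$, and since $\lambda_j > 0$ this forces $a_j = 0$; matching the $e_i$-component gives $a_i \sqrt{\lambda_i} = c$. The normalization then reduces to $|a_i|^2 = 1$, whence $|c| = |a_i|\sqrt{\lambda_i} = \sqrt{\lambda_i}$, as desired. (Alternatively, one can feed $v = c e_i = a_i\sqrt{\lambda_i}e_i$ straight into Lemma~\ref{coefflemma} to get $|a_i|^2 = |c|^2/\lambda_i = 1$.)

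There is essentially no genuine obstacle here, since the corollary is a direct specialization of the machinery already developed; the only point requiring a sentence of justification is why the other coefficients vanish, and that follows from the orthogonality of the eigenvectors in the spectral decomposition of $T$.
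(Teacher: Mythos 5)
Your proposal is correct and takes essentially the same route as the paper, which likewise obtains the corollary as the one-eigenvector specialization of the developed machinery: the ``if'' direction from the forwards result and the ``only if'' direction via Lemma~\ref{coefflemma} (your parenthetical alternative is verbatim the paper's proof, which says to apply that lemma after writing $ce_i = a_i\sqrt{\lambda_i}e_i$). If anything, you are slightly more careful than the paper's terse proof, since taking $a_i = c/\sqrt{\lambda_i}$ correctly handles a complex phase of $c$, whereas the paper's instruction ``apply the lemma with $a_i=1$'' literally covers only $c=\sqrt{\lambda_i}$.
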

\begin{proof}
(``$\Leftarrow$")  This is shown in the prior section.\\\\
(``$\Rightarrow$")  Apply lemma (\ref{coefflemma}) with $a_i = 1$, and $a_j = 0, \ $ for $j \neq i$.
\end{proof}

\begin{proposition}
\label{backwardsprop}
Let $v$ be a vector such that $rank\ \begin{bmatrix} T & v \\ v^* & 1 \end{bmatrix} = rank\ T$ for a positive semi-definite matrix $T$.  Let $\{e_i\}_{i=1}^N$ be the eigenvectors for $T$ with associated eigenvalues $\{ \lambda_i \}_{i=1}^N$.  We use $I_+ \subset \{1, \ldots, N\}$ as the index of the positive eigenvalues, i.e., $\lambda_i > 0 \Leftrightarrow i \in I_+$.\\

Let $v \in \HH^N$.  If

$$rank\ \begin{bmatrix} T & v \\ v^* & 1 \end{bmatrix} = rank\ T$$

then $v \in span_{i \in I_+} e_i$, where $v = \sum_{i \in I_+} a_i \sqrt{\lambda_i} e_i$ for some collection of scalars $\{a_i\}_{i \in I_+}$ such that $\sum_{i \in I_+} | a_i |^2 = 1$.
\end{proposition}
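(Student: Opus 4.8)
The plan is to assemble the proposition directly from the three preceding lemmas; the converse statement is essentially their logical conjunction, packaged through contrapositives. First I would invoke Lemma~\ref{nulllemma}: we are given $\rk\begin{bmatrix} T & v \\ v^* & 1\end{bmatrix} = \rk T$, so the rank does \emph{not} strictly increase, and the contrapositive of that lemma forces $P_{V_0} v = 0$. Because $\HH^N = V_0 \oplus V_+$ with $\ker T = V_0$ and $\mathrm{Range}\,T = V_+$, this immediately places $v \in V_+ = \mathrm{span}_{i \in I_+} e_i$, which is the first half of the conclusion. Since $T$ is positive semi-definite there is no negative eigenspace, so Lemma~\ref{negativelemma} is vacuously satisfied here; it is the analogous tool one would need to exclude negative-eigenvalue components were $T$ merely self-adjoint rather than positive semi-definite.

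Next I would fix the normalization. Writing $v = \sum_{i \in I_+} c_i e_i$ in the eigenbasis and setting $a_i := c_i / \sqrt{\lambda_i}$ — legitimate precisely because $\lambda_i > 0$ for every $i \in I_+$ — produces the required representation $v = \sum_{i \in I_+} a_i \sqrt{\lambda_i} e_i$. At this point $v$ has exactly the form assumed in the hypothesis of Lemma~\ref{coefflemma}, namely Equation~\eqref{aform}, and the rank hypothesis $\rk\begin{bmatrix} T & v \\ v^* & 1\end{bmatrix} = \rk T$ is still in force, so that lemma delivers $\sum_{i \in I_+} |a_i|^2 = 1$ with no further computation. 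Combining the two steps gives both the span membership and the normalization, completing the proof.

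I do not expect a substantial obstacle, since all of the genuine linear-algebra content has already been isolated into Lemmas~\ref{nulllemma} and~\ref{coefflemma}. The only point requiring care is bookkeeping: one must write the vector extracted by the range argument in exactly the $a_i\sqrt{\lambda_i}$ parametrization that Lemma~\ref{coefflemma} consumes, so that the constraint $\sum_{i \in I_+}|a_i|^2 = 1$ emerges directly. For that reason I would keep the two steps in this order — membership in $V_+$ first, normalization second — because the coefficient lemma presupposes precisely the representation furnished by the range argument in the first step.
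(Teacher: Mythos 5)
Your proof is correct and takes essentially the same route as the paper's own: Lemma~\ref{nulllemma} forces $P_{V_0}v=0$ and hence $v\in V_+=\mathrm{span}_{i\in I_+}\,e_i$, and Lemma~\ref{coefflemma} then yields $\sum_{i\in I_+}|a_i|^2=1$. Your two refinements are both sound and slightly cleaner than the original: the paper cites Lemma~\ref{negativelemma} even though, as you note, it is vacuous for positive semi-definite $T$, and your explicit reparametrization $a_i=c_i/\sqrt{\lambda_i}$ (valid since $\lambda_i>0$ on $I_+$) spells out a step the paper leaves implicit before applying the coefficient lemma.
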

\begin{proof}
We start with the assumption $rank\ \begin{bmatrix} T & v \\ v^* & 1 \end{bmatrix} = rank\ T$.  By lemmas (\ref{nulllemma}) and (\ref{negativelemma}), we have $v \in span_{i \in I_+} e_i$.  By lemma (\ref{coefflemma}), we have the conclusion.
\end{proof}

\subsection{Proof of theorem \ref{samerankfamily1}}
\begin{proof}
(``$\Rightarrow$")  This is shown by proposition (\ref{forwardsprop}).\\\\
(``$\Leftarrow$")  This is shown by proposition (\ref{backwardsprop}).\\
\end{proof}


\begin{appendix}

\section*{Appendix: Alternative Proof of Corollary~\ref{riesz_riesz}}\label{brians_argument}

In this section we give a direct proof of Corollary~\ref{riesz_riesz}. Before proving the main result of this section, we need a computational lemma.

\begin{lemma*}
Given operators $S=(b_{ij})_{i,j=1}^N$ and  $T=(a_{ij})_{i,j=1}^N$ on $\HH^N$ we have
\[ \langle T,S \rangle_F = \sum_{i,j=1}^Na_{ij}b_{ij},\]
Moreover, 
\[ \|S\|^2_F = \sum_{i,j=1}^N a_{ij}^2 = \sum_{i=1}^N \|R_i\|^2 = \sum_{i=1}^N\|C_i\|^2,\]
where $R_i$ (resp. $C_i$) is the $i^{th}$-row vector of $S$ (resp. $i^{th}$-column vector of $S$).
\end{lemma*}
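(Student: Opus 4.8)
The plan is to reduce everything to the definition of the Frobenius inner product already recorded above, $\ip{S}{T}_F = \mathrm{Tr}(S^*T)$, and then expand the trace entrywise; no auxiliary machinery is required.

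First I would establish the inner-product formula. Writing out the diagonal of $T^*S$, its $k$-th entry is $\sum_{j} \overline{a_{jk}}\, b_{jk}$, so summing over $k$ gives $\ip{T}{S}_F = \mathrm{Tr}(T^*S) = \sum_{j,k} \overline{a_{jk}}\, b_{jk}$; relabelling the indices yields the claimed $\sum_{i,j} a_{ij} b_{ij}$, the conjugation being invisible in the real setting that is relevant to Corollary~\ref{riesz_riesz} and consistent with the conjugate-free formula for $\ip{S}{T}_F$ stated in the definition.

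For the ``moreover'' part I would specialize the identity to $T=S$, obtaining $\norm{S}_F^2 = \ip{S}{S}_F = \sum_{i,j} b_{ij}^2$. The two remaining equalities are then purely a matter of choosing the order of summation: grouping the double sum by rows gives $\sum_{i,j} b_{ij}^2 = \sum_i \left( \sum_j b_{ij}^2 \right) = \sum_i \norm{R_i}^2$, while grouping by columns gives $\sum_j \norm{C_j}^2$, where $R_i$ and $C_j$ are the $i$-th row and $j$-th column of $S$.

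Since each step merely unwinds a definition, there is no genuine obstacle here; the only points demanding care are keeping the conjugation straight when passing from $\mathrm{Tr}(S^*T)$ to an entrywise sum (trivial over $\RR$ but producing $|b_{ij}|^2$ over $\CC$), and noting the evident typo in the statement, where the squared Frobenius norm of $S$ is written with the entries $a_{ij}$ of $T$ rather than the entries $b_{ij}$ of $S$.
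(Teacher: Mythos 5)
Your proposal is correct and follows essentially the same route as the paper: expand the diagonal of the matrix product defining $\mathrm{Tr}(S^*T)$ to get the entrywise double sum, then specialize to $T=S$ and regroup by rows or columns for the ``moreover'' part. You also rightly flag the statement's typo (writing $a_{ij}$ for the entries of $S$ in the norm identity, a slip the paper's own proof repeats) and the suppressed conjugation, which is harmless in the real setting the paper uses but should read $\overline{b_{ij}}a_{ij}$ over $\CC$.
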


\begin{proof}
Note that 
\begin{eqnarray*}\mathrm{Tr} ( S^*T)&=& \mathrm{Tr}\left(
\begin{bmatrix} 
b_{11}&b_{21}& \cdots & b_{N1}\\
b_{12}&b_{22}&\cdots & b_{N2}\\
\vdots&\vdots&\cdots&\vdots\\
b_{1N}&b_{2N}&\cdots & b_{NN}
\end{bmatrix}
\begin{bmatrix}
a_{11}&a_{12}&\cdots &a_{1N}\\
a_{21}&a_{22}&\cdots &a_{2N}\\
\vdots & \vdots & \cdots & \vdots\\
a_{N1}&a_{N2}&\cdots &a_{NN}
\end{bmatrix}\right)\\
&=&\mathrm{Tr}
\begin{bmatrix}
\sum_{i=1}^Nb_{i1}a_{i1}& * & \cdots & *\\
*& \sum_{i=1}^N b_{i2}a_{i2} & \cdots &*\\
\vdots & \vdots & \ddots &  \vdots\\
* & * & \cdots & \sum_{=1}^Nb_{iN}a_{iN}
\end{bmatrix}\\
&=& \sum_{i,j=1}^N a_{ij}b_{ij}.
\end{eqnarray*}

For the moreover part, we have $S^*S = (a_{ji})(a_{ij})$ has diagonal elements $\sum_{j=1}^Na_{ij}^2$ for $i=1,2,\ldots,N$.
\end{proof}

\begin{theorem*}\label{unit_norm_riesz}
Let $\{\phi_i\}_{i=1}^N$ be a unit norm Riesz sequence
 in $\HH^N$ with Riesz bounds $A,B$.  Then $\{\phi_i\phi_i^*\}_{i=1}^N$ has Riesz bounds $A,B$.
\end{theorem*}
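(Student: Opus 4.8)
The plan is to compute $\left\|\sum_{i=1}^N a_i\phi_i\phi_i^*\right\|_F^2$ head-on, using the column-sum identity from the computational lemma above instead of the Hadamard machinery of Theorem~\ref{gram}. Write $\Psi=\sum_{i=1}^N a_i\phi_i\phi_i^*$. The key observation is that the $k$-th column of the outer product $\phi_i\phi_i^*$ is exactly $\overline{\phi_i(k)}\,\phi_i$, since the $(m,k)$ entry of $\phi_i\phi_i^*$ is $\phi_i(m)\overline{\phi_i(k)}$. Hence the $k$-th column of $\Psi$ is $C_k=\sum_{i=1}^N a_i\overline{\phi_i(k)}\,\phi_i$, and the ``moreover'' part of the lemma gives
\[
\left\|\Psi\right\|_F^2=\sum_{k=1}^N\|C_k\|^2=\sum_{k=1}^N\left\|\sum_{i=1}^N a_i\overline{\phi_i(k)}\,\phi_i\right\|^2.
\]

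Next I would apply the Riesz bounds of the original sequence columnwise. For each fixed $k$, the scalars $b_i:=a_i\overline{\phi_i(k)}$ feed into the defining Riesz inequality for $\{\phi_i\}_{i=1}^N$, yielding
\[
A\sum_{i=1}^N|a_i|^2|\phi_i(k)|^2\le\left\|\sum_{i=1}^N a_i\overline{\phi_i(k)}\,\phi_i\right\|^2\le B\sum_{i=1}^N|a_i|^2|\phi_i(k)|^2.
\]
Summing these over $k=1,\dots,N$ and interchanging the order of summation reduces the flanking terms to $\sum_{i=1}^N|a_i|^2\sum_{k=1}^N|\phi_i(k)|^2$. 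Here the unit norm hypothesis enters decisively: $\sum_{k=1}^N|\phi_i(k)|^2=\|\phi_i\|^2=1$, so both flanking terms collapse to $\sum_{i=1}^N|a_i|^2$, producing $A\sum_i|a_i|^2\le\|\Psi\|_F^2\le B\sum_i|a_i|^2$, which is precisely the claim.

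I do not expect a serious obstacle; the argument is genuinely elementary once the column decomposition is in hand. The only points demanding care are the bookkeeping of the complex conjugates (so that the columnwise coefficients are $a_i\overline{\phi_i(k)}$ and their squared moduli are $|a_i|^2|\phi_i(k)|^2$, independently of the conjugation), and recognizing that it is precisely the unit-norm normalization that makes the bounds transfer with equality rather than being rescaled by the weights $\|\phi_i\|^2$. This last point also illuminates, far more transparently than the Hadamard-product proof, \emph{why} the Riesz bounds are preserved exactly.
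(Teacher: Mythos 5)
Your proof is correct and is essentially the paper's own argument (given in the Appendix): the paper decomposes $\left\|\sum_i a_i\phi_i\phi_i^*\right\|_F^2$ into row norms $R_k=\sum_i a_i\phi_i(k)\phi_i$ rather than your column norms $C_k=\sum_i a_i\overline{\phi_i(k)}\,\phi_i$, which is an immaterial difference since the matrix is self-adjoint, and then applies the Riesz inequality slotwise, interchanges the sums, and invokes $\|\phi_i\|=1$ exactly as you do. If anything, your version is slightly more careful with the complex conjugates than the paper's, which works the lower bound and leaves the upper bound as ``done similarly.''
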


\begin{proof}
Given scalars $(a_i)_{i=1}^N$, we have that the $(i,j)$-entry of 
\[S=\sum_{i=1}^Na_i\phi_i\phi_i^*,\]
is
\[ \sum_{k=1}^Nc_k\phi_k(i)\phi_k(j).\] 
So the $i^{th}$-row vector is
\[ R_i=\sum_{k=1}^Nc_k\phi_k(i)\phi_k.\]
So  by our lemmas,
\begin{eqnarray*}
\|S\|^2 &=&\sum_{i=1}^N \|R_i\|^2\\
&=&\sum_{i=1}^N\| \sum_{k=1}^Nc_k\phi_k(i)\phi_k\|^2\\
&\ge& A\sum_{i=1}^N\sum_{k=1}^N|c_k|^2|\phi_k(i)|^2\\
&=&A \sum_{k=1}^N |c_k|^2 \sum_{i=1}^N|\phi_k(i)|^2\\
&=&A \sum_{k=1}^N|c_k|^2.
\end{eqnarray*}
The upper bound is done similarly.
\end{proof}

{\noindent \textbf{THANKS:}}
The authors wish to thank Janet C. Tremain for her helpful comments on this paper.

\end{appendix}
\bibliographystyle{plain}
\bibliography{Riesz_OP_arxiv}

\end{document}